\documentclass[11pt]{article}
\usepackage{amssymb,amsmath,fullpage}
\usepackage{graphicx}
\numberwithin{equation}{section}
\newtheorem {thm}{Theorem}[section]
\newtheorem {lem}[thm]{Lemma}
\newtheorem {cor}[thm]{Corollary}
\newtheorem {prop}[thm]{Proposition}

\newcommand{\la}{\langle}
\newcommand{\ra}{\rangle}
\newcommand{\proof}{\par\noindent{\em Proof.\ }}

\newcommand{\qed}{\hfill $\square$\par\smallskip}

\title{\bf\boldmath 
$q$-Selberg extensions of Gasper's \\ two stage $q$-beta integrals
}
\author{
{\sc Masahiko Ito}\thanks{
Department of Mathematical Sciences, University of the Ryukyus, 
Okinawa 903-0213, Japan 
}
\ and
{\sc Peter J.~Forrester}\thanks{
School of Mathematics and Statistics, The University of Melbourne, Victoria 3010, Australia
}
}
\date{\em{\normalsize To the memory of Prof.~Masatoshi Noumi}
}

\allowdisplaybreaks 

\begin{document}
\maketitle
%
%
%
\begin{abstract}
Gasper gave a two-step extension of the Askey--Roy formula for a beta-integral type defined on the unit circle.
This involved increasing the number of parameters and adding a balancing condition. 
We derive a multidimensional generalization of the final formula in Gasper's extension. 
By considering a two-step degeneration involving parameters our generalization becomes Tarasov--Varchenko's formula --- itself a multidimensional generalization of the original Askey--Roy formula. 
The proof is done by Aomoto's method.
This generalized integration by parts strategy makes use of a class of functions known as 
the fundamental invariants of type $BC$.
\end{abstract}
%
\section{Introduction}
\label{section:1}
Throughout this paper, we fix $q \in \mathbb{C}^*$ with $|q|<1$ and use the symbols 
$(u;q)_\infty:=\prod_{i=0}^\infty(1-q^i u)$ and $(u;q)_N:=(u;q)_\infty/(q^Nu;q)_\infty$. 
We also use the abbreviation $(u_1,\ldots,u_m;q)_\infty=(u_1;q)_\infty\cdots (u_m;q)_\infty$.
We define $\theta(u;q)$ by $\theta(u;q)=(u,q/u;q)_\infty$, 
which satisfies 
\begin{equation}
\label{eq:00quasi-period}
\theta(q/u;q)=\theta(u;q)\quad\mbox{and}\quad\theta(qu;q)=-\theta(u;q)/u.
\end{equation}
We also use the abbreviation $\theta(u_1,\ldots,u_m;q)=\theta(u_1;q)\cdots \theta(u_m;q)$.
\par
In 1989 
Gasper \cite{Ga} gave a two-step extension of the Askey--Roy formula \cite{AR}
for a beta-integral type defined on the unit circle, by increasing the number of parameters and adding a balancing condition. 
The aim of this paper is to provide a multidimensional generalization 
of the final evaluation obtained after the two-step extension, which we now state. 
\begin{thm}
\label{thm:KBC}
Suppose that $|a_{i}|<1$ $(i=1,2,\ldots,6)$ and $|t|<1$.
Under the balancing condition $q=a_{1}a_{2}a_{3}a_{4}a_{5}a_{6}t^{2n-2}$, 
for $v\in \mathbb{C}^*$ we have 
\begin{equation}
\label{eq:KBC}
\begin{split}
&\frac{1}{n!}\int_{\mathbb{T}^{n}}
\prod_{i=1}^{n}
\bigg[
\frac{(z_{i}^{2};q)_{\infty}}
{(qz_{i}^{2};q)_{\infty}}
\frac{\theta(t^{n-1}a_{1}a_{2}vz_{i}^{-1}, vz_{i};q)}{\prod_{j=1}^{2}(a_jz_{i},a_jz_{i}^{-1};q)_{\infty}}
\prod_{k=3}^{6}\frac{(qa_{k}^{-1}z_{i};q)_{\infty}}{(a_{k}z_{i};q)_{\infty}}
\bigg]
\\&\qquad
\times\prod_{1\le j< k\le n}
\frac{(z_{j}z_{k};q)_{\infty}}{(qz_{j}z_{k};q)_{\infty}}
\frac{(z_{j}/z_{k},z_{k}/z_{j};q)_{\infty}}
{(tz_{j}/z_{k},tz_{k}/z_{j};q)_{\infty}}
\frac{(qt^{-1}z_{j}z_{k};q)_{\infty}}
{(tz_{j}z_{k};q)_{\infty}}
\,\varpi(z)
\\[5pt]
&\quad =
\prod_{i=1}^{n}\frac{(t;q)_{\infty}\prod_{l=1}^{2}\theta(t^{i-1}a_{l}v;q)\prod_{3\le j<k\le 6}\theta(t^{i-1}a_{j}a_{k};q)}
{(q,t^{i};q)_{\infty}\prod_{1\le j<k\le 6}(t^{i-1}a_{j}a_{k};q)_{\infty}}.
\end{split}
\end{equation}
Here the symbol $\varpi(z)$ is the measure given by 
\begin{equation}
\label{eq:varpi(z)}
\varpi(z)=
\frac{1}{(2\pi\sqrt{-1})^n}
\frac{dz_1}{z_1}\cdots\frac{dz_n}{z_n},
\end{equation}
and $\mathbb{T}^n$ is the $n$-fold direct product of the unit circle traversed in the positive direction, i.e.,  
\begin{equation}
\label{eq:n-torus}
\mathbb{T}^n=\{(z_1,\ldots,z_n)\in \mathbb{C}^n\,|\, |z_i|=1\ (i=1,\ldots,n)\}.
\end{equation}
\end{thm}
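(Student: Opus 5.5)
Writing $I(v)$ for the left-hand side of \eqref{eq:KBC}, we follow the route announced in the abstract: Aomoto's method, i.e.\ $q$-difference equations from a generalized integration by parts, built on the fundamental invariants of type $BC$. The plan has three steps. First, isolate the dependence on $v$. Second, show that $I(v)$ and the right-hand side satisfy the same $q$-difference structure in $v$ and in the $a_k$. Third, fix the remaining constant with a known evaluation.

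\emph{Step 1 ($v$ as a theta-function variable).} The only $v$-dependence of the integrand is through $\prod_i\theta(t^{n-1}a_1a_2vz_i^{-1},vz_i;q)$. By \eqref{eq:00quasi-period}, each factor $\theta(bv/z;q)\theta(vz;q)$ with $b=t^{n-1}a_1a_2$ is, as a function of $v$, a theta function of degree two with quasi-period multiplier $q/(bv^2)$ under $v\mapsto qv$. This multiplier does not depend on $z$. Hence $I(v)$ lies in the space $\Theta_n$ of holomorphic functions on $\mathbb{C}^*$ with $f(qv)=(q/(bv^2))^n f(v)$. This space has dimension $2n$.

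The right-hand side $\prod_{i}\prod_{l=1}^2\theta(t^{i-1}a_lv;q)$ satisfies the same quasi-periodicity, because $\prod_i t^{2(i-1)}a_1a_2=t^{n(n-1)}(a_1a_2)^n$ matches $b^n=t^{n(n-1)}(a_1a_2)^n$. It therefore also lies in $\Theta_n$. To pin $I(v)$ down inside $\Theta_n$, we show that $I(v)$ vanishes at $v=q^{m}/(t^{i-1}a_l)$, $l=1,2$, $i=1,\dots,n$. That gives $2n$ zeros modulo $q^{\mathbb Z}$, which forces $I(v)=C\prod_{i,l}\theta(t^{i-1}a_lv;q)$.

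\emph{Step 2 (the vanishing).} This is where Aomoto's method enters. Using the $q$-shift $z_i\mapsto qz_i$ and the symmetry $z_i\mapsto z_i^{-1}$ (after symmetrizing the weight), we form $\nabla\phi=\phi(z)-T_{z_i}(\Phi\phi)/\Phi$, where $\Phi$ is the weight. Its integral vanishes whenever no poles cross $\mathbb{T}^n$, which is guaranteed by $|a_i|<1$ and $|t|<1$. Applied to skew-symmetrizations of $\theta$-functions in $z$ built from the fundamental $BC$ invariants $e_k(z+z^{-1})$, this gives linear relations among the integrals $J(\phi)$. At $v=1/a_1$ the theta factor $\theta(vz_i)$ cancels the pole factor $(a_1z_i^{-1};q)_\infty$ partially. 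The balancing condition $q=a_1\cdots a_6t^{2n-2}$ is used precisely here: it makes the boundary terms telescope, so the truncated integral is shown to vanish. The points $t^{i-1}a_l$ are reached inductively, using the $t$-interaction $\theta$-structure of the cross terms.

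An alternative route to Step 2, which I would try first, is to derive directly from $\nabla$ a first-order $q$-difference equation in $a_1$ (with $a_6$ compensating to keep the balancing condition), of the form
\[
I(a_1\to qa_1,a_6\to a_6/q)=R(a)\,I,
\]
with $R$ a ratio of products. The right-hand side of \eqref{eq:KBC} satisfies the same equation. Iterating the equation and using analyticity, together with the symmetry in $a_3,\dots,a_6$ and in $a_1,a_2$, reduces everything to a constant independent of the $a$'s.

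\emph{Step 3 (normalization).} To fix the constant, we let a parameter tend to a limit, for instance $a_3\to$ a value where $(qa_3^{-1}z;q)_\infty/(a_6z;q)_\infty$ collapses. Following the two-step degeneration mentioned in the abstract, this reduces the integral to the Tarasov--Varchenko (Askey--Roy type) formula, or to the Gustafson/Nassrallah--Rahman type $BC_n$ integral, whose value is known.

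The main obstacle is Step 2: choosing the correct family of $\phi$'s, built from the $BC$ fundamental invariants and theta factors, so that the $\nabla$-relations close into a triangular system and yield the $2n$ zeros or the first-order equation. Verifying that the balancing condition exactly cancels the non-telescoping terms is the heart of the computation.
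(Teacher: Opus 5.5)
\textbf{Verdict.} There is a genuine gap. The overall shape of your plan is reasonable: isolate $v$, derive a first-order recurrence in $(a_1,a_6)$ by Aomoto's method, then normalize. But two load-bearing steps are missing or would fail as described.

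\textbf{The $v$-dependence (your Step 2).} You give no actual mechanism for why $I(v)$ vanishes at $v=q^m/(t^{i-1}a_l)$, and the one you sketch is misdirected. The balancing condition plays no role in the $v$-dependence, and nothing ``telescopes.'' What works, and what the paper does, is a residue expansion.
\begin{itemize}
\item Split the integrand as $P(z;v)Q(z)$, where $P$ collects $\theta(t^{n-1}a_1a_2vz_i^{-1},vz_i;q)/\theta(a_1z_i^{-1},a_2z_i^{-1};q)$ and the ratios $\theta(z_j/z_k;q)/\theta(tz_j/z_k;q)$, normalized by suitable powers of $z_i$.
\item $P$ is $q$-periodic in each $z_i$, with no balancing needed.
\item Pass to the ordered cycle $|z_1|=1$, $|z_i/z_{i-1}|=1$ and shrink it. The integral becomes $\sum_j\mathrm{Res}_{z=\zeta_j}P(z;v)\tilde\varpi(z)\sum_{\nu\in\Lambda_j}Q(\zeta_jq^\nu)$.
\item At every node $\zeta_j$ the $v$-dependent factor is the same product $\prod_i\theta(t^{i-1}a_1v,t^{i-1}a_2v;q)$.
\end{itemize}
This gives Lemma~\ref{lem:J(v)/factor} directly, so your dimension count in $\Theta_n$ is unnecessary. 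Incidentally, the quasi-periodicity multiplier per factor is $(bv^2)^{-1}$, not $q/(bv^2)$.

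\textbf{The evaluation.} This is the more serious gap. A first-order recurrence under $(a_l,a_k)\mapsto(qa_l,q^{-1}a_k)$ only shows that $I/\mathrm{RHS}$ is a pseudo-constant. ``Analyticity and symmetry'' does not upgrade it to a constant without controlling the poles the ratio can inherit from the zeros of $\prod\theta(t^{i-1}a_ja_k;q)$. None of your normalizations closes this:
\begin{itemize}
\item At $a_3a_6=q$ both sides vanish, since $\theta(a_3a_6;q)=\theta(q;q)=0$, so you learn nothing.
\item Gustafson's integral is not a specialization of yours. The integrands differ by the non-constant $q$-periodic factor $\tilde h(z)$, which no choice of $v$ trivializes.
\item The Tarasov--Varchenko limit is a continuous rescaling, along which a pseudo-constant need not be constant.
\end{itemize}

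\textbf{The missing idea.} Specialize $v=a_6$. Then $\theta(a_6z_i;q)(qa_6^{-1}z_i;q)_\infty/(a_6z_i;q)_\infty=(qa_6^{-1}z_i,qa_6^{-1}z_i^{-1};q)_\infty$ is entire, which removes the constraint $|a_6|<1$. Under balancing, $\theta(t^{n-1}a_1a_2a_6z_i^{-1};q)=\theta(t^{n-1}a_3a_4a_5z_i;q)$, which is untouched by the shift. Now iterate \eqref{eq:recurrenceJ} with $a_1\to q^Na_1$, $a_6\to q^{-N}a_6$, $N\to\infty$. The limit integral \eqref{eq:KBClimit} still depends on $a_2,\dots,a_5$, so it must be evaluated separately. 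Its residue expansion at $(a_2,a_2t,\dots,a_2t^{n-1})q^\nu$ is van Diejen's truncated Macdonald-type sum, which has a known product evaluation.

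\textbf{The recurrence itself.} This needs more than ``skew-symmetrizations built from $e_k(z+z^{-1})$.'' The required ingredients are:
\begin{itemize}
\item the interpolation basis $E_r$ of ${\sf H}_{1,n}$ with $E_i(\zeta_j)=\delta_{ij}$, with nodes built from $a_1,a_6$;
\item the functions $h_r=\sum_i\nabla_{q,z_i}E^{(n-1)}_{r-1}(z_{\,\widehat{i}\,})$. Balancing, in the form $a_1\cdots a_6t^{2n-2}=1$, is what cancels their top-degree term and puts them in ${\sf H}_{1,n}$;
\item the vanishing of $F_{-,i}(\zeta_j)$, and of $F_{+,i}(\zeta_j)$ for $i\ne j,n$, which makes each $h_r$ a two-term combination of $E_r$ and $E_{r-1}$;
\item the identifications $T_{q,a_1}\tilde\Psi/\tilde\Psi\propto E_0$ and $T_{q,a_6}\tilde\Psi/\tilde\Psi\propto E_n$.
\end{itemize}
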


A transformation of parameters and deformation of the cycle 
--- the details of which are presented in Appendix \ref{appendix:A}
--- shows a further two multidimensional integral evaluation formulas, both of which
 are equivalent to \eqref{eq:KBC}.  
Those two formulas are used later when discussing the case where \eqref{eq:KBC} is degenerate. 
Moreover, 
we will see that
the $n=1$ case of \eqref{eq:KBC} is equivalent to the final of Gasper's extended Askey--Roy formulas \cite{Ga}. 
\par
For the first equivalent form, the required reset of the parameters in \eqref{eq:KBC} as 
$$a_{1}\to a_{1}\varepsilon,\  
a_{2}\to a_{2}\varepsilon,\ 
a_{3}\to a_{3}\varepsilon,\ 
a_{4}\to b_{2}\varepsilon^{-1},\ 
a_{5}\to b_{3}\varepsilon^{-1},\ 
a_{6}\to b_{4}\varepsilon^{-1}\
\mbox{and}\ 
v\to v \varepsilon^{-1}.
$$
This procedure is to be accompanied by 
a simultaneous transformation of the integral variables (deformation of the cycle) as 
$z_{i}\to z_{i}\varepsilon$ $(i=1,\ldots, n)$, allowing for a restatement of   \eqref{eq:KBC}.
\begin{cor} Suppose that $|a_i|<1$ $(i=1,2,3)$, $|b_j|<1$ $(j=1,2,3)$ and $|t|<1$. 
\label{coro:main-2}
Under the balancing condition 
$q=a_{1}a_{2}a_{3}b_{2}b_{3}b_{4}t^{2n-2}$, 
for $v\in \mathbb{C}^*$ we have 
\begin{equation}
\label{eq:KBCe-2}
\begin{split}
&\frac{1}{n!}\int_{\mathbb{T}^{n}}
\prod_{i=1}^{n}
\bigg[
\frac{(\varepsilon^{2}z_{i}^{2};q)_{\infty}\,
\theta(t^{n-1}a_{1}a_{2}vz_{i}^{-1}, vz_{i},a_{3}z_{i}^{-1};q)}
{(q\varepsilon^{2}z_{i}^{2};q)_{\infty}\prod_{j=1}^{3}(a_j\varepsilon^{2} z_{i},a_jz_{i}^{-1};q)_{\infty}}
\prod_{k=1}^{3}\frac{(qb_{k}^{-1}\varepsilon^{2} z_{i};q)_{\infty}}{(b_{k}z_{i};q)_{\infty}}
\bigg]
\\&\qquad\quad
\times\prod_{1\le j< k\le n}
\frac{(\varepsilon^{2} z_{j}z_{k};q)_{\infty}}{(q\varepsilon^{2} z_{j}z_{k};q)_{\infty}}
\frac{(z_{j}/z_{k},z_{k}/z_{j};q)_{\infty}}
{(tz_{j}/z_{k},tz_{k}/z_{j};q)_{\infty}}
\frac{(qt^{-1}\varepsilon^{2} z_{j}z_{k};q)_{\infty}}
{(t\varepsilon^{2} z_{j}z_{k};q)_{\infty}}
\,\varpi(z)
\\
&\quad =
\prod_{i=1}^{n}\frac{(t;q)_{\infty}\prod_{l=1}^{2}\theta(t^{i-1}a_{l}v;q)
\prod_{k=1}^3\theta(t^{i-1}a_3b_k;q)
\prod_{1\le j<k\le 3}(qt^{-(i-1)}b_{j}^{-1}b_{k}^{-1}\varepsilon^{2};q)}
{(q,t^{i};q)_{\infty}
\prod_{1\le j<k\le 3}(t^{i-1}a_{j}a_{k}\varepsilon^{2};q)_{\infty}
\prod_{j,k=1}^{3}(t^{i-1}a_{j}b_{k};q)_{\infty}}.
\end{split}
\end{equation}
\end{cor}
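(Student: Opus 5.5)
The plan is to obtain \eqref{eq:KBCe-2} from \eqref{eq:KBC} by the parameter substitution described just before the corollary, together with the change of variables $z_i\to \varepsilon z_i$. Both sides are then simplified, and the integration cycle is justified by analytic continuation. The balancing condition is unaffected: three parameters are multiplied by $\varepsilon$ and three by $\varepsilon^{-1}$, so $a_1a_2a_3a_4a_5a_6t^{2n-2}$ becomes $a_1a_2a_3b_2b_3b_4t^{2n-2}$. I read the corollary's $b_1,b_2,b_3$ as these $b_2,b_3,b_4$ after relabelling.

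\textbf{Integrand.} I will check the factors one at a time after substituting $a_j\to a_j\varepsilon$ ($j\le3$), $a_{3+k}\to b_k\varepsilon^{-1}$, $v\to v\varepsilon^{-1}$ and $z_i\to\varepsilon z_i$.
\begin{itemize}
\item $(z_i^2;q)_\infty/(qz_i^2;q)_\infty$ becomes the corresponding factor with $\varepsilon^2z_i^2$, and similarly for the $z_jz_k$ factors. The ratios $z_j/z_k$ are unchanged.
\item The theta arguments $t^{n-1}a_1a_2vz_i^{-1}$ and $vz_i$ are invariant.
\item For $j=1,2$, $(a_jz_i,a_jz_i^{-1};q)_\infty$ becomes $(a_j\varepsilon^2z_i,a_jz_i^{-1};q)_\infty$.
\item The $k=3$ factor $(qa_3^{-1}z_i;q)_\infty/(a_3z_i;q)_\infty$ becomes $(qa_3^{-1}z_i;q)_\infty/(a_3\varepsilon^2z_i;q)_\infty$. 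Multiplying numerator and denominator by $(a_3z_i^{-1};q)_\infty$ and using $\theta(u;q)=(u,q/u;q)_\infty$ turns this into $\theta(a_3z_i^{-1};q)/(a_3\varepsilon^2z_i,a_3z_i^{-1};q)_\infty$. This is exactly the extra theta factor, and it completes the product over $j=1,2,3$ in \eqref{eq:KBCe-2}.
\item For $k=4,5,6$ the factor becomes $(qb_k^{-1}\varepsilon^2z_i;q)_\infty/(b_kz_i;q)_\infty$.
\end{itemize}
The measure $\varpi(z)$ is invariant under the dilation.

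\textbf{Right-hand side.} The factors $\theta(t^{i-1}a_lv;q)$ are invariant. Among the pairs $j<k$ in $\{1,\dots,6\}$:
\begin{itemize}
\item pairs from $\{1,2,3\}$ give $(t^{i-1}a_ja_k\varepsilon^2;q)_\infty$;
\item mixed pairs give $(t^{i-1}a_jb_k;q)_\infty$;
\item pairs from the $b$'s give $(t^{i-1}b_jb_k\varepsilon^{-2};q)_\infty$.
\end{itemize}
In the numerator, the pairs $3\le j<k\le6$ give $\theta(t^{i-1}a_3b_k;q)$ and $\theta(t^{i-1}b_jb_k\varepsilon^{-2};q)$. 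Using $\theta(x;q)/(x;q)_\infty=(q/x;q)_\infty$, the latter divided by $(t^{i-1}b_jb_k\varepsilon^{-2};q)_\infty$ equals $(qt^{-(i-1)}b_j^{-1}b_k^{-1}\varepsilon^2;q)_\infty$. This yields the stated right-hand side.

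\textbf{Main obstacle: the cycle.} Literally, the substitution requires $|b_k\varepsilon^{-1}|<1$ and integrates over $|z_i|=|\varepsilon|$, whereas the corollary assumes $|b_k|<1$ and integrates over $\mathbb{T}^n$. I would handle this as follows.

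First, observe that under the hypotheses of Theorem~\ref{thm:KBC} the torus is a contour separating two families of poles. The incoming poles, at $z_i=a_jq^{m}$, $a_{k}q^m$ and $t^{\ell}$-shifts of these coming from the cross terms, lie inside. Their reciprocals, coming from the $z^{-1}$ factors, lie outside.

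Next, for fixed admissible parameters, the integral over $\mathbb{T}^n$ after the dilation equals the integral of the original integrand over the torus of radius $|\varepsilon|$. By Cauchy's theorem this radius may be moved freely as long as no pole is crossed. Hence both sides can be regarded as the integral over any cycle separating these pole families, and such an integral is analytic in the parameters.

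Then I would choose a region of $(a,b,\varepsilon,t)$, satisfying the balancing constraint (e.g.\ with $|\varepsilon|$ close to $1$), where both sets of hypotheses hold simultaneously. In that region \eqref{eq:KBCe-2} follows directly. Analytic continuation in the remaining parameters then extends it to all $|a_i|,|b_j|,|t|<1$, as long as the unit torus still separates the poles of the new integrand. The inner poles are at $z=b_kq^m$ and at $a_j q^{m}$ from $(a_jz^{-1};q)_\infty$ inverted; the outer ones come from $(a_j\varepsilon^2 z)$ and $(q\varepsilon^2z^2)$-type factors. The delicate point is making sure that no pole crosses $\mathbb{T}$ during the continuation, and I expect this bookkeeping, which I would present in an appendix-style argument, to be the only nontrivial step.
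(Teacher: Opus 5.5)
Your route is the paper's own (Appendix~A, which treats \eqref{eq:KBCe} and says \eqref{eq:KBCe-2} goes the same way): substitute, rescale $z_i\to\varepsilon z_i$, move the torus, then continue analytically. Your factor-by-factor algebra is correct. This includes absorbing $(qa_3^{-1}z_i;q)_\infty$ into $\theta(a_3z_i^{-1};q)/(a_3z_i^{-1};q)_\infty$ and turning $\theta(t^{i-1}b_jb_k\varepsilon^{-2};q)/(t^{i-1}b_jb_k\varepsilon^{-2};q)_\infty$ into $(qt^{-(i-1)}b_j^{-1}b_k^{-1}\varepsilon^{2};q)_\infty$. Your reading of the $b$-indices is also right: the statement's $b_2b_3b_4$ should be $b_1b_2b_3$.

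What is missing is the one step with analytic content. You say the identity ``follows directly'' in the overlap region, but it actually requires a contour deformation. In the new variables, Theorem~\ref{thm:KBC} gives the integral over $\mathbb{T}^n_{c}$ with $c=|\varepsilon|^{-1}$. You must show that the tori $\mathbb{T}^n_c$, $1\le c\le|\varepsilon|^{-1}$, avoid the polar set. You defer this, and the pole inventory you do give is wrong:
\begin{itemize}
\item $1/(b_kz_i;q)_\infty$ has poles at $b_k^{-1}q^{-m}$, which lie outside, not at $b_kq^m$ inside.
\item $(\varepsilon^2z_i^2;q)_\infty/(q\varepsilon^2z_i^2;q)_\infty=1-\varepsilon^2z_i^2$ and $(\varepsilon^2z_jz_k;q)_\infty/(q\varepsilon^2z_jz_k;q)_\infty=1-\varepsilon^2z_jz_k$ have no poles.
\item $(a_3z_i^{-1};q)_\infty$ cancels against $\theta(a_3z_i^{-1};q)$, so the inner single-variable poles are only $a_1q^m$ and $a_2q^m$.
\item You omit the cross poles $z_1=tq^mz_j$, $t^{-1}q^{-m}z_j$ and $t^{-1}\varepsilon^{-2}q^{-m}z_j^{-1}$. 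These are exactly the poles whose position moves with the torus.
\end{itemize}
In Theorem~\ref{thm:KBC} itself there are likewise no poles at $a_kq^m$ for $k\ge 3$. There the inner poles come from $(a_jz_i^{-1};q)_\infty$ with $j=1,2$, and the outer ones from $(a_kz_i;q)_\infty$.

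Here is the correct check. Assume $|a_j|<1$, $|b_k|<|\varepsilon|<1$, $|t|<1$, and let $|z_2|=\cdots=|z_n|=c\in[1,|\varepsilon|^{-1}]$. Then:
\begin{itemize}
\item $|a_jq^m|<1\le c$.
\item $|a_j^{-1}\varepsilon^{-2}q^{-m}|>|\varepsilon|^{-2}>c$.
\item $|b_k^{-1}q^{-m}|>|\varepsilon|^{-1}\ge c$; this is the only place $|b_k|<|\varepsilon|$ is used.
\item $|tq^mz_j|<c<|t^{-1}q^{-m}z_j|$.
\item $|t^{-1}\varepsilon^{-2}q^{-m}z_j^{-1}|\ge|t|^{-1}|\varepsilon|^{-1}>c$.
\end{itemize}
So no $\mathbb{T}^n_c$ meets a pole, and the integrals over $\mathbb{T}^n_{|\varepsilon|^{-1}}$ and $\mathbb{T}^n$ agree.

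The paper instead moves $z_1$ across the whole annulus $1\le|z_1|\le|\varepsilon|^{-1}$ with the other variables fixed. That is why it imposes $|t|<|\varepsilon|$ and later continues in $t$. Shrinking all radii together, as you propose, needs only $|t|<1$. The final continuation to $|b_k|<1$ is then harmless, since the poles $b_k^{-1}q^{-m}$ stay outside $\mathbb{T}$. You should also state $|\varepsilon|<1$ explicitly; the corollary leaves it implicit.
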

\par
On the other hand, after we reset the parameters in \eqref{eq:KBC} as 
$$a_{1}\to a_{1}\varepsilon,\  
a_{2}\to a_{2}\varepsilon,\ 
a_{3}\to b_{1}\varepsilon^{-1},\ 
a_{4}\to b_{2}\varepsilon^{-1},\ 
a_{5}\to b_{3}\varepsilon^{-1},\ 
a_{6}\to b_{4}\varepsilon^{-1}\
\mbox{and}\ 
v\to v \varepsilon^{-1}, 
$$
and simultaneously deform the integration cycles 
$z_{i}\to z_{i}\varepsilon$ $(i=1,\ldots, n)$, 
a second restatement of 
\eqref{eq:KBC} results.
\begin{cor} 
Suppose that $|a_i|<1$ $(i=1,2)$, $|b_j|<1$ $(j=1,\ldots,4)$ and $|t|<1$. 
Under the balancing condition 
$q\varepsilon^{2}=a_{1}a_{2}b_{1}b_{2}b_{3}b_{4}t^{2n-2}$, 
for $v\in \mathbb{C}^*$ we have 
\begin{equation}
\label{eq:KBCe}
\begin{split}
&\frac{1}{n!}\int_{\mathbb{T}^{n}}
\prod_{i=1}^{n}
\bigg[\frac{
(\varepsilon z_{i}, -\varepsilon z_{i};q)_{\infty}
}
{
(q\varepsilon z_{i}, -q\varepsilon z_{i};q)_{\infty}
}
\frac{\theta(t^{n-1}a_{1}a_{2}vz_{i}^{-1}, vz_{i};q)}{\prod_{j=1}^{2}(a_j\varepsilon^{2} z_{i},a_jz_{i}^{-1};q)_{\infty}}
\prod_{k=1}^{4}\frac{(qb_{k}^{-1}\varepsilon^{2} z_{i};q)_{\infty}}{(b_{k}z_{i};q)_{\infty}}
\bigg]
\\&\qquad\quad
\times\prod_{1\le j< k\le n}
\frac{(\varepsilon^{2} z_{j}z_{k};q)_{\infty}}{(q\varepsilon^{2} z_{j}z_{k};q)_{\infty}}
\frac{(z_{j}/z_{k},z_{k}/z_{j};q)_{\infty}}
{(tz_{j}/z_{k},tz_{k}/z_{j};q)_{\infty}}
\frac{(qt^{-1}\varepsilon^{2} z_{j}z_{k};q)_{\infty}}
{(t\varepsilon^{2} z_{j}z_{k};q)_{\infty}}
\,\varpi(z)
\\
&\quad =
\prod_{i=1}^{n}\frac{(t;q)_{\infty}\prod_{l=1}^{2}\theta(t^{i-1}a_{l}v;q)
\prod_{1\le j<k\le 4}(qt^{-(i-1)}b_{j}^{-1}b_{k}^{-1}\varepsilon^{2};q)}
{(q,t^{i};q)_{\infty}(t^{i-1}a_{1}a_{2}\varepsilon^{2};q)_{\infty}\prod_{j=1}^{2}\prod_{k=1}^{4}(t^{i-1}a_{j}b_{k};q)_{\infty}}.
\end{split}
\end{equation}
\end{cor}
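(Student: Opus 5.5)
The plan is to derive the Corollary directly from Theorem~\ref{thm:KBC}. We substitute parameters as indicated before the statement, rescale the integration variables by $\varepsilon$, and then move the cycle back to $\mathbb{T}^n$. Concretely, in \eqref{eq:KBC} replace $a_1\to a_1\varepsilon$, $a_2\to a_2\varepsilon$, $a_{k+2}\to b_k\varepsilon^{-1}$ $(k=1,\ldots,4)$ and $v\to v\varepsilon^{-1}$, and write $z_i=\varepsilon w_i$.

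\emph{Step 1 (bookkeeping of the integrand).} The one-variable factors transform as follows:
\begin{itemize}
\item $(z_i^2;q)_\infty/(qz_i^2;q)_\infty$ becomes $(\varepsilon^2w_i^2;q)_\infty/(q\varepsilon^2w_i^2;q)_\infty=(\varepsilon w_i,-\varepsilon w_i;q)_\infty/(q\varepsilon w_i,-q\varepsilon w_i;q)_\infty$;
\item $\theta(t^{n-1}a_1a_2vz_i^{-1},vz_i;q)$ becomes $\theta(t^{n-1}a_1a_2vw_i^{-1},vw_i;q)$, since the powers of $\varepsilon$ cancel;
\item $(a_jz_i,a_jz_i^{-1};q)_\infty$ becomes $(a_j\varepsilon^2w_i,a_jw_i^{-1};q)_\infty$;
\item $(qa_k^{-1}z_i;q)_\infty/(a_kz_i;q)_\infty$ becomes $(qb_k^{-1}\varepsilon^2w_i;q)_\infty/(b_kw_i;q)_\infty$.
\end{itemize}
For the pair factors, $z_jz_k\mapsto\varepsilon^2w_jw_k$, while $z_j/z_k=w_j/w_k$ is unchanged. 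The measure $\varpi(z)$ is invariant under $z_i=\varepsilon w_i$. The balancing condition $q=a_1\cdots a_6t^{2n-2}$ becomes $q=a_1a_2b_1b_2b_3b_4\varepsilon^{-2}t^{2n-2}$, i.e.\ $q\varepsilon^2=a_1a_2b_1\cdots b_4t^{2n-2}$.

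\emph{Step 2 (right-hand side).} The factors $\theta(t^{i-1}a_lv;q)$ are unchanged. Among the denominators $(t^{i-1}a_ja_k;q)_\infty$, the pair $(1,2)$ gives $(t^{i-1}a_1a_2\varepsilon^2;q)_\infty$, and the mixed pairs give $(t^{i-1}a_jb_k;q)_\infty$. The pairs $3\le j<k\le6$ give $\theta(x;q)/(x;q)_\infty=(q/x;q)_\infty$ with $x=t^{i-1}b_jb_k\varepsilon^{-2}$, which is $(qt^{-(i-1)}b_j^{-1}b_k^{-1}\varepsilon^2;q)_\infty$. This produces exactly the right side of \eqref{eq:KBCe}.

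\emph{Step 3 (contour deformation, the main point).} After substitution the cycle is $\{|w_i|=|\varepsilon|^{-1}\}$, and we must deform it to $\mathbb{T}^n$. First take $\varepsilon$ in an open set where the hypotheses of both statements hold, e.g.\ $|\varepsilon|$ close to $1$ with $|a_j\varepsilon|<1$, $|a_j|<|\varepsilon|^{-1}$ and $|b_k|<|\varepsilon|$. Deform along the family of tori $|w_i|=r$ for all $i$, with $r$ running between $|\varepsilon|^{-1}$ and $1$. Along this family:
\begin{itemize}
\item the ratio poles $w_j=t^{\pm1}q^{-m}w_k$ are never met, since $|w_j/w_k|=1$;
\item the product poles satisfy $|w_jw_k|=|q|^{-m}/(|t||\varepsilon|^2)>r^2$;
\item the inner poles $w_i=a_jq^m$ stay inside, since $|a_j|<r$;
\item the outer poles $w_i=q^{-m}/(a_j\varepsilon^2)$ and $w_i=q^{-m}/b_k$ stay outside, since $|a_j\varepsilon^2|^{-1}>r$ and $|b_k|^{-1}>r$ in this region.
\end{itemize}
By Cauchy's theorem the value of the integral is therefore unchanged, which proves \eqref{eq:KBCe} on this open set. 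Eliminating one parameter, say $b_4$, via the balancing condition, both sides are analytic in the remaining parameters wherever no pole meets $\mathbb{T}^n$. Analytic continuation then extends the identity to the full range stated in the Corollary. The only delicate part is this pole-tracking in Step 3; the rest is routine verification.
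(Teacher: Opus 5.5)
Your proposal is correct and is essentially the paper's own argument in Appendix A. Both substitute the parameters, rescale $z_i=\varepsilon w_i$, shrink the radius of the torus $\{|w_i|=c\}$ from $|\varepsilon|^{-1}$ to $1$ by Cauchy's theorem, and then enlarge the parameter range by analytic continuation; your Steps~1--2 make explicit bookkeeping the paper leaves to the reader. The one difference is that the paper first imposes $|t|<\epsilon=|\varepsilon|<1$, because it checks that no pole lies in the whole annulus $1\le|z_1|\le\epsilon^{-1}$, whereas you only check that no pole meets the tori themselves (the ratio poles $w_j=(tq^m)^{\pm1}w_k$ are excluded by $|w_j/w_k|=1$), which needs only $|t|<1$. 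You should state explicitly that $|\varepsilon|<1$, since that is the case in which your listed conditions $|a_j\varepsilon|<1$ and $|b_k|<|\varepsilon|$ are the right ones.
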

\par
If $n=1$, \eqref{eq:KBCe} coincides exactly  with 
Gasper's further extended Askey--Roy formula \cite[(3.10)]{Ga} \cite[(4.11.4)]{GR} of Gasper--Rahman's book by substitution with
$\varepsilon =\sqrt{a}$, $a_{1}=\alpha$, $a_{2}=\beta$, $b_{1}=b$, $b_{2}=c$, $b_{3}=d$, $b_{4}=f$ and $v=q/\gamma$. 
Also, 
There is further immediate consequence in the form of
\eqref{eq:KBC} as exhibited in \eqref{eq:KBCe}. 
Thus take the limit $\varepsilon \to 0$ 
for fixed parameters $a_{1}$, $a_{2}$, $b_{1}$, $b_{2}$, $b_{3}$ and $t$. 
Then $b_{4}=q\varepsilon^{2}/a_{1}a_{2}b_{1}b_{2}b_{3}t^{2n-2}\to 0$ and $qb_{4}^{-1}\varepsilon^{2}$ is fixed as $qb_{4}^{-1}\varepsilon^{2}=a_{1}a_{2}b_{1}b_{2}b_{3}t^{2n-2}$. Consequently, without requiring a balancing condition, we have
\begin{equation*}
\begin{split}
&\frac{1}{n!}\int_{\mathbb{T}^{n}}
\prod_{i=1}^{n}
\bigg[
\frac{\theta(t^{n-1}a_{1}a_{2}vz_{i}^{-1}, vz_{i};q)}{\prod_{j=1}^{2}(a_jz_{i}^{-1};q)_{\infty}}
\frac{(t^{2n-2}a_{1}a_{2}b_{1}b_{2}b_{3}\, z_{i};q)_{\infty}}{\prod_{k=1}^{3}(b_{k}z_{i};q)_{\infty}}
\bigg]
\prod_{1\le j< k\le n}
\frac{(z_{j}/z_{k},z_{k}/z_{j};q)_{\infty}}
{(tz_{j}/z_{k},tz_{k}/z_{j};q)_{\infty}}
\,\varpi(z)
\\
&\quad =
\prod_{i=1}^{n}\frac{(t;q)_{\infty}\prod_{l=1}^{2}\theta(t^{i-1}a_{l}v;q)
\prod_{j=1}^{3}(t^{2n-i-1}b_{j}^{-1}a_{1}a_{2}b_{1}b_{2}b_{3};q)}
{(q,t^{i};q)_{\infty}\prod_{j=1}^{2}\prod_{k=1}^{3}(t^{i-1}a_{j}b_{k};q)_{\infty}}.
\end{split}
\end{equation*}
Reimposing a balancing condition by setting 
$a_{3}:=q/a_{1}a_{2}b_{1}b_{2}b_{3}t^{2n-2}$,
this is the formula we discussed in \cite{FIN2026}. 
\begin{cor}
\label{coro:main}
Suppose $|a_i|<1$ $(i=1,2)$, $|b_j|<1$ $(j=1,2,3)$ and $|t|<1$.
Under the balancing condition $a_1a_2a_3b_1b_2b_3t^{2n-2}=q$, for $v\in \mathbb{C}^*$ we have 
\begin{equation}
\label{eq:main}
\begin{split}
&\frac{1}{n!}\int_{\mathbb{T}^n} \prod_{i=1}^n\frac{
\theta(t^{n-1}a_1a_2vz_i^{-1}, vz_i,a_3z_i^{-1};q)}{\prod_{m=1}^{3}(a_mz_i^{-1},b_mz_i;q)_\infty}
\prod_{1\le j<k\le n}\frac{(z_j/z_k,z_k/z_j;q)_\infty}{(tz_j/z_k,tz_k/z_j;q)_\infty}
\varpi(z)\\
&\quad=\prod_{i=1}^n
\frac{(t;q)_\infty\prod_{l=1}^3\theta(t^{i-1}a_3b_l;q)\prod_{k=1}^2\theta(t^{i-1}a_kv;q)}{(q,t^i;q)_\infty\prod_{k,l=1}^3(t^{i-1}a_kb_l;q)_\infty}.
\end{split}
\end{equation}
\end{cor}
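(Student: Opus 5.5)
The plan is to obtain \eqref{eq:main} from Corollary~\ref{coro:main-2}'s companion, Corollary \eqref{eq:KBCe}, by letting $\varepsilon\to0$ and then reimposing a balancing condition, exactly as outlined before the statement. We fix $a_1,a_2,b_1,b_2,b_3,t$ inside the unit disc and let $b_4:=q\varepsilon^2/(a_1a_2b_1b_2b_3t^{2n-2})$, so that the balancing condition of \eqref{eq:KBCe} holds for every $\varepsilon$. For small $\varepsilon$ we also have $|b_4|<1$, so \eqref{eq:KBCe} applies with the unchanged cycle $\mathbb{T}^n$. Put $c:=qb_4^{-1}\varepsilon^2=t^{2n-2}a_1a_2b_1b_2b_3$, which does not depend on $\varepsilon$.

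\textbf{Step 1 (limit of the integral).} On the compact torus $\mathbb{T}^n$ the following factors tend to $1$ uniformly as $\varepsilon\to0$:
\begin{itemize}
\item every factor containing $\varepsilon$ or $\varepsilon^2$ explicitly, namely $(\pm\varepsilon z_i;q)_\infty$, $(a_j\varepsilon^2z_i;q)_\infty$, $(qb_k^{-1}\varepsilon^2z_i;q)_\infty$ for $k\le3$, and the cross terms in $\varepsilon^2z_jz_k$;
\item the denominator $(b_4z_i;q)_\infty$, since $b_4\to0$.
\end{itemize}
The remaining factor $(qb_4^{-1}\varepsilon^2z_i;q)_\infty$ equals $(cz_i;q)_\infty$ identically. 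The denominators $(a_jz_i^{-1},b_kz_i,tz_j/z_k;q)_\infty$ are bounded away from $0$ on $\mathbb{T}^n$ because all these parameters have modulus less than $1$. Hence the integrand converges uniformly, and we may interchange limit and integral. The result is the displayed unbalanced formula preceding the statement.

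\textbf{Step 2 (limit of the right-hand side).} As $\varepsilon\to0$:
\begin{itemize}
\item $(t^{i-1}a_1a_2\varepsilon^2;q)_\infty\to1$ and $(t^{i-1}a_jb_4;q)_\infty\to1$;
\item among the factors $(qt^{-(i-1)}b_j^{-1}b_k^{-1}\varepsilon^2;q)_\infty$, those with $j,k\le3$ tend to $1$;
\item the three factors with $k=4$ equal $(t^{2n-i-1}a_1a_2b_1b_2b_3/b_j;q)_\infty$ for every $\varepsilon$.
\end{itemize}

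\textbf{Step 3 (reimposing the balancing).} Set $a_3:=q/(a_1a_2b_1b_2b_3t^{2n-2})$, so $c=q/a_3$. On the left, \eqref{eq:00quasi-period} gives
\[
(cz_i;q)_\infty=(qz_i/a_3;q)_\infty=\frac{\theta(a_3z_i^{-1};q)}{(a_3z_i^{-1};q)_\infty}.
\]
This produces the factor $\theta(a_3z_i^{-1};q)$ and the missing denominator term with $m=3$ in \eqref{eq:main}. On the right,
\[
(t^{2n-i-1}a_1a_2b_1b_2b_3b_j^{-1};q)_\infty=(q/(t^{i-1}a_3b_j);q)_\infty=\frac{\theta(t^{i-1}a_3b_j;q)}{(t^{i-1}a_3b_j;q)_\infty}.
\]
These give the factors $\prod_l\theta(t^{i-1}a_3b_l;q)$ and complete $\prod_{k,l=1}^3(t^{i-1}a_kb_l;q)_\infty$ in the denominator, which is exactly \eqref{eq:main}.

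Two points remain. First, any $a_3$ with $|a_3|<1$ satisfying the balancing condition arises this way, by solving for the free parameters; alternatively, both sides are analytic in the parameters. Second, the only delicate step is the uniform convergence justification in Step 1. That step is routine here, because the cycle $\mathbb{T}^n$ need not be deformed: all poles of the integrand stay on the correct sides of the unit circle uniformly in small $\varepsilon$.
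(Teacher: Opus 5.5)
Your proposal is correct and follows the paper's own route: let $\varepsilon\to0$ in \eqref{eq:KBCe} with $b_4=q\varepsilon^2/(a_1a_2b_1b_2b_3t^{2n-2})$, so that $qb_4^{-1}\varepsilon^2$ stays fixed, to get the unbalanced formula. Then reinstate balancing via $a_3:=q/(a_1a_2b_1b_2b_3t^{2n-2})$, absorbing $(qz_i/a_3;q)_\infty$ and $(q/(t^{i-1}a_3b_j);q)_\infty$ into theta functions; that absorption follows from the definition $\theta(u;q)=(u,q/u;q)_\infty$, not from the quasi-periodicity. Your uniform-convergence argument supplies a detail the paper leaves implicit, and since the statement places no condition on $|a_3|$, every admissible parameter tuple is covered directly by your choice of $a_3$.
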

An alternative, more direct derivation of  \eqref{eq:main} is to use \eqref{eq:KBCe-2}.
Thus  \eqref{eq:main} results from the later 
by taking the limit $\varepsilon \to 0$ 
for fixed parameters $a_{1}$, $a_{2}$, $a_{3}$, $b_{1}$, $b_{2}$, $b_{3}$ and $t$, 
then the formula in Corollary \ref{coro:main}
can be obtained more directly.
\par
If $n=1$, \eqref{eq:main} coincides with Gasper's intermediate step extended Askey--Roy formula 
\cite[(3.9)]{Ga} \cite[(4.11.3)]{GR}. 
Taking the limit $b_3\to 0$ for fixed parameters $a_{1}, a_{2}, b_{1}, b_{2}$ and $t$, 
then $qa_{3}^{-1}=a_1a_{2}b_{1}b_{2}b_{3}t^{2n-2}$ $\to 0$, we see that the formula \eqref{eq:main} degenerates to 
Tarasov--Varchenko's multidimensional generalization of the original Askey--Roy formula \cite[Example (5.14)]{TV97}; 
see \cite[Introduction]{FIN2026} for details. 
\par
In \cite{FIN2026} it was pointed out that the formula \eqref{eq:main} 
is similar in structure to 
Gustafson's multidimensional 
generalization \cite{Gu1994} of Nassrallah--Rahman integral \cite{NR1985}, 
which we recall for comparison. 
\begin{prop}{\rm\cite[Theorem 2.1]{Gu1994}}
Suppose that $|a_i|<1$ $(i=1,2,\ldots,5)$ and $|t|<1$. 
Under the balancing condition $a_1a_2\cdots a_6t^{2n-2}=q$, it follows that 
\begin{equation}
\label{eq:mNRint}
\begin{split}
&\frac{1}{2^nn!}\int_{\mathbb{T}^n}
\prod_{i=1}^n\frac{(z_i^2,z_i^{-2},qa_6^{-1}z_i,qa_6^{-1}z_i^{-1};q)_\infty}
{\prod_{m=1}^5(a_mz_i,a_mz_i^{-1};q)}
\\&\qquad\times
\prod_{1\le j<k\le n}\frac{(z_jz_k^{-1},z_j^{-1}z_k,z_jz_k,z_j^{-1}z_k^{-1};q)_\infty}
{(tz_jz_k^{-1},tz_j^{-1}z_k,tz_jz_k,tz_j^{-1}z_k^{-1};q)_\infty}
\,\varpi(z)
\\[5pt]
&\quad
=\prod_{i=1}^n\frac{(t;q)_\infty\prod_{k=1}^5(qa_6^{-1}a_k^{-1}t^{-(i-1)};q)_\infty}
{(q,t^i;q)_\infty\prod_{1\le k\le l\le 5}(a_ka_lt^{i-1};q)_\infty}.
\end{split}
\end{equation}
\end{prop}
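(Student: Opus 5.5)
The plan is to run Aomoto's method on the Nassrallah--Rahman-type weight in \eqref{eq:mNRint}, as the abstract announces for \eqref{eq:KBC}. First I derive a first-order $q$-difference equation for the left-hand side under a shift of parameters that preserves the balancing condition. Then I use that equation, together with analyticity, to reduce the evaluation to a degenerate case with a known answer.

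\textbf{Setup.} Let $\Phi(z)$ be the $BC_n$-symmetric integrand of \eqref{eq:mNRint} (without $\varpi$), and let $J(a_1,\ldots,a_6)$ be its integral with $\frac{1}{2^nn!}$. For a Laurent polynomial $\varphi(z)$, the invariance of $\varpi$ under $z_1\to qz_1$ and Cauchy's theorem give
$$\int_{\mathbb{T}^n}\Phi(z)\Big(\varphi(z)-b(z)\varphi(T_{z_1}z)\Big)\varpi(z)=0,\qquad b(z)=\frac{T_{z_1}\Phi(z)}{\Phi(z)}.$$
This requires that no poles are crossed; I would impose $|a_i|$ small and then continue analytically. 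The factor $b(z)$ is an explicit rational function. I would choose
$$\varphi(z)=\prod_{i=1}^n z_i^{-1}\prod_{k=1}^{5}(1-a_kz_i^{-1})\Big/\big(\text{the }z_1\text{-dependent part}\big)$$
times a $W(BC_{n-1})$-symmetric factor. Concretely, $\varphi$ is built from the fundamental $BC$ invariants $\prod_i(1-a_1z_i)(1-a_1z_i^{-1})$ and $e_r(z_1+z_1^{-1},\ldots)$. With this choice, after antisymmetrizing with respect to $W(BC_n)$ the relation becomes a relation among integrals of the form $\int\Phi\,\cdot\,(\text{symmetric polynomial})\varpi$. The $BC$ analogue of the Weyl denominator, $\prod_i(z_i-z_i^{-1})\prod_{j<k}(z_j+z_j^{-1}-z_k-z_k^{-1})$, then cancels.

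\textbf{Difference equation.} Comparing these relations, I expect
$$\frac{J(a_1,\ldots,a_5,a_6)}{J(qa_1,a_2,\ldots,a_5,a_6/q)}=\prod_{i=1}^n\frac{(\text{explicit binomials in }a_1a_k t^{i-1},\ qa_6^{-1}a_1^{-1}t^{1-i})}{\cdots},$$
which coincides with the corresponding ratio of right-hand sides in \eqref{eq:mNRint}. This step is the main obstacle. One has to pick $\varphi$ so that only a single lower-order term survives, which should come from the leading term of $\prod_i(1-a_1z_i)(1-a_1z_i^{-1})$. One also has to verify that the remaining symmetric polynomials collapse, via the $BC$ interpolation property of the invariants evaluated at $z_i=a_1t^{i-1}$.

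\textbf{Uniqueness and normalization.} By the $S_5$ symmetry of $J$ in $a_1,\ldots,a_5$, the same equation holds for every pair $(a_k,a_6)$. So the ratio $J/(\text{RHS})$ is invariant under $a_k\to qa_k$, $a_6\to a_6/q$. It is also holomorphic in the parameters near the origin, with $a_6$ eliminated through the balancing condition. Iterating $a_k\to q^{N}a_k$ and letting $N\to\infty$ sends these parameters toward $0$, so the ratio equals its limiting value. In that limit $qa_6^{-1}=a_1\cdots a_5t^{2n-2}\to 0$, and the integral degenerates to Gustafson's $q$-Selberg (Askey--Wilson--Selberg) integral with fewer parameters. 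Its evaluation, or further the $a_k\to 0$ limit giving the Macdonald constant term $\prod_i (t;q)_\infty/(q,t^i;q)_\infty$, matches the limit of the right-hand side. I would check the limit by dominated convergence on $\mathbb{T}^n$. This fixes the constant and proves \eqref{eq:mNRint}.
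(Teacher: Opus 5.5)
Your architecture matches the strategy the paper uses for \eqref{eq:KBC} and points to for \eqref{eq:mNRint}: a first-order $q$-difference equation in the direction $a_1\to qa_1$, $a_6\to q^{-1}a_6$, iteration to a degenerate case, and a known evaluation there. The Remark after the lemma $\langle\nabla_{q,z_i}\varphi(z)\rangle=0$ notes that the same relations hold for Gustafson's integrand. Your normalization paragraph is sound, since both sides depend on $a_6$ only through $qa_6^{-1}=a_1\cdots a_5t^{2n-2}$ and are symmetric in $a_1,\dots,a_5$. When you compare with the ratio of right-hand sides, read the denominator product as $1\le k<l\le 5$, as in Gustafson's original. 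The relevant numerator factors are $1-qa_6^{-1}a_k^{-1}t^{-(i-1)}$ for $k=2,\dots,5$; the $k=1$ factor is shift-invariant.

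The gap is the difference equation itself, which you only say you expect, and the mechanism you sketch would not produce it. Modulo the relations $\langle\nabla_{q,z_i}\varphi\rangle=0$ you are working in ${\sf H}_{1,n}$, which has dimension $n+1$. The $a_1$-shift factor is $\prod_i(1-a_1z_i)(1-a_1z_i^{-1})$, and the $a_6$-shift factor is proportional to $\prod_i(1-a_6z_i)(1-a_6z_i^{-1})$. Relating them needs $n$ independent relations, not one $\varphi$ with ``a single surviving lower-order term''. In the basis $e_r(z_1+z_1^{-1},\dots)$ those relations form a full linear system with no evident product solution. Interpolation at the single point $z_i=a_1t^{i-1}$ fixes only one coefficient.

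The missing idea is the paper's choice of test functions. Take $h_r=\sum_i\nabla_{q,z_i}E^{(n-1)}_{r-1}(z_{\,\widehat{i}\,})$ for $r=1,\dots,n$. Here the fundamental $BC$ invariants $E_r$ interpolate at the mixed points $\zeta_j=(a_1,\dots,a_1t^{j-1},a_6,\dots,a_6t^{n-j-1})$, i.e.\ \eqref{eq:zeta_j} with $a_2$ replaced by $a_6$. Then $E_0$ and $E_n$ are exactly the two shift factors, and two facts follow.
(i) Under $a_1\cdots a_6t^{2n-2}=1$ the top coefficient $a_1\cdots a_6t^{2n-2}-1$ vanishes, so $h_r\in{\sf H}_{1,n}$. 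This is where the balancing condition enters, and your plan never uses it.
(ii) $F_{-,i}(\zeta_j)=0$ for all $i,j$, and $F_{+,i}(\zeta_j)=0$ for $i\ne j,n$. Hence $h_r=F_{+,r}(\zeta_r)E_r+F_{+,n}(\zeta_{r-1})E_{r-1}$.
This bidiagonal chain from $E_0$ to $E_n$ gives $\langle E_n\rangle/\langle E_0\rangle$ as an explicit product. Replacing $a_6$ by $q^{-1}a_6$ then restores the balancing condition $=q$. Without this, or an equivalent explicit solution of the linear system, the central step of your proof is missing.
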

A proof of \eqref{eq:mNRint} is given
in \cite{Ito2011, INIntBC} according to the strategy of Aomoto's method \cite{AAR99,Ao87, PJF-Book}.
A key step is the use of a class of 
functions known as fundamental invariants of type $BC$ \cite{IN2016}. 
In this paper the formula \eqref{eq:KBC} will also be proven by Aomoto's method, 
and use made of the fundamental invariants of type $BC$.
\par
Although the formulas \eqref{eq:main} and \eqref{eq:mNRint} are similar in structure, there is also an important difference. 
This is that the parameter $v$ of \eqref{eq:main} does not enter the balancing condition. 
Moreover, the integral \eqref{eq:main} is quasiperiodic in $v$, 
which shows a qualitative difference of this parameter from the other ones. 
This question was left open in our previous paper \cite{FIN2026} --- providing an explanation is also an aim of the present work. 
\par
Comparing \eqref{eq:mNRint} and \eqref{eq:KBC}, the latter being an extension of \eqref{eq:main}, 
they appear to be different. 
In fact the only difference between the integrand of \eqref{eq:mNRint} and that of \eqref{eq:KBC} is 
a pseudo-constant ($q$-periodic function), which includes the extra parameter $v$. 
As we will see later in Lemma \ref{lem:nabla varphi=0}, 
the $q$-difference equations satisfied by the integrals defined by \eqref{eq:KBC} or \eqref{eq:mNRint} depend only on their integrands. 
A consequence is that we can ignore the difference in the pseudo-constant between the two integrands and consider these two integrals to be solutions of a common $q$-difference equation. 
The only distinction between the two solutions is the boundary conditions. 
Since the parameter $v$ is included only in the pseudo-constant, 
it does not affect the form of the $q$-difference equation satisfied by the integral of \eqref{eq:KBC}. 
Therefore, it does not affect the balancing condition. 
Since the boundary conditions are determined by the location of the poles of the integrand, 
the distinction between the two solutions depends on 
the geometric location of the poles contained in the pseudo-constants.
Since the $q$-difference equation is rank 1 (i.e., a two-term recurrence), 
the connection between these two solutions can be made explicit.
\begin{prop}
Let $J$ and $I$ denote the integrals given by the left-hand side of \eqref{eq:KBC} and \eqref{eq:mNRint} respectively.
Under the balancing condition $q=a_{1}a_{2}a_{3}a_{4}a_{5}a_{6}t^{2n-2}$, we have 
\begin{equation*}
\frac{J}
{I}
=\prod_{i=1}^{n}\frac{\prod_{l=1}^{2}\theta(t^{i-1}a_{l}v;q)\prod_{3\le j<k\le 5}\theta(t^{i-1}a_{j}a_{k};q)}
{\prod_{l=1}^{2}\theta(t^{i-1}a_{l}a_{6};q)}.
\end{equation*}
\end{prop}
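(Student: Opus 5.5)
The plan is to obtain the ratio directly from the two closed-form evaluations already available: Theorem \ref{thm:KBC} gives $J$, and Gustafson's formula \eqref{eq:mNRint} gives $I$. Both hold under the same balancing condition $q=a_1a_2\cdots a_6t^{2n-2}$ and the same assumptions $|a_i|<1$, $|t|<1$, so we may simply divide the right-hand sides. This bypasses the conceptual route sketched in the text, namely that $J$ and $I$ satisfy a common rank-one $q$-difference system whose solutions differ only by a pseudo-constant. That route would be the natural one if the evaluation of $J$ were not yet known, but here Theorem \ref{thm:KBC} may be assumed.

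First, in the quotient the factors $(t;q)_\infty/(q,t^i;q)_\infty$ cancel for each $i$. In the denominator of the right-hand side of \eqref{eq:KBC}, split the product $\prod_{1\le j<k\le 6}(t^{i-1}a_ja_k;q)_\infty$ into the pairs with $k\le 5$ and the pairs $(k,6)$. The former cancel against Gustafson's denominator, read as $\prod_{1\le k<l\le 5}$. The $i$-th factor of $J/I$ is then
\[
\frac{\prod_{l=1}^2\theta(t^{i-1}a_lv;q)\prod_{3\le j<k\le 6}\theta(t^{i-1}a_ja_k;q)}
{\prod_{k=1}^5(t^{i-1}a_ka_6;q)_\infty\,(qa_6^{-1}a_k^{-1}t^{-(i-1)};q)_\infty}.
\]

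Second, observe that $qa_6^{-1}a_k^{-1}t^{-(i-1)}=q/(t^{i-1}a_ka_6)$. Hence each pair of factors in the denominator combines into $\theta(t^{i-1}a_ka_6;q)$ by the definition $\theta(u;q)=(u,q/u;q)_\infty$. The denominator therefore becomes $\prod_{k=1}^5\theta(t^{i-1}a_ka_6;q)$. In the numerator, $\prod_{3\le j<k\le 6}\theta(t^{i-1}a_ja_k;q)$ contains the three factors $\theta(t^{i-1}a_ja_6;q)$ for $j=3,4,5$, and these cancel with the corresponding denominator factors. What remains is $\prod_{l=1}^2\theta(t^{i-1}a_lv;q)\prod_{3\le j<k\le 5}\theta(t^{i-1}a_ja_k;q)$ over $\prod_{l=1}^2\theta(t^{i-1}a_la_6;q)$, which is exactly the claimed ratio.

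There is no real obstacle here. The only points needing care are bookkeeping: reading the index range in \eqref{eq:mNRint} as $1\le k<l\le 5$, and making sure the hypotheses of both formulas are met simultaneously, which is why the balancing condition is imposed. If one insisted on the difference-equation proof instead, the hard step would be computing the boundary behaviour that fixes the pseudo-constant, for example via residues at the poles introduced by $\theta(vz_i;q)$. The direct comparison above avoids this entirely.
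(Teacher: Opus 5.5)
Your proposal is correct and matches what the paper does: the proposition has no separate proof, and the stated ratio is exactly the quotient of the right-hand sides of \eqref{eq:KBC} (Theorem~\ref{thm:KBC}) and \eqref{eq:mNRint}, with the surrounding $q$-difference discussion serving only to explain why the quotient is a theta-function (pseudo-constant) factor. Your bookkeeping checks out, including reading the denominator range in \eqref{eq:mNRint} as $1\le k<l\le 5$ (the printed $k\le l$ is a typo) and using $(t^{i-1}a_ka_6;q)_\infty(qa_6^{-1}a_k^{-1}t^{-(i-1)};q)_\infty=\theta(t^{i-1}a_ka_6;q)$.
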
  
\par
This paper is organized as follows. 
In Section \ref{section:2}, we describe a method for degenerating the integral 
in the main theorem involving progressively specializing the parameters. 
Important for this purpose are the $q$-difference equations with respect to 
those parameters satisfied by the integral.
Ultimately, the proof of the main theorem reduces to an integral 
when the parameters are specialized in a convenient way, 
and the proof itself becomes clear from existing results.
In Section \ref{section:3}, we will discuss the properties of 
certain interpolation functions (fundamental invariants of type $BC$) and their zeros. 
These play an important role in finding the $q$-difference equation that the integral satisfies using Aomoto's method. 
In Section \ref{section:4}, we introduce Aomoto's method. This section constitutes the technical part of this paper, 
notwithstanding that in its simplest form it corresponds to integration by parts in calculus.
In Section \ref{section:5}, based on Aomoto's method, we will prove the remaining key lemma (Lemma \ref{lem:recurrenceJ}). 
In the Appendix \ref{appendix:A}, we explain how the integral (1.5) can be derived from (1.2).
%
%
\section{Residue calculation and steps to prove Theorem \ref{thm:KBC}}
\label{section:2}
The proof of Theorem \ref{thm:KBC} is carried out by reducing to successive special cases. 
In this section we explain these degenerations. 
\par
For $x=(x_{1},\ldots,x_{n})\in (\mathbb{C}^*)^n$ and $\nu=(\nu_{1},\ldots,\nu_{n})\in \mathbb{Z}^{n}$, 
we use the symbol $xq^{\nu}$ as 
$$
xq^{\nu}:=(x_{1}q^{\nu_{1}},\ldots,x_{n}q^{\nu_{n}})\in (\mathbb{C}^*)^n. 
$$
Let $\zeta_{j}$ $(j=0,1,\ldots,n)$ be the points in $(\mathbb{C}^*)^n$ defined by 
\begin{equation}
\label{eq:zeta_j}
\zeta_j
=(\underbrace{a_{1},a_{1}t,\ldots,a_{1} t^{j-1}\phantom{\Big|}\!\!}_j, 
\underbrace{a_{2},a_{2} t,\ldots,a_{2} t^{n-j-1}\phantom{\Big|}\!\!}_{n-j})\in (\mathbb{C}^*)^n.
\end{equation}
\begin{lem} 
\label{lem:J(v)/factor}
Let $J(v)$ be the left-hand side of \eqref{eq:KBC}. 
$J(v)$ is divisible by $\prod_{i=1}^{n}\theta(t^{i-1}a_{1}v,t^{i-1}a_{2}v;q)$, and 
$J(v)/\prod_{i=1}^{n}\theta(t^{i-1}a_{1}v,t^{i-1}a_{2}v;q)$ is independent of $v$.
\end{lem}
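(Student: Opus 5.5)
The plan is to study $J(v)$ as a function of $v$ through its quasi-periodicity under $v\to qv$. We then show it is a theta function with the same multiplier and the same zeros as $P(v):=\prod_{i=1}^{n}\theta(t^{i-1}a_{1}v,t^{i-1}a_{2}v;q)$, so that $J(v)/P(v)$ is an elliptic function without poles, hence constant.

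\emph{Step 1 (holomorphy and multiplier).} The parameter $v$ enters the integrand of \eqref{eq:KBC} only through $\prod_{i}\theta(cvz_i^{-1},vz_i;q)$ with $c=t^{n-1}a_1a_2$. On $\mathbb{T}^n$ this is entire in $v\in\mathbb{C}^*$, uniformly in $z$, so $J(v)$ is holomorphic on $\mathbb{C}^*$. By \eqref{eq:00quasi-period},
\[
\theta(cqv z^{-1},qvz;q)=\frac{z}{cv}\cdot\frac{1}{vz}\,\theta(cvz^{-1},vz;q)=\frac{1}{cv^2}\,\theta(cvz^{-1},vz;q).
\]
The factor $1/(cv^2)$ is independent of $z$, so $J(qv)=(cv^2)^{-n}J(v)$. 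The same computation for $P$ gives
\[
P(qv)=\prod_{i=1}^n\bigl(t^{2i-2}a_1a_2v^2\bigr)^{-1}P(v)=\bigl(t^{n(n-1)}a_1^na_2^nv^{2n}\bigr)^{-1}P(v)=(cv^2)^{-n}P(v).
\]
Hence $J/P$ is $q$-periodic, i.e.\ elliptic on $\mathbb{C}^*/q^{\mathbb{Z}}$. Its only possible poles are the zeros of $P$, namely $v\in q^{\mathbb{Z}}t^{-(i-1)}a_l^{-1}$ for $l=1,2$ and $i=1,\dots,n$. These are $2n$ points modulo $q^{\mathbb{Z}}$, and they are simple and distinct for generic $a_1,a_2,t$.

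\emph{Step 2 (vanishing).} It therefore suffices to prove $J(t^{-(i-1)}a_l^{-1})=0$ for generic parameters, and then conclude by continuity in the parameters. The case $l=2$ is symmetric to $l=1$: the substitution $v\to c/v$, $z\to z$ interchanges the two theta factors, so we may take $l=1$.

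To prove the vanishing, write the integrand as $\Delta(z)\,U(z;v)$. Here $\Delta$ is the Gustafson weight of \eqref{eq:mNRint}, which is invariant under each $z_i\to z_i^{-1}$ and under permutations. The factor $U$ is the quotient of the two integrands; as the introduction remarks, it is a pseudo-constant, i.e.\ $q$-periodic in each $z_i$, and it is a ratio of theta functions containing $\prod_i\theta(cvz_i^{-1},vz_i;q)$. Symmetrizing under the hyperoctahedral group gives
\[
J(v)=\frac{1}{2^nn!}\int_{\mathbb{T}^n}\Delta(z)\sum_{w\in W(BC_n)}U(wz;v)\,\varpi(z).
\]
So it suffices to show that $\sum_{w}U(wz;v)$ vanishes identically in $z$ at $v=t^{-(i-1)}a_1^{-1}$.

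\emph{Step 3 (theta identity).} The one-variable pieces are elliptic in $z_i$, so the required vanishing reduces to a theta function identity of Weierstrass addition type (the three-term relation $\theta(xy,x/y,uw,u/w)-\theta(xw,x/w,uy,u/y)=\frac{u}{y}\theta(yw,y/w,xu,x/u)$). The identity must be applied coordinatewise and combined with the $t$-dependent cross factors.

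I expect this step to be the main obstacle, because the cross terms in $t$ do not factor coordinatewise. If the direct identity proves unwieldy, I would fall back on a residue argument. Shrink the contour past the poles $z_j=a_1t^{k}q^{m}$ and observe that at $v=t^{-(i-1)}a_1^{-1}$ the factor $\theta(vz_j)$ cancels exactly the residues along the chain $\zeta_j$ of \eqref{eq:zeta_j} that would otherwise contribute. This is where the points $\zeta_j$ should enter.
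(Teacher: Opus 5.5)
There is a genuine gap: your Step~2 reduction asks for a pointwise identity that is false, so the theta-identity route of Step~3 cannot succeed.

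\textbf{What is fine.} Step~1 is correct. You have $J(qv)=(cv^2)^{-n}J(v)$ with the same multiplier as $\prod_i\theta(t^{i-1}a_1v,t^{i-1}a_2v;q)$, so the lemma does reduce to showing that $J$ vanishes at $v=t^{-(i-1)}a_l^{-1}$. (Minor slip: the involution that swaps the two theta factors is $v\mapsto q/(cv)$, not $v\mapsto c/v$. More simply, use the $a_1\leftrightarrow a_2$ symmetry of the integrand.)

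\textbf{Why Step 2 fails.} Since $\Delta$ is $W$-invariant, $\Delta(z)\sum_w U(wz;v)=\sum_w\tilde\Psi(wz)$. So you are asking for $\sum_w\tilde\Psi(wz)\equiv 0$ at $v=t^{-(i-1)}a_1^{-1}$, and this already fails for $n=1$, $v=a_1^{-1}$. There $\theta(a_2/z,z/a_1;q)$ cancels $(a_1/z,a_2/z;q)_\infty$, and the integrand becomes
\[
\tilde\Psi(z)=-\frac{z}{a_1}\,\frac{(z^2,qz/a_1,qz/a_2;q)_\infty}{(qz^2,a_1z,a_2z;q)_\infty}\prod_{k=3}^{6}\frac{(qz/a_k;q)_\infty}{(a_kz;q)_\infty}.
\]
This function is holomorphic on $|z|\le1$ and divisible by $z$, which is exactly why $J(a_1^{-1})=0$. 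However, $\tilde\Psi(z^{-1})$ has a genuine pole at $z=a_3$ coming from $(a_3z^{-1};q)_\infty$, while $\tilde\Psi(z)$ is regular there. Hence $\tilde\Psi(z)+\tilde\Psi(z^{-1})\not\equiv0$. The vanishing of $J$ depends on which poles lie inside the cycle; it is not an algebraic cancellation under $W$, so no addition-type theta identity can deliver it.

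\textbf{The residue fallback.} This is the right mechanism and is precisely the paper's proof, but you only gesture at it, and your description is inaccurate in two ways:
\begin{itemize}
\item The poles swept out are not only $z_j=a_1t^kq^m$. They are all the points $\zeta_jq^\nu$, $\nu\in\Lambda_j$, built from both the $a_1$-strings and the $a_2$-strings.
\item For the chains $\zeta_j$ with $j<i_0$, the factor that kills the residue is $\theta(t^{n-1}a_1a_2vz^{-1};q)$ evaluated at an $a_2$-coordinate, not $\theta(vz_j;q)$.
\end{itemize}

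\textbf{How the paper carries it out.}
\begin{enumerate}
\item It passes to the cycle $\tilde{\mathbb T}^n$ defined by $|z_1|=1$ and $|z_i/z_{i-1}|=1$.
\item It factors $\tilde\Psi=P(z;v)Q(z)$, where $P$ is $q$-periodic in every $z_i$ and carries all the $v$-dependence, and $Q$ carries no $v$.
\item It shrinks the cycle; the integral over the shrunken cycle tends to $0$ by the estimate of Remark~2.
\item This gives
\[
J(v)=\sum_{j=0}^n\mathop{\mathrm{Res}}_{z=\zeta_j}P(z;v)\tilde\varpi(z)\sum_{\nu\in\Lambda_j}Q(\zeta_jq^\nu),
\]
and the $v$-dependent factor of every residue evaluates to $\prod_i\theta(t^{i-1}a_1v,t^{i-1}a_2v;q)$, independent of $j$.
\end{enumerate}
This yields the factorization directly. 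Once you do this computation, your Step~1 and the zero-counting are no longer needed.
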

%
\proof 
Let $\mathfrak{S}_{n}$ be the symmetric group on $\{1,2,\ldots, n\}$. 
For an arbitrary point 
$\xi=(\xi_{1},\xi_{2},\ldots, \xi_{n})\in (\mathbb{C}^*)^n$ and $\sigma\in \mathfrak{S}_{n}$, 
we define the point $\sigma\xi$ by 
$\sigma\xi=(\xi_{\sigma(1)},\xi_{\sigma(2)},\ldots, \xi_{\sigma(n)})$, and 
for $L\subset (\mathbb{C}^*)^n$ we simply write $\sigma L:=\{\sigma \xi\,|\,\xi\in L\}$. 
\par
We denote by $\tilde\Psi(z)$ the integrand of $J(v)$.  Since $\tilde\Psi(z)$ is $\mathfrak{S}_{n}$-symmetric, we have 
$$
J(v)=\frac{1}{n!}\int_{\mathbb{T}^{n}}\tilde\Psi(z)\varpi(z)=
\frac{1}{n!}\sum_{\sigma\in\mathfrak{S}_{n}}\int_{\sigma\tilde{\mathbb{T}}^{n}}\tilde\Psi(z)\varpi(z)
=
\frac{1}{n!}\sum_{\sigma\in\mathfrak{S}_{n}}\int_{\tilde{\mathbb{T}}^{n}}\sigma\tilde\Psi(z)\varpi(z)
=\int_{\tilde{\mathbb{T}}^{n}}\tilde\Psi(z)\varpi(z),
$$
where the integration cycle $\tilde{\mathbb{T}}^{n}$ is 
$$\tilde{\mathbb{T}}^{n}
=\{(z_1,\ldots,z_n)\in \mathbb{C}^n\,|\, |z_1|=1\ \mbox{and}\ |z_{i}/z_{i-1}|=1\ (i=2,\ldots,n)\}.
$$
Since the measure $\varpi(z)$ is equal to 
$$
\tilde\varpi(z)=
\frac{1}{(2\pi\sqrt{-1})^n}
\frac{dz_{1}}{z_{1}}
\frac{d(z_{2}/z_{1})}{z_{2}/z_{1}}
\frac{d(z_{3}/z_{2})}{z_{3}/z_{2}}
\cdots \frac{d(z_{n}/z_{n-1})}{z_{n}/z_{n-1}},
$$
we have 
$$
J(v)=\int_{\tilde{\mathbb{T}}^{n}}\tilde\Psi(z)\tilde\varpi(z).
$$
\par
Now we calculate the residue series in the region 
$|z_{1}|<1, |z_{2}/z_{1}|<1,\ldots, |z_{n}/z_{n-1}|<1$. 
For this, we temporarily assume $|a_{2}|<|a_{1}|$. 
We set $\tau$ as $t=q^{\tau}$. 
The integrand $\tilde\Psi(z)$ of $J(v)$ can be factored as 
$\tilde\Psi(z)=P(z;v)Q(z)$, where
\begin{align}
P(z;v)&=\prod_{i=1}^{n}
\frac{\theta(t^{n-1}a_{1}a_{2}vz_{i}^{-1},vz_{i};q)}
{z_{i}^{1+2\tau(n-i)}\theta(a_{1}z_{i}^{-1},a_2z_{i}^{-1};q)}
\prod_{1\le j< k\le n}
\frac{\theta(z_{j}/z_{k};q)}
{\theta(tz_{j}/z_{k};q)},
\label{eq:P(z;v)}\\
\begin{split}
Q(z)&=\prod_{i=1}^{n}
z_{i}^{1+2\tau(n-i)}(1-z_{i}^{2})
\prod_{k=1}^{6}
\frac{(qa_{k}^{-1}z_{i};q)_{\infty}}{(a_{k}z_{i};q)_{\infty}}
\\&\qquad
\times\prod_{1\le j< k\le n}
(1-z_{k}/z_{j})(1-z_{j}z_{k})\frac{(qt^{-1}z_{k}/z_{j},qt^{-1}z_{j}z_{k};q)_{\infty}}
{(tz_{k}/z_{j},tz_{j}z_{k};q)_{\infty}}.
\end{split}
\label{eq:Q(z)}
\end{align}
$P(z;v)$ is 
$q$-periodic with respect to each variable $z_i$. 
Taking into account the condition $|a_{1}|<1, \ldots,$ $|a_{6}|<1$ and $|t|<1$, 
the set of poles lying on the set 
$\prod_{i=1}^{n}(a_{1}z_{i}^{-1},a_{2}z_{i}^{-1};q)_{\infty}
\prod_{1\le j< k\le n}(tz_{j}/z_{k};q)_{\infty}=0$ is within the specified region. 
The set of poles can be written as 
$
\bigcup_{j=0}^{n}\{\zeta_{j}q^{\nu}\,|\,\nu\in \Lambda_{j}\}$, where 
\begin{equation*}
\Lambda_{j}:=
\bigg\{\nu=(\nu_{1},\ldots,\nu_{n})\in \mathbb{Z}^{n}\,\bigg|\,
\substack{\displaystyle 0\le \nu_{1}\le \nu_{2}\le \cdots\le \nu_{j}\\[2pt]
\displaystyle 0\le \nu_{j+1}\le \nu_{j+2}\le \cdots\le \nu_{n}}
\bigg\}.
\end{equation*}
Therefore the integral $J(v)$ is written as the sum of residues as
\begin{equation}
\label{eq:ResidueKBC}
\begin{split}
J(v)&=\int_{\tilde{\mathbb{T}}^{n}}P(z;v)Q(z)\tilde\varpi(z)
=\sum_{j=0}^{n}\sum_{\nu\in \Lambda_{j}}
\mathop{\mathrm{Res}}_{z=\zeta_jq^{\nu}}P(z;v)Q(z)\tilde\varpi(z)
\\&
=\sum_{j=0}^{n}
\mathop{\mathrm{Res}}_{z=\zeta_j}P(z;v)\tilde\varpi(z)\sum_{\nu\in \Lambda_{j}}Q(\zeta_jq^{\nu}).
\end{split}
\end{equation}
Since the factor in $P(z;v)$ that contains $v$ is $\prod_{i=1}^{n}\theta(t^{n-1}a_{1}a_{2}vz_{i}^{-1},vz_{i};q)$
in the numerator of $P(z;v)$ only, 
the factor in 
$
\mathop{\mathrm{Res}}\limits_{z=\zeta_j}P(z;v)\varpi(z)
$ that contains $v$ is $\prod_{i=1}^{n}\theta(t^{n-1}a_{1}a_{2}vz_{i}^{-1},vz_{i};q)\big|_{z=\zeta_{j}}$ only, 
and is calculated as  
\begin{align*}
&\prod_{i=1}^{n}\theta(t^{n-1}a_{1}a_{2}vz_{i}^{-1},vz_{i};q)\big|_{z=\zeta_{j}}\\
&=
\prod_{i=1}^{j}\theta(t^{n-1}a_{1}a_{2}v(a_{1}t^{i-1})^{-1},va_{1}t^{i-1};q)
\prod_{i=1}^{n-j}\theta(t^{n-1}a_{1}a_{2}v(a_{2}t^{i-1})^{-1},va_{2}t^{i-1};q)\\
&=\prod_{i=1}^{j}\theta(t^{n-i}a_{2}v,t^{i-1}a_{1}v;q)
\prod_{i=1}^{n-j}\theta(t^{n-i}a_{1}v,t^{i-1}a_{2}v;q)
=\prod_{i=1}^{n}\theta(t^{i-1}a_{1}v,t^{i-1}a_{2}v;q).
\end{align*}
Since this factor is independent of $j$ $(j=0,\ldots,n)$, $J(v)$ is divisible by $\prod_{i=1}^{n}\theta(t^{i-1}a_{1}v,t^{i-1}a_{2}v;q)$, 
and $J(v)/\prod_{i=1}^{n}\theta(t^{i-1}a_{1}v,t^{i-1}a_{2}v;q)$ is independent of $v$. 
With this established, the assumption $|a_{2}|<|a_{1}|$ can be removed, 
giving the result as stated.  
\qed
\medskip
\noindent
{\bf Remark 1.} From the residue series expression \eqref{eq:ResidueKBC}, the formula \eqref{eq:KBC} in Theorem \ref{thm:KBC} is equivalent to the identity
\begin{equation}
\label{eq:ResidueKBC2}
\sum_{j=0}^{n}
\mathop{\mathrm{Res}}_{z=\zeta_j}P(z;v)\tilde\varpi(z)\sum_{\nu\in \Lambda_{j}}Q(\zeta_jq^{\nu})
=\prod_{i=1}^{n}\frac{(t;q)_{\infty}\prod_{l=1}^{2}\theta(t^{i-1}a_{l}v;q)\prod_{3\le j<k\le 6}\theta(t^{i-1}a_{j}a_{k};q)}
{(q,t^{i};q)_{\infty}\prod_{1\le j<k\le 6}(t^{i-1}a_{j}a_{k};q)_{\infty}},
\end{equation}
under the condition $q=a_{1}a_{2}a_{3}a_{4}a_{5}a_{6}t^{2n-2}$. 
In particular, if $n=1$, 
$\displaystyle \mathop{\mathrm{Res}}_{z=\zeta_j}P(z;v)\tilde\varpi(z)$ and 
$\sum\limits_{\nu\in \Lambda_{j}}Q(\zeta_jq^{\nu})$\\[-8pt] $(j=0,1)$ 
coincide with $\displaystyle \mathop{\mathrm{Res}}_{z=a_{i}}P(z;v)\textstyle\frac{dz}{2\pi \sqrt{-1}z}$ 
and 
$\textstyle \sum\limits_{\nu=0}^{\infty}Q(a_{i}q^{\nu})$ $(i=2,1)$, respectively.
According to \eqref{eq:P(z;v)} and \eqref{eq:Q(z)} 
we have 
$\displaystyle
\mathop{\mathrm{Res}}_{z=a_{i}}P(z;v)\textstyle\frac{dz}{2\pi \sqrt{-1}z}
=\frac{(-1)^{i-1}\theta(a_{1}v,a_{2}v;q)}{(q;q)^{2}a_{1}\theta(a_{2}/a_{1};q)}
$ $(i=1,2)$
and the latter are expressed
 by the basic hypergeometric series
$$
\sum_{\nu=0}^{\infty}Q(a_i q^{\nu})=
{}_8\psi_{8}\bigg[
\begin{array}{c}
  q\xi,-q\xi,a_{1}\xi,a_{2}\xi, \ldots,a_{6}\xi\\
  \xi,-\xi,q\xi/a_{1},q\xi/a_{2},\ldots,q\xi/a_{6}
\end{array}
;q,q
\bigg]\, 
\xi(1-\xi^{2})\prod_{m=1}^{6}\frac{(q\xi/a_{m};q)_{\infty}}{(a_{m}\xi;q)_{\infty}}
\bigg|_{\xi=a_{i}}
$$
for $i=1,2$. We can confirm that \eqref{eq:ResidueKBC2} for the case $n=1$ then coincides exactly with 
Bailey's ${}_8\phi_{7}$ summation formula \cite[p.\,54 (2.11.7)]{GR} 
of Gasper--Rahman's book by substitution with
$a_{1}^{2}=a$, $a_{1}a_{2}=b$, $a_{1}a_{3}=c$, $a_{1}a_{4}=d$, $a_{1}a_{5}=e$ and  $a_{1}a_{6}=f$. 
As a consequence it can be seen that Gasper's extended Askey--Roy formula and Bailey's ${}_8\phi_{7}$ summation formula, while different in appearance, are equivalent.
\\[10pt]
\medskip
\noindent
{\bf Remark 2.} A technical point of detail passed over in the above is that in the residue calculation \eqref{eq:ResidueKBC}, 
for convergence of the series
$\sum_{j=0}^{n}\sum\limits_{\nu\in \Lambda_{j}}
\displaystyle\mathop{\mathrm{Res}}_{z=\zeta_jq^{\nu}}P(z;v)Q(z)\tilde\varpi(z)$ one requires 
\begin{equation}
\label{eq:J_epsilon}
J_{\epsilon}:=\int_{\tilde{\mathbb{T}}_{\epsilon}\hspace{-0.5pt}{\phantom{\bar{i}}\!\!\!\!}^{n}}P(z;v)Q(z)\tilde\varpi(z)\to 0 \quad (\epsilon \to 0),
\end{equation}
where 
$\tilde{\mathbb{T}}_{\epsilon}\hspace{-6.5pt}{\phantom{|}}^{n}
=\{(z_1,\ldots,z_n)\in \mathbb{C}^n\,|\, |z_1|=\epsilon\ \mbox{and}\ |z_{i}/z_{i-1}|=\epsilon\ (i=2,\ldots,n)\}
$. 
This can be confirmed as follows. 
We first take $\epsilon>0$ as $\epsilon=\tilde\epsilon q^{N}$, 
where $\tilde\epsilon$ is fixed as $0<\tilde\epsilon<1$ and $N$ is a positive integer. 
Since $P(z;v)$ given in \eqref{eq:P(z;v)} is a continuous function on the compact set 
$\tilde{\mathbb{T}}_{\tilde\epsilon}\hspace{-7pt}{\phantom{\bar{l}}}^{n}$,
$|P(z;v)|$ is bounded on $\tilde{\mathbb{T}}_{\tilde\epsilon}\hspace{-7pt}{\phantom{\bar{l}}}^{n}$,
i.e., there exists $C_{1}>0$ such that $|P(z;v)|<C_{1}$ for 
$z\in \tilde{\mathbb{T}}_{\tilde\epsilon}\hspace{-7pt}{\phantom{\bar{l}}}^{n}$. Moreover, 
since $P(z,v)$ is $q$-periodic with respect to $z$, for $\nu\in \mathbb{Z}^{n}$ we have 
$|P(zq^{\nu},v)|=|P(z,v)|<C_{1}$. 
In particular, taking $\nu=m(1,2,\ldots,n)\in \mathbb{Z}^{n}$, where $m\in \mathbb{Z}$, 
we see $|P(z,v)|<C_{1}$ for $z\in \bigcup_{m=0}^{\infty}
\tilde{\mathbb{T}}_{\tilde\epsilon q^{m}}\hspace{-18pt}{\phantom{\bar{l}}}^{n}\hspace{9pt}
$. On the other hand 
$|Q(z)/z_{1}\cdots z_{n}|$ is also bounded, i.e., 
there exists $C_{2}>0$ such that  $|Q(z)/z_{1}\cdots z_{n}|<C_{2}$, 
because by the definition \eqref{eq:Q(z)} we see 
$Q(z)\sim \prod_{i=1}^{n}z_{i}^{1+2\tau(n-i)}$ as 
$|z_{1}|\to 0$ and $|z_{i}/z_{i-1}|\to 0$ $(i=2,\ldots,n)$.
Thus, setting $C=C_{1}C_{2}$, we have $|P(z;v)Q(z)|<C|z_{1}\cdots z_{n}|$ 
for $z\in \bigcup_{m=0}^{\infty}
\tilde{\mathbb{T}}_{\tilde\epsilon q^{m}}\hspace{-18pt}{\phantom{\bar{l}}}^{n}\hspace{9pt}
$. 
Hence for $\epsilon=\tilde\epsilon q^{N}$ we obtain
\begin{align*}
&|J_{\epsilon}|
\le 
\frac{1}{|2\pi\sqrt{-1}|{\phantom{\bar{i}}\hspace{-4pt}}^{n}}
\int_{\tilde{\mathbb{T}}_{\epsilon}\hspace{-0.5pt}{\phantom{\bar{i}}\!\!\!\!}^{n}}|P(z;v)Q(z)|
\Big|\frac{dz_{1}}{z_{1}}\Big|
\Big|\frac{d(z_{2}/z_{1})}{z_{2}/z_{1}}\Big|
\cdots \Big|\frac{d(z_{n}/z_{n-1})}{z_{n}/z_{n-1}}\Big|
\\
&\quad <\frac{C}{(2\pi)^n}\int_{|z_{1}|=\epsilon}\!\!\!\!\!\!|z_{1}|^{n-1}|dz_{1}|
\int_{|z_{2}/z_{1}|=\epsilon}\!\!\!\!\!\!|z_{2}/z_{1}|^{n-2}|d(z_{2}/z_{1})|
\cdots
\int_{|z_{n}/z_{n-1}|=\epsilon}\!\!\!\!\!\!|d(z_{n}/z_{n-1})|\\
&\quad =C\epsilon^{n(n+1)/2}\to 0 \quad(\epsilon\to 0),
\end{align*}
which proves \eqref{eq:J_epsilon}.\\
\par
As is seen in Lemma \ref{lem:J(v)/factor}, $J(v)/\prod_{i=1}^{n}\theta(t^{i-1}a_{1}v,t^{i-1}a_{2}v;q)$ is independent of $v$, and so
if we can evaluate $J(v)$ when $v=a_{6}$, then we have 
$$
J(v)=J(a_{6})\prod_{i=1}^{n}\frac{\theta(t^{i-1}a_{1}v,t^{i-1}a_{2}v;q)}{\theta(t^{i-1}a_{1}a_{6},t^{i-1}a_{2}a_{6};q)}.
$$
Therefore, once the value of $J(a_{6})$ is known specifically as \eqref{eq:KBCv=a6} below, then the proof of Theorem \ref{thm:KBC} is complete.
\begin{lem}
\label{lem:KBCv=a6}
Suppose that $|a_{i}|<1$ $(i=1,\ldots, 5)$ and $|t|<1$.
Under the balancing condition $q=a_{1}a_{2}a_{3}a_{4}a_{5}a_{6}t^{2n-2}$, we have 
\begin{equation}
\label{eq:KBCv=a6}
\begin{split}
&\frac{1}{n!}\int_{\mathbb{T}^{n}}
\prod_{i=1}^{n}
\bigg[
\frac{(z_{i}^{2};q)_{\infty}}
{(qz_{i}^{2};q)_{\infty}}
\frac{\theta(t^{n-1}a_{1}a_{2}a_{6}z_{i}^{-1},a_{6}z_{i};q)}{\prod_{j=1}^{2}(a_jz_{i},a_jz_{i}^{-1};q)_{\infty}}
\prod_{k=3}^{6}\frac{(qa_{k}^{-1}z_{i};q)_{\infty}}{(a_{k}z_{i};q)_{\infty}}
\bigg]
\\&\qquad
\times\prod_{1\le j< k\le n}
\frac{(z_{j}z_{k};q)_{\infty}}{(qz_{j}z_{k};q)_{\infty}}
\frac{(z_{j}/z_{k},z_{k}/z_{j};q)_{\infty}}
{(tz_{j}/z_{k},tz_{k}/z_{j};q)_{\infty}}
\frac{(qt^{-1}z_{j}z_{k};q)_{\infty}}
{(tz_{j}z_{k};q)_{\infty}}
\,\varpi(z)
\\[5pt]
&\quad =
\prod_{i=1}^{n}\frac{(t;q)_{\infty}\prod_{l=1}^{2}\theta(t^{i-1}a_{l}a_{6};q)\prod_{3\le j<k\le 6}\theta(t^{i-1}a_{j}a_{k};q)}
{(q,t^{i};q)_{\infty}\prod_{1\le j<k\le 6}(t^{i-1}a_{j}a_{k};q)_{\infty}}.
\end{split}
\end{equation}
\end{lem}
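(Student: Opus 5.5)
The plan is to combine the residue expansion \eqref{eq:ResidueKBC} with a first-order $q$-difference equation in the parameters, which I expect Aomoto's method to supply. Write $J$ for the left-hand side of \eqref{eq:KBCv=a6} and $R$ for the right-hand side. First I would simplify the integrand at $v=a_6$. Since $\theta(a_6z_i;q)(qa_6^{-1}z_i;q)_\infty/(a_6z_i;q)_\infty=(q/a_6z_i,\,qa_6^{-1}z_i;q)_\infty$, the $a_6$-dependence becomes $z\leftrightarrow z^{-1}$ symmetric. Using the balancing condition $t^{n-1}a_1a_2a_6=q/(a_3a_4a_5t^{n-1})$ together with \eqref{eq:00quasi-period}, the other theta factor becomes $\theta(a_3a_4a_5t^{n-1}z_i;q)$. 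The integrand then differs from Gustafson's integrand in \eqref{eq:mNRint} only by a factor that is $q$-periodic in each $z_i$. Consequently $J$ and Gustafson's integral $I$ should satisfy the same $q$-difference system in the parameters.

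The core step, which is the key lemma I would prove by Aomoto's method (Lemma \ref{lem:recurrenceJ}), is the following. Take the shift $T:a_k\to qa_k$, $a_l\to a_l/q$ for a pair $k\neq l$ chosen among $3,\dots,6$ (with $a_6$ entering through the simplified form), so that the balancing condition is preserved. I aim to show that $TJ/J=TR/R$. To do this I would build an explicit symmetric function $\varphi(z)$ from the fundamental invariants of type $BC$. Its interpolation property at the points $\zeta_j$ should force $\nabla\varphi=\varphi-(\text{$q$-shift of }\varphi\text{ by the integrand ratio})$ to be a linear combination of just two functions, whose integrals are $J$ and $TJ$ up to explicit factors. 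Because $\int\nabla\varphi\,\Phi\,\varpi=0$ (Lemma \ref{lem:nabla varphi=0}; the pseudo-constant is $q$-periodic, so the shift of the torus is harmless), this yields the two-term relation. I would then compute the coefficients by evaluating at the zeros $\zeta_j$ of the invariants. This coefficient matching is where I expect the main obstacle: the zeros of the $BC$ invariants have to be placed exactly where the $\theta$-factors of the pseudo-constant do not interfere.

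Given the recurrences, $J/R$ is invariant under the shifts $a_k\to qa_k$, $a_l\to a_l/q$ ($3\le k,l\le 6$) within the balanced variety. To pin it down, I would use the residue expression of Remark 1, i.e.\ \eqref{eq:ResidueKBC2}, and iterate a shift so that one $a_k$ ($3\le k\le5$) tends to $0$ while $a_6$ grows. In that limit the sums $\sum_{\nu\in\Lambda_j}Q(\zeta_jq^\nu)$ become explicitly computable: they degenerate to the known evaluation in \eqref{eq:main} (the Tarasov--Varchenko-type case treated in \cite{FIN2026}). I would control the passage to the limit with the bound of Remark 2, uniformly in the parameters. If this limit proves awkward, the fallback is to compare with Gustafson's evaluation \eqref{eq:mNRint} of $I$. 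Since $J/I$ is annihilated by the same rank-one system, it is an elliptic-type pseudo-constant in the parameters. I would then compute it by comparing the leading residues at $\zeta_0$ (or at a specialization such as $a_3a_1t^{n-1}q^{m}=q$, where the factors $(qa_3^{-1}z_i;q)_\infty$ truncate the residue series to a single term). Once either route succeeds, $J=R$ follows and, with Lemma \ref{lem:J(v)/factor}, Theorem \ref{thm:KBC} follows.
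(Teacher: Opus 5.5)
Your $q$-difference step is sound in principle. The Aomoto argument of Section~\ref{section:4} is symmetric in $a_1,\ldots,a_6$. So fundamental invariants interpolating at points built from whichever pair you shift give a two-term recurrence; the paper does this by replacing $a_2$ by $a_6$ in $\zeta_j$. This requires the auxiliary condition $a_1\cdots a_6t^{2n-2}=1$ followed by the shift $a_6\to q^{-1}a_6$, as in Section~\ref{section:5}.

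The gap is the base case. You shift only within $a_3,\ldots,a_6$ and then let some $a_k\to0$ ($3\le k\le5$) with $a_6\to\infty$. This limit does nothing to $a_1$ and $a_2$, so both families of poles inside $\mathbb{T}^n$ survive. Consequently:
\begin{itemize}
\item the residue expansion \eqref{eq:ResidueKBC} still runs over all $n+1$ families $\zeta_jq^{\nu}$, $\nu\in\Lambda_j$, including the mixed ones $0<j<n$;
\item $Q(z)$ keeps its $BC_n$ factors $(1-z_i^2)$ and $(1-z_jz_k)(qt^{-1}z_jz_k;q)_\infty/(tz_jz_k;q)_\infty$, which involve neither $a_k$ nor $a_6$;
\item $(qa_k^{-1}z_i;q)_\infty$ and $\theta(t^{n-1}a_3a_4a_5z_i;q)$ diverge, so they must be renormalized by an $N$-dependent constant, which leaves a new theta factor in each $z_i$.
\end{itemize}
The limiting integral is therefore of the same kind as the one you started from. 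It is not \eqref{eq:main}: that formula has only $A$-type cross terms, and it is reached only through the rescaling $z_i\to\varepsilon z_i$, $\varepsilon\to0$, of Corollaries~\ref{coro:main-2} and~\ref{coro:main}, which your limit does not perform. So your recurrence has no evaluated endpoint.

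The missing idea is to pair $a_6$ with $a_1$, one of the two parameters producing the enclosed poles. Lemma~\ref{lem:recurrenceJ} is exactly the recurrence for $a_1\to qa_1$, $a_6\to q^{-1}a_6$. When you iterate it, $a_3a_4a_5$ stays fixed, so $\theta(t^{n-1}a_1a_2a_6z_i^{-1};q)=\theta(t^{n-1}a_3a_4a_5z_i;q)$ is unchanged. At the same time $(a_1z_i,a_1z_i^{-1};q)_\infty\to1$ and $(qa_6^{-1}z_i^{-1},qa_6^{-1}z_i;q)_\infty\to1$. The limit \eqref{eq:KBClimit} therefore exists without renormalization, and its enclosed poles come from $a_2$ alone. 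The residue sum collapses to the single family $\zeta=(a_2,a_2t,\ldots,a_2t^{n-1})$, $\nu\in\Lambda$. That is van Diejen's truncated Macdonald-type sum, which has a known product evaluation and gives \eqref{eq:MacSum}.

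Your fallback does not close the gap either:
\begin{itemize}
\item $J/I$ is only known to be invariant under $q$-shifts of the parameters, so comparing a single residue cannot determine it.
\item The specialization $a_1a_3t^{n-1}q^m=q$ does not make $(qa_3^{-1}z_i;q)_\infty$ vanish at the points $\zeta_jq^{\nu}$ for generic $a_1,a_2,t$, so it does not truncate the residue series.
\item Even a closed form on such a special subvariety would not determine a function known only to be invariant under these shifts.
\end{itemize}
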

\par 
Note that by setting $v=a_{6}$ in Theorem \ref{thm:KBC}, 
the restriction $|a_{6}|<1$ on $a_{6}$ in the latter can be removed. 
The proof of this lemma 
proceeds via the following $q$-difference equation, 
which is key in this paper.
\begin{lem} 
\label{lem:recurrenceJ}
Let $\mathcal{J}(a_{1},\ldots,a_{6})$ be the left-hand side of \eqref{eq:KBCv=a6}. 
Under the balancing condition $q=a_{1}a_{2}a_{3}a_{4}a_{5}a_{6}t^{2n-2}$, we have 
\begin{equation}
\label{eq:recurrenceJ}
\frac{\mathcal{J}(a_{1},a_{2},\ldots,a_{5},a_{6})}
{\mathcal{J}(qa_{1},a_{2},\ldots,a_{5},q^{-1}a_{6})}
=\prod_{i=1}^{n}\prod_{k=2}^{5}\frac{1-qa_{k}^{-1}a_{6}^{-1}t^{-(i-1)}}{1-a_{1}a_{k}t^{i-1}}.
\end{equation}
\end{lem}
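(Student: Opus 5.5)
We prove \eqref{eq:recurrenceJ} by Aomoto's method: exhibit an explicit $\mathfrak S_n$-symmetric function $\varphi(z)$ whose $q$-difference $\nabla\varphi$ integrates to zero, and such that $\nabla\varphi$ is a linear combination of two functions whose integrals are exactly the two sides of the ratio. First we split the integrand of $\mathcal J$ into a $q$-periodic part and a ``gamma-type'' part. The factor $\theta(t^{n-1}a_1a_2a_6z_i^{-1},a_6z_i;q)$ combines with $(qa_6^{-1}z_i;q)_\infty/(a_6z_i;q)_\infty$ to leave a $q$-periodic remainder. Write the integrand as $\Phi(z)\Delta(z)$, where $\Phi(z)=\prod_i\prod_{k=1}^6 (qa_k^{-1}z_i;q)_\infty/(a_kz_i;q)_\infty\prod_{j<k}(qt^{-1}z_jz_k;q)_\infty/(tz_jz_k;q)_\infty$ times the $BC$-type factors. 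Then $\Delta$ is $BC_n$-skew and $\Phi$ is quasi-symmetric under $z_i\to z_i^{-1}$ up to a pseudo-constant. We define the operator $\nabla\varphi(z)=\varphi(z)-T_{z_1}(\Phi)(z)/\Phi(z)\,\varphi(qz_1,\dots)$ (symmetrized), where $T_{z_1}$ is the $q$-shift in $z_1$. Lemma \ref{lem:nabla varphi=0}, which is referred to in the introduction, gives $\int_{\mathbb T^n}\nabla\varphi\,\Phi\Delta\,\varpi=0$ for suitable $\varphi$, provided no poles are crossed when the cycle is shifted. The $q$-periodic part, including the $v$-dependence, is inert in this step.

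Next we choose $\varphi$. Shifting $a_1\to qa_1$, $a_6\to q^{-1}a_6$ multiplies the integrand by $\prod_i(1-a_1z_i)(1-a_1z_i^{-1})/\bigl((1-a_6z_i/q)(1-a_6z_i^{-1}/q)\bigr)$, up to the theta-factor bookkeeping. The periodic part cancels after using \eqref{eq:00quasi-period} and the balancing condition. So we need to relate $\int\prod_i(1-a_1z_i)(1-a_1/z_i)\,\Phi\Delta\,\varpi$ with $\int\prod_i(1-a_6z_i/q)(1-a_6/(qz_i))\,\Phi\Delta\,\varpi$. The natural tool is the fundamental invariants of type $BC$, i.e. the interpolation functions $e_r(a;z)$ and $e_r(b;z)$ in $x_i=z_i+z_i^{-1}$ from Section \ref{section:3}. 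These are characterised by vanishing at points of the form $(a,at,\dots,at^{r-1},\dots)$. We take $\varphi(z)=\prod_i z_i^{-1}(1-a_6z_i/q)\cdots$, more precisely a product $\prod_i(1-b z_i)(1-bz_i^{-1})$-type kernel with a single unbalanced factor, chosen so that $\nabla\varphi$ is $BC_n$-symmetric of degree $\le1$ in each $x_i$. The space of such functions is spanned by $\prod_i(x_i-\alpha)$ for various $\alpha$. Using the interpolation property, we then compute the coefficients of $\nabla\varphi$ in the basis $\{\prod_i(1-a_1z_i)(1-a_1z_i^{-1}),\ \prod_i(1-a_6z_i/q)(1-a_6z_i^{-1}/q)\}$ by evaluating at the special points $\zeta=(a_1,a_1t,\dots,a_1t^{n-1})$ and its $a_6$-analogue.

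At those evaluation points the products telescope, giving $\prod_{i}\prod_{k=2}^5(1-a_1a_kt^{i-1})$ on one side and $\prod_i\prod_{k=2}^5(1-qa_k^{-1}a_6^{-1}t^{-(i-1)})$ on the other, up to a common factor. Integrating then yields \eqref{eq:recurrenceJ}.

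The main obstacle is the middle step. We must verify that $\nabla\varphi$ really lies in the two-dimensional span, which requires the balancing condition $q=a_1\cdots a_6t^{2n-2}$ to kill the top-degree term, and we must carry out the evaluation at the interpolation points. I would first do $n=1$ by hand as a check against Bailey's ${}_8\phi_7$ contiguity, using Remark 1. For general $n$, I would use the known explicit formula for $\nabla$ applied to $\prod_i z_i^{-1}\prod_{k}(1-a_kz_i)$-type functions from the Gustafson case in \cite{Ito2011,INIntBC}. The same algebra applies there, since the integrands differ only by a pseudo-constant. A secondary point is justifying the contour shift in Lemma \ref{lem:nabla varphi=0}, which needs $|a_i|<1$ and possibly a temporary restriction $|a_6|<|q|$, followed by analytic continuation.
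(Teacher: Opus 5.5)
\textbf{There is a genuine gap.} Your framework matches the paper's: Aomoto's method with $\nabla_{q,z_i}$, the balancing condition killing the top-degree term, and interpolation by fundamental $BC_n$ invariants. The central step, however, fails as stated for $n\ge 2$.

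The balancing condition only guarantees that $\nabla\varphi$ lies in ${\sf H}_{1,n}$, which has dimension $n+1$. Your two products $\prod_i(1-a_1z_i)(1-a_1z_i^{-1})$ and $\prod_i(1-a_6z_i/q)(1-a_6z_i^{-1}/q)$ are, up to normalization, only the two extreme invariants $E_0$ and $E_n$, with $a_2$ replaced by the base value $a_6'=q^{-1}a_6$. They are not a basis. Evaluating at the two ``pure'' points $(a_1,\dots,a_1t^{n-1})$ and its $a_6'$-analogue determines only two of the $n+1$ coefficients. Nothing in your construction forces the other $n-1$ coefficients to vanish, and a product-type kernel generically does not achieve this. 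Already for $n\ge3$, a symmetric product $\prod_j(x_j-\beta)$ in $n-1$ variables has one free parameter, but vanishing of the intermediate coefficients imposes $n-1$ conditions governed by the ratios $F_{+,j}(\zeta_j)/F_{+,n}(\zeta_j)$.

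For the same reason, the full product over $i=1,\dots,n$ in \eqref{eq:recurrenceJ} cannot come from ``telescoping'' at a single point. At $(a_1,\dots,a_1t^{n-1})$ the surviving coefficient $F_{+,n}$ contributes only the single factor $\prod_k(1-a_1a_kt^{n-1})$.

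\textbf{What is missing.} You need all $n+1$ mixed interpolation points
\[
\zeta_j=(a_1,\dots,a_1t^{j-1},a_6',\dots,a_6't^{n-j-1}),
\]
the basis $E_0,\dots,E_n$ of ${\sf H}_{1,n}$ with $E_i(\zeta_j)=\delta_{ij}$, and the vanishing properties $F_{-,i}(\zeta_j)=0$ for all $i$ and $F_{+,i}(\zeta_j)=0$ for $i\ne j,n$. Apply $\sum_i\nabla_{q,z_i}$ to $E^{(n-1)}_{r-1}(z_{\,\widehat{i}\,})$, which does not depend on $z_i$. This gives $h_r=c_{r,r}E_r+c_{r,r-1}E_{r-1}$ with $c_{r,r}=F_{+,r}(\zeta_r)$ and $c_{r,r-1}=F_{+,n}(\zeta_{r-1})$. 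Hence you get two-term relations between $\la E_r\ra$ and $\la E_{r-1}\ra$. Chaining $r=1,\dots,n$ yields $\la E_n\ra/\la E_0\ra$, with each step contributing one $i$-factor.

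If you insist on a single $\varphi$, it must be the specific non-product combination $\sum_r\lambda_rE^{(n-1)}_{r-1}(z_{\,\widehat{i}\,})$ whose $\lambda_r$ are fixed by exactly this recursion, so the chain cannot be bypassed.

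A minor point: your worry about needing $|a_6|<|q|$ is unnecessary. With $v=a_6$, the $a_6$-dependence of the integrand enters only through numerator factors, so no poles come from $a_6$.
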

%
\proof 
The proof of \eqref{eq:recurrenceJ} will be given 
in Section \ref{section:5}. It is based 
on Aomoto's method 
and the use of particular 
interpolation polynomials known as fundamental invariants of type $BC$. 
We remark that in \cite[Corollary 5.3]{Ito2011} a $q$-difference equation equivalent to \eqref{eq:recurrenceJ} has previously been derived in a more general setting. 
 The proof here is suited to the present situation.
\qed
\medskip
By repeated use of \eqref{eq:recurrenceJ} for the case $a_{1}\to a_{1}q^{N}$ and $a_{6}\to a_{6}q^{-N}$ $(N\to \infty)$, 
we have 
\begin{align*}
&\mathcal{J}(a_{1},a_{2},\ldots,a_{5},a_{6})
=\mathcal{J}(qa_{1},a_{2},\ldots,a_{5},q^{-1}a_{6})\prod_{i=1}^{n}
\prod_{k=2}^{5}\frac{1-qa_{k}^{-1}a_{6}^{-1}t^{-(i-1)}}{1-a_{1}a_{k}t^{i-1}}
\\&\quad
=\mathcal{J}(q^{N}a_{1},a_{2},\ldots,a_{5},q^{-N}a_{6})
\prod_{i=1}^{n}\prod_{k=2}^{5}\frac{(qa_{k}^{-1}a_{6}^{-1}t^{-(i-1)};q)_{N}}{(a_{1}a_{k}t^{i-1};q)_{N}}
\\&\quad
=\lim_{N\to \infty}\mathcal{J}(q^{N}a_{1},a_{2},\ldots,a_{5},q^{-N}a_{6})\times
\prod_{i=1}^{n}\prod_{k=2}^{5}\frac{(qa_{k}^{-1}a_{6}^{-1}t^{-(i-1)};q)_{\infty}}{(a_{1}a_{k}t^{i-1};q)_{\infty}}.
\end{align*}
Now, taking into account that $t^{n-1}a_{1}a_{2}a_{6}=q/a_{3}a_{4}a_{5}t^{n-1}$, we have 
\begin{equation}
\label{eq:KBClimit}
\begin{split}
&\lim_{N\to \infty}\mathcal{J}(q^{N}a_{1},a_{2},\ldots,a_{5},q^{-N}a_{6})\\
&\quad=
\frac{1}{n!}\int_{\mathbb{T}^{n}}
\prod_{i=1}^{n}
\bigg[
\frac{(z_{i}^{2};q)_{\infty}}
{(qz_{i}^{2};q)_{\infty}}
\frac{\theta(t^{n-1}a_{3}a_{4}a_{5}z_{i};q)}{(a_2z_{i},a_2z_{i}^{-1};q)_{\infty}}
\prod_{k=3}^{5}\frac{(qa_{k}^{-1}z_{i};q)_{\infty}}{(a_{k}z_{i};q)_{\infty}}
\bigg]
\\&\qquad\quad
\times\prod_{1\le j< k\le n}
\frac{(z_{j}z_{k};q)_{\infty}}{(qz_{j}z_{k};q)_{\infty}}
\frac{(z_{j}/z_{k},z_{k}/z_{j};q)_{\infty}}
{(tz_{j}/z_{k},tz_{k}/z_{j};q)_{\infty}}
\frac{(qt^{-1}z_{j}z_{k};q)_{\infty}}
{(tz_{j}z_{k};q)_{\infty}}
\,\varpi(z). 
\end{split}
\end{equation}
Therefore, once the value of integral \eqref{eq:KBClimit} is specified, 
the proof of  \eqref{eq:KBCv=a6} in Lemma \ref{lem:KBCv=a6} is then finalized, and so too is 
the proof of Theorem \ref{thm:KBC}.
\begin{lem}
Suppose that $|a_{i}|<1$ $(i=2,3,4,5)$ and $|t|<1$. Then we have 
\begin{equation}
\label{eq:MacSum}
\begin{split}
&
\frac{1}{n!}\int_{\mathbb{T}^{n}}
\prod_{i=1}^{n}
\bigg[
\frac{(z_{i}^{2};q)_{\infty}}
{(qz_{i}^{2};q)_{\infty}}
\frac{\theta(t^{n-1}a_{3}a_{4}a_{5}z_{i};q)}{(a_2z_{i},a_2z_{i}^{-1};q)_{\infty}}
\prod_{k=3}^{5}\frac{(qa_{k}^{-1}z_{i};q)_{\infty}}{(a_{k}z_{i};q)_{\infty}}
\bigg]
\\&\qquad\quad
\times\prod_{1\le j< k\le n}
\frac{(z_{j}z_{k};q)_{\infty}}{(qz_{j}z_{k};q)_{\infty}}
\frac{(z_{j}/z_{k},z_{k}/z_{j};q)_{\infty}}
{(tz_{j}/z_{k},tz_{k}/z_{j};q)_{\infty}}
\frac{(qt^{-1}z_{j}z_{k};q)_{\infty}}
{(tz_{j}z_{k};q)_{\infty}}
\,\varpi(z)\\[5pt]
&\quad =
\prod_{i=1}^{n}\frac{(t, t^{2n-i-1}a_{2}a_{3}a_{4}a_{5};q)_{\infty}
\prod_{3\le j<k\le 5}(qa_{j}^{-1}a_{k}^{-1}t^{-(i-1)};q)_{\infty}}
{(q,t^{i};q)_{\infty}\prod_{l=3}^{5}(t^{i-1}a_{2}a_{l};q)_{\infty}}.
\end{split}
\end{equation}
\end{lem}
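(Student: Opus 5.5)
The plan is to evaluate the left-hand side of \eqref{eq:MacSum} by the same residue calculus used in the proof of Lemma \ref{lem:J(v)/factor}, and then to identify the resulting residue series with a known multiple very-well-poised summation of type $BC_n$. First I symmetrize and pass to the cycle $\tilde{\mathbb{T}}^n$ with measure $\tilde\varpi(z)$, exactly as before. Then I factor the integrand as $P(z)Q(z)$, in analogy with \eqref{eq:P(z;v)} and \eqref{eq:Q(z)}.

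In $P(z)$ I put the ``elliptic'' part
\[
\prod_{i=1}^n\frac{\theta(t^{n-1}a_3a_4a_5z_i;q)}{z_i^{\alpha_i}\,\theta(a_2z_i^{-1};q)}\prod_{1\le j<k\le n}\frac{\theta(z_j/z_k;q)}{\theta(tz_j/z_k;q)}.
\]
The exponents $\alpha_i$ are chosen so that $P$ is $q$-periodic in each $z_i$. Using \eqref{eq:00quasi-period}, $\theta(cz)/\theta(a_2/z)$ picks up the factor $1/(ca_2)$ under $z\to qz$. So $\alpha_i$ is $1+2\tau(n-i)$ corrected by $\log_q(t^{n-1}a_2a_3a_4a_5)$; this is harmless because the product $PQ$ is unchanged. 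The remaining factor $Q(z)$ is then holomorphic and nonvanishing in the region $|z_1|<1$, $|z_{i}/z_{i-1}|<1$, apart from the explicit zeros.

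Since $a_1$ no longer appears, the poles of $P$ in that region come only from $(a_2z_i^{-1};q)_\infty$ and $(tz_j/z_k;q)_\infty$. Hence they all lie over the single point $\zeta_0=(a_2,a_2t,\ldots,a_2t^{n-1})$, namely at $\zeta_0q^\nu$ with $0\le\nu_1\le\cdots\le\nu_n$ (this is $\Lambda_0$). As in \eqref{eq:ResidueKBC}, the integral becomes
\[
\mathop{\mathrm{Res}}_{z=\zeta_0}P(z)\tilde\varpi(z)\cdot\sum_{\nu\in\Lambda_0}Q(\zeta_0q^\nu).
\]
The residue is a routine product: it is a product of $\theta(t^{n+i-2}a_2a_3a_4a_5;q)$ factors together with $(q;q)_\infty$ and $\theta(t^{k};q)$ factors. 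The vanishing of the integral over the shrunken torus $\tilde{\mathbb{T}}_\epsilon^{\,n}$ is checked exactly as in Remark 2, because $Q(z)\sim\prod z_i^{\alpha_i}$ near the origin and the effective exponents are still positive for generic parameters.

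The core step is to recognize $\sum_{\nu\in\Lambda_0}Q(\zeta_0q^\nu)$. After normalizing by $Q(\zeta_0)$ and using the standard rewriting of the $t$-factors over $\Lambda_0$ (Macdonald-type $t$-shifted factorials), it should become a terminating-free $BC_n$ very-well-poised ${}_6\phi_5$-type series. Its base is $a_2^2$, its numerator parameters are $a_2a_3$, $a_2a_4$, $a_2a_5$ and $t$, and its argument is $q/(t^{2n-2}a_2^2a_3a_4a_5)$. This is the multiple ${}_6\phi_5$ summation of Aomoto, Macdonald, van Diejen and Milne (the Macdonald/Aomoto $BC_n$ sum, whence the label), and I would invoke that result. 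For $n=1$ it reduces to Rogers' ${}_6\phi_5$ sum, which gives a consistency check analogous to the ${}_8\phi_7$ check in Remark 1.

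Finally I multiply the summation value by the residue factor and by $Q(\zeta_0)$, and simplify with \eqref{eq:00quasi-period}. The $\theta$-factors from the residue should combine with infinite products to leave exactly $(t^{2n-i-1}a_2a_3a_4a_5;q)_\infty$ and $(qa_j^{-1}a_k^{-1}t^{-(i-1)};q)_\infty$. The step I expect to be the main obstacle is this bookkeeping: matching the normalization of the multiple sum (the $t$-dependent prefactors and the $\theta(z_j/z_k)/\theta(tz_j/z_k)$ residue) with the stated right-hand side. If the direct identification is awkward, a fallback is to take the limit $a_6\to0$-type degeneration of Gustafson's formula \eqref{eq:mNRint}, where the ${}_8\phi_7$ residue series reduces to this one, and to compare residue series term by term.
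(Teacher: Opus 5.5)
This is essentially the paper's proof: a residue expansion at the single base point $(a_2,a_2t,\ldots,a_2t^{n-1})$ over $\Lambda=\{0\le\nu_1\le\cdots\le\nu_n\}$, whose sum is van Diejen's truncated Macdonald-type sum (a $BC_n$ very-well-poised ${}_6\phi_5$), times an explicit theta-function residue; the paper only differs in symmetrizing over $\mathfrak{S}_n$ on $\mathbb{T}^n$ with a symmetric $q$-periodic factor instead of using the nested cycle $\tilde{\mathbb{T}}^n$. Two small corrections: the series argument is $q/(t^{2n-2}a_2a_3a_4a_5)$ (no $a_2^2$), and the shrinking-torus contribution vanishes only on the open set $|q|<|t^{2n-2}a_2a_3a_4a_5|$ where that series converges, not for generic parameters, so you should finish by analytic continuation in $a_2,\ldots,a_5,t$.
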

%
\proof 
%
The integrand of the left-hand side of \eqref{eq:KBC} is rewritten as 
$P(z)M(z)$, where
\begin{align*}
P(z)&=\prod_{i=1}^{n}
\frac{\theta(z_{i}^{2},t^{n-1}a_{3}a_{4}a_{5}z_{i};q)}{\theta(a_2z_{i}^{-1};q)\prod_{k=2}^{5}\theta(a_{k}z_{i};q)}
\prod_{1\le j< k\le n}
\frac{\theta(z_{j}z_{k};q)}{\theta(tz_{j}z_{k};q)}
\frac{\theta(z_{j}/z_{k},z_{k}/z_{j};q)}
{\theta(tz_{j}/z_{k},tz_{k}/z_{j};q)},
\\
M(z)&=\prod_{i=1}^{n}
\frac{\prod_{k=2}^{5}(qa_{k}^{-1}z_{i},qa_{k}^{-1}z_{i}^{-1};q)_{\infty}}{(qz_{i}^{2},qz_{i}^{-2};q)_{\infty}}
\\&\qquad
\times\prod_{1\le j< k\le n}
\frac{(qt^{-1}z_{j}z_{k}^{-1},qt^{-1}z_{j}^{-1}z_{k},qt^{-1}z_{j}z_{k},qt^{-1}z_{j}^{-1}z_{k}^{-1};q)_{\infty}}
{(qz_{j}z_{k}^{-1},qz_{j}^{-1}z_{k},qz_{j}z_{k},qz_{j}^{-1}z_{k}^{-1};q)_{\infty}}.
\nonumber
\end{align*}
The function $P(z)$ is $\mathfrak{S}_{n}$-symmetric and $q$-periodic with respect to $z$. 
The function $M(z)$ is %
also $\mathfrak{S}_{n}$-symmetric.  
For $\mathbb{T}^{n}$, we calculate the residue series in the region 
$|z_{1}|<1, |z_{2}|<1,\ldots, |z_{n}|<1$. 
Then, taking account of the condition $|a_{2}|<1,\ldots,|a_{5}|<1$ and $|t|<1$, 
the set of poles lying on the set 
$\prod_{i=1}^{n}
(a_{2}z_{i}^{-1};q)_{\infty}\prod_{1\le j< k\le n}(tz_{j}/z_{k},tz_{k}/z_{j};q)_{\infty}=0$ 
is within the specified region and is written as 
$
\bigcup_{\sigma\in \mathfrak{S}_{n}}\sigma\{\zeta q^{\nu}\,|\,\nu\in \Lambda\}$, where 
$
\zeta=(a_{2},a_{2} t,\ldots,a_{2} t^{n-1})\in (\mathbb{C}^*)^n
$ and $\Lambda=\{\nu\in \mathbb{Z}^{n}\,|\, 0\le \nu_{1}\le \cdots\le \nu_{n}\}$. 
Therefore the integral of \eqref{eq:KBC} is written as the sum of residues as
\begin{align}
&\frac{1}{n!}\int_{\mathbb{T}^{n}}P(z)M(z)\varpi(z)
=\frac{1}{n!}\sum_{\sigma\in \mathfrak{S}_{n}}\sum_{\nu\in \Lambda}
\mathop{\mathrm{Res}}_{z=\sigma(\zeta q^{\nu})}P(z)M(z)\varpi(z)
\nonumber\\&\quad=\frac{1}{n!}\sum_{\sigma\in \mathfrak{S}_{n}}\sum_{\nu\in \Lambda}
\mathop{\mathrm{Res}}_{z=\zeta q^{\nu}}\sigma P(z)\sigma M(z)\varpi(z)
=\sum_{\nu\in \Lambda}
\mathop{\mathrm{Res}}_{z=\zeta q^{\nu}}P(z)M(z)\varpi(z)
\nonumber\\
&\quad=
\mathop{\mathrm{Res}}_{z=\zeta}P(z)\varpi(z) \sum_{\nu\in \Lambda}M(\zeta q^{\nu}). 
\label{eq:ResidueMacSum}
\end{align}
From the explicit form 
of $P(z)$, the residue at $z=\zeta$ is calculated directly as 
$$
\mathop{\mathrm{Res}}_{z=\zeta}P(z)\varpi(z)
=
\prod_{i=1}^{n}\frac{\theta(t, t^{2n-i-1}a_{2}a_{3}a_{4}a_{5};q)}
{(q;q)_{\infty}^{2}\theta(t^{i};q)\prod_{l=3}^{5}\theta(t^{i-1}a_{2}a_{l};q)}.
$$
On the other hand, the sum $\sum_{\nu\in \Lambda}M(\zeta q^{\nu})$ is known as van Diejen's truncated Macdonald-type sum 
\cite[p.490 Theorem 1 (2.11), (2.13)]{vD97}, 
and is evaluated as (see also \cite[p.138]{Ito2006aw} or \cite[p.641 Theorem 6.1]{Ito2006bc})
$$
\sum_{\nu\in \Lambda}M(\zeta q^{\nu})=
\prod_{i=1}^{n}\frac{(q,qt^{-i};q)_{\infty}\prod_{2\le j<k\le 5}(qa_{j}^{-1}a_{k}^{-1}t^{-(i-1)};q)_{\infty}}
{(qt^{-1},qa_{2}^{-1}a_{3}^{-1}a_{4}^{-1}a_{5}^{-1}t^{-(2n-i-1)};q)_{\infty}}.
$$
Therefore, from \eqref{eq:ResidueMacSum}, the left-hand side of \eqref{eq:KBC} is evaluated as  
$$
\prod_{i=1}^{n}\frac{\theta(t, t^{2n-i-1}a_{2}a_{3}a_{4}a_{5};q)}
{(q;q)_{\infty}^{2}\theta(t^{i};q)\prod_{l=3}^{5}\theta(t^{i-1}a_{2}a_{l};q)}
\frac{(q,qt^{-i};q)_{\infty}\prod_{2\le j<k\le 5}(qa_{j}^{-1}a_{k}^{-1}t^{-(i-1)};q)_{\infty}}
{(qt^{-1},qa_{2}^{-1}a_{3}^{-1}a_{4}^{-1}a_{5}^{-1}t^{-(2n-i-1)};q)_{\infty}},
$$
which is equal to the right-hand side of \eqref{eq:KBC}. This completes the proof.
\qed
\par
\medskip
Our remaining task is to establish
the two-term recurrence relation \eqref{eq:recurrenceJ} in Lemma \ref{lem:recurrenceJ}. As previously remarked, for its derivation we will follow Aomoto's method, which in turn requires the use of particular interpolation polynomials $\{ E_r(z) \}$ known as fundamental invariants of type $BC$.  
The latter are introduced in the next section, Section \ref{section:3}. In
Section \ref{section:4} Aomoto's method is introduced and used in combination with $\{ E_r(z) \}$.
Let $\langle \phi (z) \rangle$ denote the integral on the 
left-hand side of \eqref{eq:KBC}, with a further factor of
$\phi (z)$ in the integrand, 
and 
under the condition $q=a_{1}a_{2}a_{3}a_{4}a_{5}a_{6}t^{2n-2}$.
Aomoto's method is used to establish a recurrence between
$\langle E_r (z) \rangle$ and $\langle E_{r-1} (z) \rangle$. 
It is then shown in Section \ref{section:5} how this can be used to establish \eqref{eq:recurrenceJ}.
\section{Fundamental $BC_n$ invariants}
\label{section:3}
We denote by $W=(\mathbb{Z}/2\mathbb{Z})^{n}\rtimes\mathfrak{S}_{n}$ the Weyl group of type $C_{n}$. 
This group acts on the space $\mathcal{O}((\mathbb{C}^{*})^{n})$ of holomorphic functions on $(\mathbb{C}^{*})^{n}$ 
through permutations and inversions of the variables $z_{1},\ldots, z_{n}$.
Let ${\sf H}_{s,n}$ be the $\mathbb{C}$-linear space of $\mathbb{C}[z_1^{\pm1},\ldots,z_n^{\pm1}]$ consisting of all 
$W$-invariant Laurent polynomials $f(z)$ such that $\deg_{z_i} f(z)\le s$ for $i=1,\ldots,n$, i.e., 
$$
{\sf H}_{s,n}=\{f(z)\in\mathbb{C}[z_1^{\pm1},\ldots,z_n^{\pm1}]^{W}\,|\, \deg_{z_i} f(z)\le s\ (i=1,\ldots,n)\}.
$$
The dimension of ${\sf H}_{s,n}$ as a $\mathbb{C}$-linear space is known as $n+s\choose s$. 
\par
We use the symbol ${\sf e}(u,v)$ specified by 
$${\sf e}(u,v):=u^{-1}(1-uv)(1-uv^{-1})=u+u^{-1}-v-v^{-1},$$
which satisfies 
$$
{\sf e}(v,u)=-{\sf e}(u,v),\quad {\sf e}(u^{-1},v)={\sf e}(u,v), \quad {\sf e}(u,u)=0.
$$
We introduce a $\mathbb{C}$-basis of ${\sf H}_{1,n}$ using the functions (Laurent polynomials)
\begin{equation}
\label{eq:E_r(z)}
E_{r}(z):=\sum_{\substack{1\le i_{1}<\cdots <i_{r}\le n \\ 1\le j_{1}<\cdots <j_{n-r}\le n}}
\prod_{k=1}^{r}\frac{{\sf e}(z_{k}, a_{2}t^{i_{k}-k})}{{\sf e}(a_{1}t^{k-1},a_{2}t^{i_{k}-k})}
\prod_{l=1}^{n-r}\frac{{\sf e}(z_{l}, a_{1}t^{j_{l}-l})}{{\sf e}(a_{2}t^{l-1},a_{1}t^{j_{l}-l})},
\end{equation}
where the summation is taken over all pairs of sequences $1\le i_{1}<\cdots <i_{r}\le n$ and
$1\le j_{1}<\cdots <j_{n-r}\le n$ such that $\{i_{1},\ldots ,i_{r}\}\cup \{j_{1},\ldots ,j_{n-r}\}
=\{1,2,\ldots,n\}$. 
In particular, we have 
\begin{equation}
\label{eq:E_0(z)E_n(z)}
\begin{split}
E_{0}(z)&=\prod_{l=1}^{n}\frac{{\sf e}(z_{l},a_{1})}{{\sf e}(a_{2}t^{l-1},a_{1})}
=\prod_{i=1}^{n}\frac{(1-a_{1}z_{i})(1-a_{1}z_{i}^{-1})}
{(1-a_{1}a_{2}t^{i-1})(1-a_{1}a_{2}^{-1}t^{-(i-1)})},\\
E_{n}(z)&=\prod_{k=1}^{n}\frac{{\sf e}(z_{k},a_{2})}{{\sf e}(a_{1}t^{k-1},a_{2})}
=\prod_{i=1}^{n}\frac{(1-a_{2}z_{i})(1-a_{2}z_{i}^{-1})}
{(1-a_{1}a_{2}t^{i-1})(1-a_{2}a_{1}^{-1}t^{-(i-1)})}. 
\end{split}
\end{equation}
The functions $E_{i}(z)$ are called the {\it fundamental $BC_n$ invariants} in \cite{IN2017}. \\[10pt]
{\bf Remark.} In \cite[p.38 Definition 3.2]{IN2017}, the unit ${\sf e}(u,v)$ is defined as $u^{-1}\theta(uv,uv^{-1};p)$ instead of $u^{-1}(1-uv)(1-uv^{-1})$. 
Since we see $u^{-1}\theta(uv,uv^{-1};p)\to u^{-1}(1-uv)(1-uv^{-1})$ if $p\to 0$, 
the properties of $\{E_{i}(z)\}$ shown in \cite{IN2017} hold true here as well. 
For example, $E_{i}(z)$ does not appear to be $W$-symmetric with respect to $z$ from the definition \eqref{eq:E_r(z)}, 
but \cite{IN2017} shows that it is in fact $W$-symmetric. 
\par\medskip
\begin{prop}
\label{prop:E(zeta)=delta}
The set $\{E_i(z)\,|\,i=0,1,\ldots,n\}$ forms a $\mathbb{C}$-basis of ${\sf H}_{1,n}$ satisfying 
\begin{equation}
\label{eq:E(zeta)=delta}
E_i(\zeta_j)=\delta_{ij}\quad \mbox{for}\quad i,j \in \{0,1,\ldots,n\},
\end{equation}
where $\zeta_{j}\in (\mathbb{C}^*)^n$ $(j=0,1,\ldots,n)$ are the points specified by \eqref{eq:zeta_j} and 
$\delta_{ij}$ is the Kronecker delta. 
\end{prop}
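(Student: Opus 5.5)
The plan is to treat the statement as two separate facts: membership and interpolation. Granting the $W$-invariance of each $E_i(z)$ established in \cite{IN2017} (see the Remark above), the interpolation property \eqref{eq:E(zeta)=delta} will force the basis property by a dimension count. Concretely, I will proceed in the following order.

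\emph{Step 1: membership in ${\sf H}_{1,n}$.} Each factor ${\sf e}(z_m,c)=z_m+z_m^{-1}-c-c^{-1}$ has degree $1$ in $z_m$. In every term of \eqref{eq:E_r(z)}, each variable should occur in exactly one factor. So $\deg_{z_i}E_r\le 1$, and by the cited $W$-symmetry, $E_r\in{\sf H}_{1,n}$.

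\emph{Step 2: dimension count.} Since $\dim{\sf H}_{1,n}={n+1\choose 1}=n+1$, it suffices to prove $E_i(\zeta_j)=\delta_{ij}$. Indeed, if $\sum_i c_iE_i=0$, evaluating at $\zeta_j$ gives $c_j=0$. Hence the $n+1$ functions are linearly independent, and therefore form a basis.

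\emph{Step 3: evaluation at the points $\zeta_j$.} Because $E_r$ is $W$-symmetric, I may evaluate at any permutation of the coordinates of $\zeta_j$. I will choose the ordering adapted to the way the variables are distributed among the two products in \eqref{eq:E_r(z)}. In that ordering, the variables feeding the first product take the values $a_1t^{k-1}$, and those feeding the second take the values $a_2t^{l-1}$, as far as $\zeta_j$ allows. The key tool is that ${\sf e}(z,c)=0$ exactly when $z=c^{\pm1}$.

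For $j=r$, I will show that every term of the sum vanishes except the one with $i_k=k$ and $j_l=l$. In any other term, some index satisfies $i_k>k$ or $j_l>l$. Following the chain of indices, a pigeonhole argument on the complementary sets $\{i_k\}$ and $\{j_l\}$ should produce a factor ${\sf e}(a_2t^{m},a_2t^{m})$ or ${\sf e}(a_1t^{m},a_1t^{m})$, which is $0$. The surviving term has each numerator equal to its denominator, so it equals $1$; this is exactly the reason for the chosen normalizations ${\sf e}(a_1t^{k-1},a_2t^{i_k-k})$ and ${\sf e}(a_2t^{l-1},a_1t^{j_l-l})$.

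For $j\neq r$, say $j>r$ (the case $j<r$ is symmetric under $a_1\leftrightarrow a_2$, $r\leftrightarrow n-r$), the point $\zeta_j$ carries more $a_1$-type coordinates than the first product can absorb. At least one $a_1t^{m-1}$ with $m\le j$ must therefore enter a factor ${\sf e}(\cdot\,,a_1t^{j_l-l})$ of the second product. I would then show that the constraints $j_l\ge l$ together with strict monotonicity force $j_l-l=m-1$ for some such pair, giving a zero. The checks $E_0(\zeta_j)=\delta_{0j}$ and $E_n(\zeta_j)=\delta_{nj}$ are immediate from the product formulas \eqref{eq:E_0(z)E_n(z)}, and they serve as a sanity test.

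\emph{Main obstacle.} The hard part will be the combinatorial vanishing in Step 3. I must show that for $j\ne r$ every term contains a zero factor, and that for $j=r$ only the diagonal term survives. The difficulty is the bookkeeping of the shifts $t^{i_k-k}$ and $t^{j_l-l}$ against the geometric progressions in $\zeta_j$. I would first try an induction on $n$: expand along the last variable, noting that removing the largest index reduces the sum to one of the same shape with $n-1$ variables. If that becomes unwieldy, the fallback is to invoke directly the interpolation property proved in \cite{IN2017} for the elliptic version of ${\sf e}(u,v)$ and pass to the limit $p\to0$, as the Remark indicates.
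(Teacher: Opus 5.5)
Your route is correct in outline and differs from the paper's, which gives no argument and simply cites \cite[Theorem 3.1]{IN2017} for symmetry, basis property and interpolation all at once; that result is stated for the elliptic ${\sf e}$ and passes to this case by $p\to0$. You cite only the $\mathfrak{S}_n$-symmetry and derive the rest yourself. Inversion invariance is automatic, since ${\sf e}(u^{-1},v)={\sf e}(u,v)$. The bound $\deg_{z_i}E_r\le1$ holds once \eqref{eq:E_r(z)} is read with $z_{i_k}$ and $z_{j_l}$ in the numerators, as you implicitly do; the printed $z_k$, $z_l$ would give degree $2$. Interpolation plus $\dim{\sf H}_{1,n}=n+1$ then gives the basis. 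This makes \eqref{eq:E(zeta)=delta} self-contained, while the citation is still needed for the symmetry, which is the genuinely nontrivial part.

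Your Step 3, however, misidentifies the surviving term. The conditions $i_k=k$ for all $k$ and $j_l=l$ for all $l$ cannot hold together when $0<r<n$, because $\{i_k\}$ and $\{j_l\}$ are complementary. At $\zeta_r$ the survivor is $I=\{1,\dots,r\}$, $J=\{r+1,\dots,n\}$, so $j_l=r+l>l$. Your criterion ``some $i_k>k$ or $j_l>l$'' therefore does not separate it from the vanishing terms. You also do not need to reorder $\zeta_j$; in fact no single ordering can be ``adapted to'' the split of variables, since the split changes with $I$. Neither induction nor the $a_1\leftrightarrow a_2$ reduction is required.

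Instead, evaluate at $\zeta_j$ as given, whose $p$-th coordinate is $a_1t^{p-1}$ for $p\le j$ and $a_2t^{p-j-1}$ for $p>j$.
\begin{itemize}
\item If $\{1,\dots,j\}\not\subseteq I$, then $j_1\le j$, and the $l=1$ factor is ${\sf e}(a_1t^{j_1-1},a_1t^{j_1-1})=0$. This kills every term when $r<j$. When $r=j$ it kills every term except $I=\{1,\dots,j\}$, and that term equals $1$ by the normalization.
\item If $r>j$, then $k=j+1\le r$ and $i_{j+1}\ge j+1$. So in every term the $k=j+1$ factor is ${\sf e}(a_2t^{i_{j+1}-j-1},a_2t^{i_{j+1}-(j+1)})=0$.
\end{itemize}
With this two-line check in place of your sketch, the proof is complete modulo the cited symmetry.
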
 
\proof 
See \cite[p.37 Theorem 3.1]{IN2017}. \qed
%
%
\section{Definition of the integral, and the operator $\nabla_{q,z_i}$}
\label{section:4}
The aim of this section is to derive a recurrence relation among expectation values of the fundamental 
$BC_n$ invariants, which will be used to establish the $q$-difference equation in Section \ref{section:5}. 
\par
Let $\Phi(z)$ and $\Psi(z)$ be the functions in $z=(z_1,\ldots,z_n)\in (\mathbb{C}^*)^n$ defined by 
\begin{equation}
\label{eq:BC-Phi}
\begin{split}
\Phi(z)&:=\prod_{i=1}^n\frac{(z_i^2,z_i^{-2};q)_\infty}
{\prod_{m=1}^6(a_mz_i,a_mz_i^{-1};q)}
\prod_{1\le j<k\le n}\frac{(z_jz_k^{-1},z_j^{-1}z_k,z_jz_k,z_j^{-1}z_k^{-1};q)_\infty}
{(tz_jz_k^{-1},tz_j^{-1}z_k,tz_jz_k,tz_j^{-1}z_k^{-1};q)_\infty}
\end{split}
\end{equation}
and 
\begin{equation}
\label{eq:BC-Psi}
\Psi(z)=h(z)\Phi(z),
\end{equation}
respectively, where $h(z)=\prod_{i=1}^{n}\theta(a_{6}z_{i},a_6z_i^{-1};q)$. 
Using \eqref{eq:00quasi-period}, 
the function 
$h(z)$ satisfies the quasi-periodicity 
\begin{equation}
\label{eq:Th(z)=h(z)***}
T_{q,z_i}h(z)=h(z)/qz_{i}^{2}
\quad (i=1,2,\ldots,n)
, 
\end{equation}
where $T_{q,z_i}$ stands for the $q$-shift operator in $z_i$,
$$
T_{q,z_i}f(z_1,\ldots,z_i,\ldots,z_n)=f(z_1,\ldots,qz_i,\ldots,z_n).
$$
We remark that 
the function $\Psi(z)$ coincides with the integrand of 
Gustafson's multidimensional Nassrallah--Rahman integral \eqref{eq:mNRint}. 
Further introduce the function $\tilde\Psi(z)$ defined by 
\begin{equation}
\label{eq:tPsi,Psi}
\tilde\Psi(z)=\tilde{h}(z)\Psi(z),
\end{equation}
where 
$$
\tilde{h}(z)=
\prod_{i=1}^{n}
\bigg[
\frac{\theta(t^{n-1}a_{3}a_{4}a_{5}a_{6}v^{-1}z_{i}, vz_{i};q)}{\theta(qz_{i}^{2};q)}
\frac{\prod_{k=3}^{5}\theta(a_{k}/z_{i};q)}{\theta(a_{6}z_{i};q)}
\bigg]
\prod_{1\le j< k\le n}
\frac{\theta(qt^{-1}z_{j}z_{k};q)}{\theta(qz_{j}z_{k};q)}. 
$$
Using \eqref{eq:00quasi-period}, it is confirmed that 
the function $\tilde{h}(z)$ is $q$-periodic with respect to $z$, i.e.,
$$
T_{q,z_i}\tilde{h}(z)=\tilde{h}(z)
\quad (i=1,2,\ldots,n).
$$
If we impose $q=a_{1}a_{2}a_{3}a_{4}a_{5}a_{6}t^{2n-2}$, then we see 
$\theta(t^{n-1}a_{3}a_{4}a_{5}a_{6}v^{-1}z_{i};q)=\theta(t^{n-1}a_{1}a_{2}vz_{i}^{-1};q)$.
This implies 
that the function $\tilde\Psi(z)$ coincides with the integrand of the left-hand side of \eqref{eq:KBC} 
under the condition $q=a_{1}a_{2}a_{3}a_{4}a_{5}a_{6}t^{2n-2}$. 
Moreover, from the relation \eqref{eq:tPsi,Psi}, $\tilde\Psi(z)$ can be considered equivalent to $\Psi(z)$ up to a pseudo-constant. 
\par
Under the definitions \eqref{eq:varpi(z)} and \eqref{eq:n-torus}
of $\varpi(z)$ and $\mathbb{T}^n$, respectively, 
we use the notation
\begin{equation}
\label{eq:la-ra}
\la\varphi(z)\ra=\frac{1}{n!}\int_{\mathbb{T}^n}\varphi(z)\tilde\Psi(z)\varpi(z)
\end{equation}
for any meromorphic function $\varphi(z)$ on $(\mathbb{C}^*)^n$ such that 
$\varphi(z)\tilde\Psi(z)$ is holomorphic in a neighborhood of $\mathbb{T}^n$. 
In particular, we see that the integral 
$$
\la 1\ra=\frac{1}{n!}\int_{\mathbb{T}^n}\tilde\Psi(z)\varpi(z)
$$
coincides with the left-hand side of \eqref{eq:KBC} under the condition $q=a_{1}a_{2}a_{3}a_{4}a_{5}a_{6}t^{2n-2}$.
From the definition of $\Phi(z)$ and $\Psi(z)$ we have 
\begin{align*}
\frac{T_{q,z_i}\Phi(z)}{\Phi(z)}
&=-
\frac{1-1/q^{2}z_{i}^{2}}{qz_{i}^{2}(1-z_{i}^{2})}\prod_{k=1}^6\frac{1-a_kz_i}{1-a_k/qz_i}
\\&\times
\prod_{\substack{1\le j\le n\\[1pt] j\ne i}}
\frac{(1-tz_i/z_j)(1-tz_iz_j)(1-z_j/qz_i)(1-1/qz_iz_j)}{(1-z_i/z_j)(1-z_iz_j)(1-tz_j/qz_i)(1-t/qz_iz_j)},
\end{align*}
so that we have 
\begin{align*}
\frac{T_{q,z_i}\Psi(z)}{\Psi(z)}
=\frac{T_{q,z_i}\Phi(z)}{\Phi(z)}\frac{T_{q,z_i}h(z)}{h(z)}=\frac{1}{qz_{i}^{2}}\frac{T_{q,z_i}\Phi(z)}{\Phi(z)}
%
=-\frac{F_{+,i}(z)}{T_{q,z_{i}}F_{-,i}(z)},
\end{align*}
where
\begin{align}
F_{+,i}(z)&:=\frac{1}{z_{i}^{2}}\frac{\prod_{k=1}^6(1-a_kz_i)}{1-z_{i}^{2}}
\prod_{\substack{1\le j\le n\\[1pt] j\ne i}}
\frac{(1-tz_i/z_j)(1-tz_iz_j)}{(1-z_i/z_j)(1-z_iz_j)}
\label{eq:F+i_1}\\&
=\frac{1}{z_{i}^{3+(n-1)}}\frac{\prod_{k=1}^6(1-a_kz_i)}{z_{i}^{-1}(1-z_{i}^{2})}
\prod_{\substack{1\le j\le n\\[1pt] j\ne i}}
\frac{(1-tz_i/z_j)(1-tz_iz_j)}{z_{i}^{-1}(1-z_i/z_j)(1-z_iz_j)},
\label{eq:F+i_2}\\
F_{-,i}(z)&:=F_{+,i}(z^{-1})=z_{i}^{2}\frac{\prod_{k=1}^6(1-a_k/z_i)}{1-1/z_{i}^{2}}
\prod_{\substack{1\le j\le n\\[1pt] j\ne i}}
\frac{(1-tz_j/z_i)(1-t/z_iz_j)}{(1-z_j/z_i)(1-1/z_iz_j)}
\label{eq:F-i_1}\\
&=-\frac{1}{z_{i}^{3+(n-1)}}
\frac{\prod_{k=1}^6(a_{k}-z_{i})}{z_{i}^{-1}(1-z_{i}^{2})}
\prod_{\substack{1\le j\le n\\[1pt] j\ne i}}
\frac{(t-z_i/z_j)(t-z_iz_j)}{z_{i}^{-1}(1-z_i/z_j)(1-z_iz_j)}. 
\label{eq:F-i_2}
\end{align}
Here we remark that 
\begin{equation}
\label{eq:Ratio}
\frac{T_{q,z_i}\tilde\Psi(z)}{\tilde\Psi(z)}
=
\frac{T_{q,z_i}\Psi(z)}{\Psi(z)}
=-\frac{F_{+,i}(z)}{T_{q,z_{i}}F_{-,i}(z)}.
\end{equation}
It is very important that the above equation is common for the functions $\Psi(z)$ and $\tilde\Psi(z)$, 
even if they themselves look different. 
The equation \eqref{eq:Ratio} will be used for the proof of a key lemma of Aomoto's method. 
\par
\medskip
For an arbitrary meromorphic function $\varphi(z)$ on $(\mathbb{C}^*)^n$,
we define the function $\nabla_{q,z_i}\varphi(z)$ by 
$$
\nabla_{q,z_{i}}\varphi(z):=F_{-,i}(z)\varphi(z)+F_{+,i}(z)T_{q,z_{i}}\varphi(z)
\quad\mbox{for}\quad i=1,\ldots,n.
$$
\begin{lem}
\label{lem:nabla varphi=0}
Suppose that $|a_i|<1$ $(i=1,\ldots,6)$ and $|t|<1$. 
For an arbitrary holomorphic function $\varphi(z)$ on $(\mathbb{C}^*)^n$, we have 
\begin{equation}
\label{eq:nabla varphi=0}
\la \nabla_{q,z_i}\varphi(z)\ra=0
\quad\mbox{for}\quad i=1,\ldots,n.
\end{equation}
\end{lem}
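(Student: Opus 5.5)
The plan is the standard $q$-Stokes argument: rewrite the $F_{+,i}$ part of $\la\nabla_{q,z_i}\varphi(z)\ra$ as a $q$-shift of the $F_{-,i}$ part, using the ratio formula \eqref{eq:Ratio}, and then move the contour back. Write $\la\nabla_{q,z_i}\varphi\ra=\la F_{-,i}\varphi\ra+\la F_{+,i}T_{q,z_i}\varphi\ra$. By \eqref{eq:Ratio} we have $F_{+,i}(z)\tilde\Psi(z)=-T_{q,z_i}F_{-,i}(z)\,T_{q,z_i}\tilde\Psi(z)$. Hence the second integrand is
$$
F_{+,i}(z)\,T_{q,z_i}\varphi(z)\,\tilde\Psi(z)=-T_{q,z_i}\big[F_{-,i}(z)\varphi(z)\tilde\Psi(z)\big].
$$
Put $g(z):=F_{-,i}(z)\varphi(z)\tilde\Psi(z)$. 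The statement then reduces to
$$
\int_{\mathbb{T}^n}T_{q,z_i}g(z)\,\varpi(z)=\int_{\mathbb{T}^n}g(z)\,\varpi(z).
$$

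Since $dz_i/z_i$ is invariant under $z_i\mapsto qz_i$, the left side equals the integral of $g$ over the torus in which the $i$-th circle $|z_i|=1$ is replaced by $|z_i|=|q|$, with the other $z_j$ kept on $|z_j|=1$. The identity then follows from Cauchy's theorem in the variable $z_i$, provided that for each fixed $z_j$ ($j\ne i$) on the unit circle, $g$ is holomorphic in $z_i$ on the closed annulus $|q|\le|z_i|\le1$. The only nontrivial content of the proof is this pole check.

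I would list the possible poles of $g$ in $z_i$ and show each either lies outside the annulus or is cancelled.
\begin{itemize}
\item The factors $\theta(\cdot)$, $(qa_k^{-1}z_i;q)_\infty$ and $(qt^{-1}z_iz_j;q)_\infty$ in the numerators are entire.
\item The factors $(a_kz_i;q)_\infty^{-1}$ and $(tz_iz_j;q)_\infty^{-1}$ have poles at $|z_i|>1$, hence outside the annulus.
\item The factors $(a_lz_i^{-1};q)_\infty^{-1}$ for $l=1,2$ have poles at $z_i=a_lq^m$ with $m\ge0$. Only $m=0$ can lie in the annulus (using $|a_l|<1$), and it is cancelled by the factor $(1-a_l/z_i)$ in $F_{-,i}$.
\item The factors $(tz_j/z_i;q)_\infty^{-1}$ have poles at $z_i=tz_jq^m$. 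Again only $m=0$ can fall in the annulus, and it is killed by $(1-tz_j/z_i)$ in $F_{-,i}$. The factors $(tz_i/z_j;q)_\infty^{-1}$ have poles outside.
\item Conversely, $F_{-,i}$ has poles at $z_i^2=1$, $z_i=z_j$ and $z_iz_j=1$. These are cancelled by the zeros of $\tilde\Psi$ coming from $(z_i^2;q)_\infty/(qz_i^2;q)_\infty=1-z_i^2$, from $(z_j/z_i;q)_\infty$, and from $(z_iz_j;q)_\infty/(qz_iz_j;q)_\infty=1-z_iz_j$.
\item The point $z_i=0$ is not in the closed annulus, and $\varphi$ is holomorphic on $(\mathbb{C}^*)^n$.
\end{itemize}
The effect of $F_{-,i}$ is therefore exactly to remove the first layer of poles of $\tilde\Psi$ that sits inside the annulus.

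The main obstacle is the boundary: some of these cancelled points, such as $z_i=z_j$ or $z_i=\pm1$, lie on the contour itself. It must be confirmed that after cancellation $g$ is genuinely holomorphic in a neighborhood of the closed annulus, so that Cauchy's theorem applies without principal-value issues. I would handle this by noting that each cancellation is an exact cancellation of simple factors, leaving $g$ holomorphic there. Where a pole such as $a_lq^m$ or $tz_jq^m$ could sit on $|z_i|=|q|$, I would first assume the parameters are generic, so that no remaining pole lies on the boundary circles, and then remove the genericity by continuity of both sides in $a_1,\ldots,a_6,t$. With holomorphy on the annulus established, $\la F_{+,i}T_{q,z_i}\varphi\ra=-\la F_{-,i}\varphi\ra$, which is \eqref{eq:nabla varphi=0}.
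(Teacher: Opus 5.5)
Your proposal is correct and follows the paper's proof essentially step for step. You use \eqref{eq:Ratio} to rewrite $F_{+,i}\,(T_{q,z_i}\varphi)\,\tilde\Psi$ as $-T_{q,z_i}[F_{-,i}\varphi\tilde\Psi]$, shift the $z_i$-circle to $|z_i|=|q|$, and apply Cauchy's theorem once $F_{-,i}\varphi\tilde\Psi$ is shown to have no poles in $|q|\le|z_i|\le 1$; the paper checks this by writing the product out explicitly, and you do it factor by factor. Your genericity-and-continuity step is unnecessary, though: since $|a_l|<1$, $|t|<1$ and $|z_j|=1$, the surviving poles $a_lq^m$ and $tz_jq^m$ with $m\ge1$ already lie strictly inside $|z_i|<|q|$, so the closed annulus is pole-free as it stands.
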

\proof 
Without loss of generality, we prove \eqref{eq:nabla varphi=0} for $i=1$, i.e., $\la \nabla_{q,z_1}\varphi(z)\ra=0$, which is equivalent to 
\begin{equation}
\label{eq:intG1phiPsi=}
\int_{\mathbb{T}^n}
F_{-,1}(z)\varphi(z)\tilde\Psi(z)\varpi(z)
=
-\int_{\mathbb{T}^n}
F_{+,1}(z)\{T_{q,z_1}\varphi(z)\}\tilde\Psi(z)\varpi(z).
\end{equation}
Since we have $F_{+,1}(z)\tilde\Psi(z)=-T_{q,z_1}F_{-,1}(z)\,T_{q,z_1}\tilde\Psi(z)$ from \eqref{eq:Ratio}, 
the right-hand side of \eqref{eq:intG1phiPsi=} is calculated as 
\begin{equation}
\label{eq:intF1TphiPsi=}
\begin{split}
&-\int_{\mathbb{T}^n}
F_{+,1}(z)\{T_{q,z_1}\varphi(z)\}\tilde\Psi(z)\varpi(z)=
\int_{\mathbb{T}^n}T_{q,z_1}\{F_{-,1}(z)\varphi(z)\tilde\Psi(z)\}\varpi(z)\\
&\ =\frac{1}{(2\pi\sqrt{-1})^n}\int_{\mathbb{T}^{n-1}}
\Bigg(\int_{|z_1|=|q|}F_{-,1}(z)\varphi(z)\tilde\Psi(z)\frac{dz_1}{z_1}\Bigg)
\frac{dz_2}{z_2}\cdots\frac{dz_n}{z_n},
\end{split}
\end{equation}
which is due to the variable change $z_1\to qz_1$. 
By definition, the integrand is written explicitly as 
\begin{equation}
\label{eq:G1phiPsi}
\begin{split}
&F_{-,1}(z)\varphi(z)\tilde\Psi(z)\\
&\quad=
-\frac{\varphi(z)}{z_{1}^{2}}
\frac{\theta(qt^{n-1}a_{3}a_{4}a_{5}a_{6}v^{-1}z_{1}, qvz_{1};q)}{\prod_{j=1}^{2}(a_jz_{1},qa_jz_{1}^{-1};q)_{\infty}}
\\&\qquad\times
\prod_{m=3}^{6}\frac{(a_{m}^{-1}z_{1};q)_{\infty}}{(a_{m}z_{1};q)_{\infty}}
\prod_{k=2}^{n}
\frac{(z_{1}/z_{k},qz_{k}/z_{1},t^{-1}z_{1}z_{k};q)_{\infty}}
{(tz_{1}/z_{k},qtz_{k}/z_{1},tz_{1}z_{k};q)_{\infty}}
\\&\qquad
\times\prod_{i=2}^{n}
\bigg[
(1-z_{i}^{2})
\frac{\theta(t^{n-1}a_{1}a_{2}vz_{i}^{-1}, vz_{i};q)}{\prod_{j=1}^{2}(a_jz_{i},a_jz_{i}^{-1};q)_{\infty}}
\prod_{m=3}^{6}\frac{(qa_{m}^{-1}z_{i};q)_{\infty}}{(a_{m}z_{i};q)_{\infty}}
\bigg]\\
&\qquad\times\prod_{2\le j< k\le n}
(1-z_{j}z_{k})
\frac{(z_{j}/z_{k},z_{k}/z_{j};q)_{\infty}}
{(tz_{j}/z_{k},tz_{k}/z_{j};q)_{\infty}}
\frac{(qt^{-1}z_{j}z_{k};q)_{\infty}}
{(tz_{j}z_{k};q)_{\infty}}.
\end{split}
\end{equation}
Focussing attention on the denominator of \eqref{eq:G1phiPsi},
when $|a_i|<1$ $(i=1,\ldots,6)$ and $|t|<1$ are satisfied, and $z_k$ $(2\le k\le n)$ are fixed as $|z_k|=1$, 
we see the right-hand side of \eqref{eq:G1phiPsi} as a function of $z_1$ has no poles in the annulus $|q|\le |z_1|\le 1$, allowing for contour deformation. 
Hence, by Cauchy's integral theorem we have 
\begin{equation}
\label{eq:Cauchy}
\int_{|z_1|=|q|}
F_{-,i}(z)\varphi(z)\tilde\Psi(z)\frac{dz_1}{z_1}
=
\int_{|z_1|=1}
F_{-,i}(z)\varphi(z)\tilde\Psi(z)\frac{dz_1}{z_1}.
\end{equation}
From \eqref{eq:intF1TphiPsi=} and \eqref{eq:Cauchy}, we obtain \eqref{eq:intG1phiPsi=}. 
\qed
\medskip
\noindent
{\bf Remark.} Since the equation \eqref{eq:Ratio} is common for $\Psi(z)$ and $\tilde\Psi(z)$, 
Lemma \ref{lem:nabla varphi=0} is also valid for the integral of $\Psi(z)$ (Gustafson's integral case). It means that, if we define 
$\la \varphi(z)\ra:=\frac{1}{2^{n}n!}\int\varphi(z)\Psi(z)\varpi(z)$ instead of \eqref{eq:la-ra}, then \eqref{eq:nabla varphi=0} remains valid. 
\par
\medskip
Let 
$E_r(z)$ $(0\le r\le n)$ be the fundamental invariants 
specified by \eqref{eq:E_r(z)}, 
and let $\zeta_j$ $(0\le j\le n)$ be the points in $(\mathbb{C}^*)^n$ specified by \eqref{eq:zeta_j}. 
When we need to specify the number of variables, we use the notation $E^{(n)}_r(z)$ and 
$\zeta_j^{(n)}$ instead of $E_r(z)$ and $\zeta_j$, respectively.
We set the functions $h_r(z)$ $(r=1,\ldots,n)$ as 
\begin{equation}
\label{eq:hr-0}
h_r(z)=\sum_{i=1}^n\nabla_{q,z_i}E_{r-1}^{(n-1)}(z_{\,\widehat{i}\,})
=
\sum_{i=1}^n\Big(F_{-,i}(z)+F_{+,i}(z)\Big)E_{r-1}^{(n-1)}(z_{\,\widehat{i}\,}),
\end{equation}
where $z_{\,\widehat{i}\,}=(z_1,\ldots,z_{i-1},z_{i+1},\ldots,z_n)$. 
Then Lemma \ref{lem:nabla varphi=0} implies that  
\begin{equation}
\label{eq:la hr ra=0}
\la h_r(z)\ra=\sum_{i=1}^n\la\nabla_{q,z_i}E_{r-1}^{(n-1)}(z_{\,\widehat{i}\,})\ra=0.
\end{equation}
\begin{lem}
\label{lem:hr}
Suppose that $a_1a_2a_3a_{4}a_{5}a_{6}t^{2n-2}=1$. 
\begin{itemize}
\item [{\rm (1)}] For each $r=1,\ldots,n$, $h_r(z)$ belongs to 
${\sf H}_{1,n}$. 
\item [{\rm (2)}]
The polynomial $h_r(z)$ is expanded in terms of $E_i(z)$ as 
$$
h_r(z)=c_{r,r}E_r(z)+ c_{r,r-1}E_{r-1}(z),
$$
where the coefficients $c_{r,r}$ and $c_{r,r-1}$ are given by 
$$
c_{r,r}=F_{+,r}(\zeta_r),\qquad c_{r,r-1}=F_{+,n}(\zeta_{r-1}).
$$
\end{itemize}
\end{lem}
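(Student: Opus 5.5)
The plan is to treat the two parts separately. For (1) I will use an antisymmetrization argument together with a degree count, and this is where the balancing condition enters. For (2) I will evaluate $h_r$ at the points $\zeta_j$ and invoke Proposition \ref{prop:E(zeta)=delta}.

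\emph{Part (1).} First, $h_r$ is $W$-invariant.
\begin{itemize}
\item A permutation of the variables only permutes the summands in \eqref{eq:hr-0}, because $E^{(n-1)}_{r-1}$ is symmetric in its $n-1$ arguments.
\item Inverting $z_i$ interchanges $F_{+,i}$ and $F_{-,i}$, since $F_{-,i}(z)=F_{+,i}(z^{-1})$. It leaves $E^{(n-1)}_{r-1}(z_{\widehat{i}})$ unchanged.
\item Inverting $z_j$ with $j\ne i$ fixes $F_{\pm,i}$ and fixes $E^{(n-1)}_{r-1}(z_{\widehat i})$, because the latter is $W_{n-1}$-invariant.
\end{itemize}

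Next I will show that $h_r$ is a Laurent polynomial. Multiply by the $BC_n$ Weyl denominator
$$\Delta(z)=\prod_{i}z_i^{-1}(1-z_i^2)\prod_{j<k}z_j^{-1}(1-z_jz_k)(1-z_j/z_k).$$
By the forms \eqref{eq:F+i_2} and \eqref{eq:F-i_2}, each product $F_{\pm,i}\Delta$ is a Laurent polynomial. Consequently $\Delta h_r$ is a $W$-anti-invariant Laurent polynomial, so it is divisible by $\Delta$, and $h_r$ is a $W$-invariant Laurent polynomial.

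It remains to bound the degrees; by $W$-invariance only the behaviour as $z_m\to\infty$ matters.
\begin{itemize}
\item Fix $m$ and take a summand with $i\ne m$. As $z_m\to\infty$, the factor $F_{\pm,i}$ stays bounded, since its $z_m$-dependent factors tend to constants. The factor $E^{(n-1)}_{r-1}(z_{\widehat i})$ has degree at most $1$ in $z_m$. So these summands contribute degree at most $1$.
\item For the summand with $i=m$, the factor $E^{(n-1)}_{r-1}(z_{\widehat m})$ is independent of $z_m$. Directly from \eqref{eq:F+i_1} and \eqref{eq:F-i_1} one finds $F_{+,m}\sim -a_1\cdots a_6t^{2n-2}z_m^2$ and $F_{-,m}\sim z_m^2$.
\item Hence the $z_m^2$ coefficient of $F_{+,m}+F_{-,m}$ is proportional to $1-a_1\cdots a_6t^{2n-2}$, which vanishes under the hypothesis. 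Therefore $\deg_{z_m}h_r\le 1$.
\end{itemize}
I expect the main obstacle to be checking this leading-order cancellation carefully, including signs and the powers of $t$ coming from the cross terms. I would verify it first in the case $n=1$.

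\emph{Part (2).} Since $h_r\in{\sf H}_{1,n}$, Proposition \ref{prop:E(zeta)=delta} gives $h_r=\sum_j h_r(\zeta_j)E_j$, so it suffices to compute $h_r(\zeta_j)$.

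I will first show that $F_{-,i}(\zeta_j)=0$ for every $i$.
\begin{itemize}
\item If the coordinate $z_i$ of $\zeta_j$ is the first entry of its string, it equals $a_1$ or $a_2$. Then the factor $(1-a_k/z_i)$ with $k=1$ or $2$ vanishes.
\item Otherwise $z_i=t z_{i'}$ for the preceding coordinate $z_{i'}$ of the same string, and the factor $(1-tz_{i'}/z_i)$ vanishes.
\end{itemize}

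Next, the factor $(1-tz_i/z_{i''})$ appears in $F_{+,i}$ for every $i''\ne i$. It vanishes at $\zeta_j$ unless $z_i$ is the last entry of its string, that is, unless $i=j$ (end of the $a_1$-string) or $i=n$ (end of the $a_2$-string). I assume generic parameters, so that no denominator of $F_{\pm,i}$ vanishes at $\zeta_j$; the boundary cases $j=0$ and $j=n$ only remove one of the two strings.

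Deleting the last entry of the $a_1$-string of $\zeta_j$ gives $\zeta^{(n-1)}_{j-1}$. Deleting the last entry of the $a_2$-string gives $\zeta^{(n-1)}_j$. Hence
$$h_r(\zeta_j)=F_{+,j}(\zeta_j)E^{(n-1)}_{r-1}(\zeta^{(n-1)}_{j-1})+F_{+,n}(\zeta_j)E^{(n-1)}_{r-1}(\zeta^{(n-1)}_{j}).$$
Applying Proposition \ref{prop:E(zeta)=delta} in $n-1$ variables, the first term is nonzero only when $j=r$ and the second only when $j=r-1$. This yields $c_{r,r}=F_{+,r}(\zeta_r)$ and $c_{r,r-1}=F_{+,n}(\zeta_{r-1})$, with all other coefficients zero.
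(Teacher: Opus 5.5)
Your proposal is correct and follows essentially the same route as the paper. Part (1) uses $W$-invariance, divisibility of the anti-invariant $\Delta h_r$ by the Weyl denominator, and a degree bound in which the top-order term is killed by $a_1\cdots a_6t^{2n-2}=1$; part (2) evaluates at $\zeta_j$ using $F_{-,i}(\zeta_j)=0$, $F_{+,i}(\zeta_j)=0$ for $i\ne j,n$, and the interpolation property of $E^{(n-1)}_{r-1}$. Your summand-by-summand asymptotics as $z_m\to\infty$ is a cleaner rendering of the paper's highest-monomial computation in the antisymmetrized numerator, and your leading terms $F_{+,m}\sim -a_1\cdots a_6t^{2n-2}z_m^2$ and $F_{-,m}\sim z_m^2$ are correct.
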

\proof 
(1) First we show $h_r(z)$ is 
$W$-symmetric Laurent polynomial. In order to confirm this we slightly deform 
the expression \eqref{eq:hr-0} of $h_r(z)$. 
Since $F_{+,i}(z)$ and $F_{-,i}(z)$ given in \eqref{eq:F+i_2} and \eqref{eq:F-i_2}
are written as
\begin{equation*}
F_{+,i}(z)=\bar{F}_{+,i}(z)\frac{\Delta^{(n-1)}(z_{\,\widehat{i}\,})}{\Delta^{(n)}(z)},
\quad 
F_{-,i}(z)=-\bar{F}_{-,i}(z)\frac{\Delta^{(n-1)}(z_{\,\widehat{i}\,})}{\Delta^{(n)}(z)},
\end{equation*}
where $\bar{F}_{+,i}(z)$ and $\bar{F}_{-,i}(z)$ are specified as 
\begin{align*}
\bar{F}_{+,i}(z)&=\frac{\prod_{k=1}^6(1-a_kz_i)}{z_{i}^{3}}
\prod_{\substack{1\le j\le n\\[1pt] j\ne i}}
\frac{(1-tz_i/z_j)(1-tz_iz_j)}{z_{i}},
\\
\bar{F}_{-,i}(z)&=\frac{\prod_{k=1}^6(a_{k}-z_i)}{z_{i}^{3}}
\prod_{\substack{1\le j\le n\\[1pt] j\ne i}}
\frac{(t-z_i/z_j)(t-z_iz_j)}{z_{i}}
\end{align*}
and $\Delta^{(n)}(z)$ is the Weyl denominator of type $C_{n}$ defined as 
$$
\Delta^{(n)}(z)=\prod_{i=1}^{n}\frac{1-z_{i}^{2}}{z_{i}}\prod_{1\le j<k\le n}\frac{(1-z_{j}/z_{k})(1-z_{j}z_{k})}{z_{j}}, 
$$
we have
\begin{align}
&h_r(z)=\sum_{i=1}^n\Big(F_{-,i}(z)+F_{+,i}(z)\Big)E_{r-1}^{(n-1)}(z_{\,\widehat{i}\,})\nonumber\\
&\quad=\frac{1}{\Delta^{(n)}(z)}
\sum_{i=1}^n(-1)^{i-1}\Big(\bar{F}_{+,i}(z)-\bar{F}_{-,i}(z)\Big)E_{r-1}^{(n-1)}(z_{\,\widehat{i}}\,)\Delta^{(n-1)}(z_{\,\widehat{i}\,}).
\label{eq:hr-1}
\end{align}
Since the polynomial 
$
\sum_{i=1}^n(-1)^{i}\big(\bar{F}_{+,i}(z)-\bar{F}_{-,i}(z)\big)\Delta^{(n-1)}(z_{\,\widehat{i}\,})E_{r-1}^{(n-1)}(z_{\,\widehat{i}}\,)
$
is $W$-skew-symmetric, it is divisible by $\Delta^{(n)}(z)$, 
so that \eqref{eq:hr-1} itself come to a $W$-symmetric polynomial. 

Next we confirm that $\deg_{z_i}h_r(z)\le 1$ $(i=1,\ldots,n)$. 
The expression \eqref{eq:hr-1} of $h_r(z)$ is rewritten as 
\begin{equation}
\label{eq:hr-2}
h_r(z)=\frac{1}{2^{n}(n-1)!\Delta^{(n)}(z)}\,
\mathcal{A}\Big[\Big(\bar{F}_{+,1}(z)-\bar{F}_{-,1}(z)\Big)E_{r-1}^{(n-1)}(z_{\,\widehat{1}}\,)\Delta^{(n-1)}(z_{\,\widehat{1}\,})\Big],
\end{equation}
where the symbol $\mathcal{A}$ means the $W$-skew-symmetrization
defined by 
%
$
\mathcal{A}f(z)=\sum_{w\in W}({\rm sgn}\,w)\, w.f(z).
$
In the expansion of 
\begin{align*}
&\Big(\bar{F}_{+,1}(z)-\bar{F}_{-,1}(z)\Big)E_{r-1}^{(n-1)}(z_{\,\widehat{1}}\,)\Delta^{(n-1)}(z_{\,\widehat{1}\,})\\
&\quad =
z_1^{-3-(n-1)}
\bigg[(1-a_{1}z_1)\cdots(1-a_{6}z_1)\prod_{j=2}^n
(1-tz_1/z_{j})(1-tz_1z_{j})\\
&\qquad
-(a_{1}-z_1)\cdots(a_{6}-z_1)\prod_{j=2}^n
(t-z_1/z_{j})(t-z_1z_{j})\bigg]
E_{r-1}^{(n-1)}(z_{\,\widehat{1}}\,)\Delta^{(n-1)}(z_{\,\widehat{1}\,}),
\end{align*}
the highest term $(a_1\cdots a_6t^{2n-2}-1)z_1^{n+2}\times z_2\cdots z_n\times z_2^{n-1}z_3^{n-2}\cdots z_{n}$ vanishes under the assumption $a_1a_2a_3a_4a_5a_6t^{2n-2}=1$
(and of course, the lowest term also vanishes in the same manner). It follows that  
$$\mathcal{A}(z_1^{n+1}\times z_2\cdots z_n\times z_2^{n-1}z_3^{n-2}\cdots z_{n})
=\mathcal{A}(z_1z_2\cdots z_n\times z_1^{n}z_2^{n-1}\cdots z_{n})
$$
is now the term of highest degree up to constant in the numerator of \eqref{eq:hr-2}. 
Therefore, from the expression \eqref{eq:hr-2}, the $W$-symmetric Laurent polynomial 
$$\frac{\mathcal{A}(z_1z_2\cdots z_n\times z_1^{n}z_2^{n-1}\cdots z_{n})}{\Delta^{(n)}(z)}\in {\sf H}_{1,n}$$
is of highest degree in $h_r(z)$, so that $h_r(z)\in {\sf H}_{1,n}$.\\
(2)
Since $\{E_{i}(z)\}$ is a $\mathbb{C}$-basis of ${\sf H}_{1,n}$ and $h_r(z)\in {\sf H}_{1,n}$, $h_r(z)$ is expanded as
$
h_r(z)=\sum_{j=0}^nc_{r,j}E_j(z)
$. 
Using $E_i(\zeta_j)=\delta_{ij}$ in Proposition \ref{prop:E(zeta)=delta}, we have 
$$c_{r,j}=h_r(\zeta_j)
=\sum_{i=1}^n\Big(F_{-,i}(\zeta_j)+F_{+,i}(\zeta_j)\Big)E_{r-1}^{(n-1)}(z_{\,\widehat{i}\,})\Big|_{z=\zeta_j}.$$
By the definitions \eqref{eq:F+i_1} and \eqref{eq:F-i_1}, 
$F_{+,i}(z)$ and $F_{-,i}(z)$ satisfy the vanishing property
$$
F_{-,i}(\zeta_j)=0 \quad\mbox{if}\quad 0\le j\le n,\qquad F_{+,i}(\zeta_j)=0 \quad\mbox{if}\quad i\ne j,n \ \mbox{ and }\ 0\le j\le n. 
$$
Therefore, we have that $c_{r,j}$ for $1\le j\le n-1$ can be written as 
\begin{align*}
&c_{r,j}=h_r(\zeta_j)=
F_{+,j}(\zeta_j)E_{r-1}^{(n-1)}(z_{\,\widehat{j}\,})\Big|_{z=\zeta_j}
+F_{+,n}(\zeta_j)E_{r-1}^{(n-1)}(z_{\,\widehat{n}\,})\Big|_{z=\zeta_j}
\\[5pt]&
\ =F_{+,j}(\zeta_j)E_{r-1}^{(n-1)}(\zeta_{j-1}^{(n-1)})
+F_{+,n}(\zeta_j)E_{r-1}^{(n-1)}(\zeta_{j}^{(n-1)})
=\delta_{r,j}F_{+,j}(\zeta_j)%
+\delta_{r-1,j}F_{+,n}(\zeta_j).%
\end{align*}
In the same way 
we also have $c_{r,j}$ for $j=0$ and $j=n$ can be written 
\begin{align*}
c_{r,0}&=h_r(\zeta_0)=F_{+,n}(\zeta_0)E_{r-1}^{(n-1)}(z_{\,\widehat{n}\,})\Big|_{z=\zeta_0}
=F_{+,n}(\zeta_0)E_{r-1}^{(n-1)}(\zeta_{0}^{(n-1)})=\delta_{r,1}F_{+,n}(\zeta_0),\\
c_{r,n}&=h_r(\zeta_n)=F_{+,n}(\zeta_n)E_{r-1}^{(n-1)}(z_{\,\widehat{n}\,})\Big|_{z=\zeta_n}
=F_{+,n}(\zeta_n)E_{r-1}^{(n-1)}(\zeta_{n-1}^{(n-1)})=\delta_{r,n}F_{+,n}(\zeta_n).
\end{align*}
Thus $c_{r,j}$ vanish if $j\ne r, r-1$, and we obtain 
$c_{r,r}=F_{+,r}(\zeta_r)$ and $c_{r,r-1}=F_{+,n}(\zeta_{r-1})$. 
\qed
\begin{lem} 
\label{lem:Er=Er-1***}
Suppose that $a_1a_2a_3a_4a_5a_6t^{2n-2}=1$. 
For $r=1,\ldots,n$ we have
\begin{equation}
\label{eq:Er=Er-1***}
\begin{split}
\la E_{r}(z)\ra
&= -\la E_{r-1}(z)\ra 
\prod_{k=3}^6\frac{1-a_2a_kt^{n-r}}{1-a_1a_kt^{r-1}}
\\&\qquad\times\frac
{a_1^{2}t^{2r-2}(1-t^{n-r+1})(1-a_1a_2^{-1}t^{-n+2r})(1-a_{2}a_{1}^{-1}t^{n-r+1})}
{a_2^{2}t^{2n-2r}(1-t^{r})(1-a_{1}a_{2}^{-1}t^{r})(1-a_2a_1^{-1}t^{n-2r+2})}.
\end{split}
\end{equation}
\end{lem}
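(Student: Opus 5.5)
The plan is to combine Lemma \ref{lem:nabla varphi=0} with Lemma \ref{lem:hr}. By \eqref{eq:la hr ra=0} we have $\la h_r(z)\ra=0$. Lemma \ref{lem:hr}(2) gives $h_r(z)=c_{r,r}E_r(z)+c_{r,r-1}E_{r-1}(z)$ under $a_1\cdots a_6t^{2n-2}=1$. Hence
$$
\la E_r(z)\ra=-\frac{c_{r,r-1}}{c_{r,r}}\la E_{r-1}(z)\ra=-\frac{F_{+,n}(\zeta_{r-1})}{F_{+,r}(\zeta_r)}\la E_{r-1}(z)\ra .
$$
This assumes $c_{r,r}\neq0$, which holds for generic parameters; the identity then extends by analytic continuation. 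What remains is to evaluate this ratio explicitly from \eqref{eq:F+i_1}.

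\textbf{Numerator, $F_{+,n}(\zeta_{r-1})$.} At $z=\zeta_{r-1}$ we have $z_n=a_2t^{n-r}$, the last entry of the $a_2$-string of length $n-r+1$. The factor $\prod_k(1-a_kz_n)$ gives $(1-a_1a_2t^{n-r})(1-a_2^2t^{n-r})\prod_{k=3}^6(1-a_2a_kt^{n-r})$.

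For $j$ in the $a_2$-string ($z_j=a_2t^{l-1}$, $l\le n-r$), the ratio $(1-tz_n/z_j)/(1-z_n/z_j)$ telescopes over $l$ to $(1-t^{n-r+1})/(1-t)$. The ratio $(1-tz_nz_j)/(1-z_nz_j)$ also telescopes, leaving factors $a_2^2t^{\cdots}$ that partly cancel $(1-a_2^2t^{n-r})/(1-a_2^2t^{2n-2r})$.

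For $j$ in the $a_1$-string ($z_j=a_1t^{k-1}$, $k\le r-1$), the ratios $(1-tz_n/z_j)/(1-z_n/z_j)$ and $(1-tz_nz_j)/(1-z_nz_j)$ telescope in $k$. They produce $(1-a_2a_1^{-1}t^{n-r+1})/(1-a_2a_1^{-1}t^{n-2r+2})$ and a factor combining with $(1-a_1a_2t^{n-r})$.

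\textbf{Denominator, $F_{+,r}(\zeta_r)$.} Here $z_r=a_1t^{r-1}$ is the last entry of the $a_1$-string, and the computation is the symmetric one with the roles of $a_1$ and $a_2$ exchanged.

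\textbf{Simplification.} Form the quotient. The $(1-a_1a_2t^{\cdot})$ factors and the $a_1^2$, $a_2^2$ factors should cancel, leaving
$$
\prod_{k=3}^6\frac{1-a_2a_kt^{n-r}}{1-a_1a_kt^{r-1}}\cdot\frac{(1-t^{n-r+1})}{(1-t^r)}
$$
times the cross-string factors. Rewriting the factors of the form $(1-a_1a_2^{-1}t^{\cdot})$ via $1-x^{-1}=-x^{-1}(1-x)$ accounts for the monomials $a_1^2t^{2r-2}/a_2^2t^{2n-2r}$ in \eqref{eq:Er=Er-1***}. The prefactor $z_i^{-2}$ in \eqref{eq:F+i_1} also contributes to these monomials, via $(a_2t^{n-r})^{-2}/(a_1t^{r-1})^{-2}$.

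\textbf{Main obstacle.} The chief difficulty is bookkeeping: tracking exactly which telescoped factors survive and checking that the signs and powers of $a_1,a_2,t$ match \eqref{eq:Er=Er-1***}. I would first verify the formula for $n=1,2$ as a check. The structural argument is immediate from the two lemmas.
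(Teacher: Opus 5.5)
Your proposal is the paper's proof. $\langle h_r(z)\rangle=0$ together with Lemma~\ref{lem:hr}(2) gives $\langle E_r(z)\rangle=-\bigl(F_{+,n}(\zeta_{r-1})/F_{+,r}(\zeta_r)\bigr)\langle E_{r-1}(z)\rangle$, and the telescoping you outline gives exactly the paper's closed forms for $F_{+,n}(\zeta_{r-1})$ and $F_{+,r}(\zeta_r)$.

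One bookkeeping correction. In each evaluation the $(1-a_2^2t^{\cdot})$ factors (resp.\ $(1-a_1^2t^{\cdot})$) cancel completely against $(1-a_2z_n)/(1-z_n^2)$ (resp.\ $(1-a_1z_r)/(1-z_r^2)$), not partly. The monomial $a_1^2t^{2r-2}/(a_2^2t^{2n-2r})$ comes entirely from the $z_i^{-2}$ prefactor. So no rewriting via $1-x^{-1}=-x^{-1}(1-x)$ is needed, and doing it would introduce extra monomials that do not appear in \eqref{eq:Er=Er-1***}.
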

\proof
Since we have $\la h_r(z)\ra=0$ by \eqref{eq:la hr ra=0}, under $a_1a_2a_3a_4a_5a_6t^{2n-2}=1$, 
Lemma \ref{lem:hr} (2) implies
$$\la E_r(z)\ra=-\frac{c_{r,r-1}}{c_{r,r}}\la E_{r-1}(z)\ra=-\frac{F_{+,n}(\zeta_{r-1})}{F_{+,r}(\zeta_{r})}\la E_{r-1}(z)\ra. $$
The proportionality of \eqref{eq:Er=Er-1***} is 
then seen to be due to the direct evaluations
\begin{equation*}
\begin{split}
F_{+,n}(\zeta_{i-1})&=
\frac{(1-t^{n-i+1})(1-a_1a_2t^{n-1})(1-a_2a_1^{-1}t^{n-i+1})}{(a_2t^{n-i})^{2}(1-t)(1-a_2a_1^{-1}t^{n-2i+2})}
\prod_{k=3}^6(1-a_ka_2t^{n-i}),\\
F_{+,i}(\zeta_i)&=\frac{(1-t^{i})(1-a_1a_2t^{n-1})(1-a_{1}a_{2}^{-1}t^{_{i}})}{(a_1t^{i-1})^{2}(1-t)(1-a_1a_2^{-1}t^{-n+2i})}
\prod_{k=3}^6(1-a_ka_{1}t^{i-1}),\\
\end{split}
\end{equation*}
 for $0\le i\le n$. 
 \qed
\medskip
\noindent
\begin{cor} 
\label{cor:En=E0***}
Suppose that $a_1a_2a_3a_4a_5a_6t^{2n-2}=1$. Then we have 
\begin{equation}
\label{eq:En=E0***}
\la E_n(z)\ra=\la E_0(z)\ra
\frac{a_{1}^{3n}}{a_{2}^{3n}}\prod_{i=1}^n
\bigg[
\frac{1-a_{2}a_{1}^{-1}t^{i-1}}{1-a_{1}a_{2}^{-1}t^{i-1}}
\prod_{k=3}^6\frac{1-a_2a_kt^{i-1}}{1-a_1a_kt^{i-1}}
\bigg].
\end{equation}
\end{cor}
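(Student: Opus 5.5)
The plan is to iterate the two-term relation of Lemma~\ref{lem:Er=Er-1***} for $r=n,n-1,\ldots,1$. This expresses $\la E_n(z)\ra$ as $\la E_0(z)\ra$ times the product over $r=1,\ldots,n$ of the proportionality factors in \eqref{eq:Er=Er-1***}, including the sign $(-1)^n$. The balancing condition $a_1\cdots a_6t^{2n-2}=1$ is the one assumed in that lemma, so nothing more is needed analytically. What remains is to show that this finite product collapses to the right-hand side of \eqref{eq:En=E0***}; I would treat the factors of \eqref{eq:Er=Er-1***} group by group.

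\emph{The $a_k$-factors.} Reindexing the numerator by $i=n-r+1$ turns $\prod_{r=1}^n\prod_{k=3}^6(1-a_2a_kt^{n-r})/(1-a_1a_kt^{r-1})$ into $\prod_{i=1}^n\prod_{k=3}^6(1-a_2a_kt^{i-1})/(1-a_1a_kt^{i-1})$, which is exactly the product required in \eqref{eq:En=E0***}.

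\emph{Monomials and $t$-factors.} The monomials contribute
\[
\prod_{r=1}^n\frac{a_1^2t^{2r-2}}{a_2^2t^{2n-2r}}=\frac{a_1^{2n}}{a_2^{2n}},
\]
because the total $t$-exponent is $\sum_r(4r-2-2n)=0$. The ratio $\prod_r(1-t^{n-r+1})/(1-t^r)$ telescopes to $1$.

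\emph{The remaining four factors.} Put $x=a_1/a_2$. The remaining product is
\[
\prod_{r=1}^n\frac{(1-xt^{2r-n})(1-x^{-1}t^{n-r+1})}{(1-xt^{r})(1-x^{-1}t^{n-2r+2})}.
\]
This is the only place where some care is needed, and I expect the sign and power bookkeeping here to be the main obstacle. First I rewrite $1-xt^{2r-n}=-xt^{2r-n}(1-x^{-1}t^{n-2r})$. Since $\sum_r(2r-n)=n$, this produces $(-1)^nx^nt^n$. The products $\prod_r(1-x^{-1}t^{n-2r})$ and $\prod_r(1-x^{-1}t^{n-2r+2})$ differ by a shift of $r$, so their ratio telescopes to $(1-x^{-1}t^{-n})/(1-x^{-1}t^{n})$. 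Inverting $1-x^{-1}t^{-n}=-x^{-1}t^{-n}(1-xt^n)$ then gives
\[
(-1)^{n+1}x^{n-1}\,\frac{1-xt^n}{1-x^{-1}t^n}.
\]
The other pair of factors becomes, after reindexing, $\prod_{i=1}^n(1-x^{-1}t^i)/(1-xt^i)$. Its $i=n$ term cancels the fraction just obtained. Shifting indices down by one and using $(1-x^{-1})/(1-x)=-x^{-1}$, this whole piece becomes
\[
(-1)^nx^n\prod_{i=1}^n\frac{1-x^{-1}t^{i-1}}{1-xt^{i-1}}.
\]

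\emph{Assembling.} The sign $(-1)^n$ cancels against the $(-1)^n$ coming from the lemma. The factor $x^n$ combines with $a_1^{2n}/a_2^{2n}=x^{2n}$ to give $a_1^{3n}/a_2^{3n}$. Finally, $(1-x^{-1}t^{i-1})/(1-xt^{i-1})=(1-a_2a_1^{-1}t^{i-1})/(1-a_1a_2^{-1}t^{i-1})$, which yields \eqref{eq:En=E0***}.
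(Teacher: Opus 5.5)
Your proposal is correct and takes the same approach as the paper: iterate Lemma~\ref{lem:Er=Er-1***} over $r=1,\ldots,n$ and simplify the resulting product, which the paper compresses into ``after simplification''. Your bookkeeping checks out and produces the extra factor $x^n$ and the sign $(-1)^n$ needed to reach $a_1^{3n}/a_2^{3n}$. This covers the exponent sums $\sum_r(4r-2-2n)=0$ and $\sum_r(2r-n)=n$, the two telescopings, and the identity $(1-x)/(1-x^{-1})=-x$.
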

\proof 
Lemma \ref{lem:Er=Er-1***} implies  that
\begin{align*}
&\frac{\la E_n(z)\ra}{\la E_0(z)\ra}
=\prod_{r=1}^n \frac{\la E_{r}(z)\ra}{\la E_{r-1}(z)\ra}
=(-1)^{n}\prod_{r=1}^n \bigg\{\prod_{k=3}^6\frac{1-a_2a_kt^{n-r}}{1-a_1a_kt^{r-1}}\\
&
\qquad\qquad\qquad\times \frac{a_1^{2}t^{2r-2}(1-t^{n-r+1})(1-a_1a_2^{-1}t^{-n+2r})(1-a_{2}a_{1}^{-1}t^{n-r+1})}
{a_2^{2}t^{2n-2r}(1-t^{r})(1-a_{1}a_{2}^{-1}t^{r})(1-a_2a_1^{-1}t^{n-2r+2})}
\bigg\}.
\end{align*}
After simplification this reduces to 
\eqref{eq:En=E0***}. \qed
%
\section{$q$-Difference equation and the proof of Lemma \ref{lem:recurrenceJ}}
\label{section:5}
Based on the results of the previous section, 
in this section we prove Lemma \ref{lem:recurrenceJ}. 
For this, we will replace the position of $a_{2}$ in the fundamental $BC$ invariants $\{E_{i}(z)\}$ with that of $a_{6}$.  
In order to investigate the dependence of the integral on the parameters $(a_{1},\ldots,a_{6})$, we set 
$$\mathcal{J}(a_{1},\ldots,a_{6})=\la 1\ra=\frac{1}{n!}\int_{\mathbb{T}^n}\tilde\Psi(z)\varpi(z).$$
From the definition \eqref{eq:tPsi,Psi} of $\tilde\Psi(z)$ with the setting $v=a_{6}$,  we have 
\begin{align*}
\frac{T_{q,a_1}\tilde\Psi(z)}{\tilde\Psi(z)}
&=\prod_{i=1}^{n}(1-a_{1}z_{i})(1-a_{1}/z_{i})
=a_{1}^{n}\prod_{i=1}^{n}{\sf e}(a_{1},z_{i})
=E_0(z)\prod_{i=1}^n a_{1} {\sf e}(a_{1},a_{6}t^{i-1}),\\
\frac{T_{q,a_6}\tilde\Psi(z)}{\tilde\Psi(z)}
&=\prod_{i=1}^{n}(1-a_{6}^{-1}z_{i})(1-a_{6}^{-1}/z_{i})=a_{6}^{-n}\prod_{i=1}^{n}{\sf e}(a_{6},z_{i})
=E_n(z)\prod_{i=1}^n a_{6}^{-1} {\sf e}(a_{6},a_{1}t^{i-1}).
\end{align*}
Consequently 
$$
\mathcal{J}(a_{1},\ldots,qa_{6})=\la E_n(z) \ra \prod_{i=1}^n a_{6}^{-1} {\sf e}(a_{6},a_{1}t^{i-1}),
\quad
\mathcal{J}(qa_{1},\ldots,a_{6})=\la E_0(z) \ra \prod_{i=1}^na_{1} {\sf e}(a_{1},a_{6}t^{i-1}).
$$
By \eqref{eq:En=E0***} in Corollary \ref{cor:En=E0***}, we then obtain 
\begin{equation}
\label{eq:J/J}
\begin{split}
&\frac{\mathcal{J}(a_{1},\ldots,qa_{6})}{\mathcal{J}(qa_{1},\ldots,a_{6})}
=\frac{\la E_n(z)\ra}{\la E_0(z)\ra}
a_{1}^{-n}a_{6}^{-n}\prod_{i=1}^n\frac{{\sf e}(a_{6},a_{1}t^{i-1})}{{\sf e}(a_{1},a_{6}t^{i-1})}
\\&
\quad=\frac{a_{1}^{2n}}{a_{6}^{4n}}\prod_{i=1}^n
\bigg[
\frac{{\sf e}(a_{1}t^{i-1},a_{6})(1-a_{6}a_{1}^{-1}t^{i-1})}{{\sf e}(a_{6}t^{i-1},a_{1})(1-a_{1}a_{6}^{-1}t^{i-1})}
\prod_{k=2}^5\frac{1-a_6a_kt^{i-1}}{1-a_1a_kt^{i-1}}
\bigg]
\\&
\quad=\frac{a_{1}^{2n}}{a_{6}^{4n}}\prod_{i=1}^n
\bigg[
\frac{a_{6}}{a_{1}}
\prod_{k=2}^5\frac{1-a_6a_kt^{i-1}}{1-a_1a_kt^{i-1}}
\bigg]
\\&
\quad=
(a_{1}a_{2}a_{3}a_{4}a_{5}a_{6}t^{2n-2})^{n}\prod_{i=1}^n
\prod_{k=2}^5\frac{1-a_6^{-1}a_k^{-1}t^{-(i-1)}}{1-a_1a_kt^{i-1}}
\\&
\quad=\prod_{i=1}^n\prod_{k=2}^5\frac{1-a_6^{-1}a_k^{-1}t^{-(i-1)}}{1-a_1a_kt^{i-1}},
\end{split}
\end{equation}
where $a_1a_2a_3a_4a_5a_6t^{2n-2}=1$. 
\par
Now replace $a_6$ with $q^{-1}a_6$. The above
balancing condition is then restored to the form 
$a_1a_2a_3a_4a_5a_6t^{2n-2}=q$ in
Theorem \ref{thm:KBC}. 
Further, we have the $q$-difference equation
$$
\frac{\mathcal{J}(a_{1},a_{2},\ldots,a_{5},a_{6})}{\mathcal{J}(qa_{1},a_{2},\ldots,a_{5},q^{-1}a_{6})}
=\prod_{i=1}^n\prod_{k=2}^5\frac{1-qa_6^{-1}a_k^{-1}t^{-(i-1)}}{1-a_1a_kt^{i-1}}.
$$
This is what we wanted to prove in Lemma \ref{lem:recurrenceJ}. 
\appendix
\section{Transformation of parameters and deformation of the cycle}
\label{appendix:A}
In this appendix, we explain how the integral \eqref{eq:KBCe} can be derived from \eqref{eq:KBC}; 
the integral \eqref{eq:KBCe-2} can  be obtained in exactly the same way. 
We fix the complex number $\varepsilon$ satisfying $|\varepsilon|=\epsilon$, where $0<\epsilon<1$. 
We first move the parameters $a_{1}$, $a_{2}$ and $t$ from the range 
$|a_{i}|<1$ to 
$|a_{i}|<\epsilon$ $(i=1,2)$ and $|t|<1$ to 
$|t|<\epsilon$, i.e., we impose that
\begin{equation}
\label{eq:condition-A}
|a_{1}|<\epsilon,\  |a_{2}|<\epsilon,\  
|a_{3}|<1,\ |a_{4}|<1,\ |a_{5}|<1,\  |a_{6}|<1
\quad
\mbox{and}\quad
|t|<\epsilon,
\end{equation}
under the balancing condition $q=a_{1}a_{2}a_{3}a_{4}a_{5}a_{6}t^{2n-2}$. 
After we reset the parameters in \eqref{eq:KBC} as 
$$a_{1}\to a_{1}\varepsilon,\ 
a_{2}\to a_{2}\varepsilon,\ 
a_{3}\to b_{1}\varepsilon^{-1},\ 
a_{4}\to b_{2}\varepsilon^{-1}, \
a_{5}\to b_{3}\varepsilon^{-1}, \
a_{6}\to b_{4}\varepsilon^{-1}\
\mbox{and}\ 
v\to v \varepsilon^{-1},$$ 
the range \eqref{eq:condition-A} is rewritten as 
$$
|a_{1}\varepsilon|<\epsilon,\  |a_{2}\varepsilon|<\epsilon,\  
|b_{1}\varepsilon^{-1}|<1,\ |b_{2}\varepsilon^{-1}|<1,\ |b_{3}\varepsilon^{-1}|<1,\  |b_{4}\varepsilon^{-1}|<1\quad
\mbox{and}\quad 
|t|<\epsilon.
$$
The condition for the parameters 
then becomes
\begin{equation}
\label{eq:condition-A2}
|a_{1}|<1,\  |a_{2}|<1,\  
|b_{1}|<\epsilon,\ |b_{2}|<\epsilon,\ |b_{3}|<\epsilon,\  |b_{4}|<\epsilon
\quad
\mbox{and}\quad 
|t|<\epsilon,
\end{equation}
under the balancing condition $q\varepsilon^{2}=a_{1}a_{2}b_{1}b_{2}b_{3}b_{4}t^{2n-2}$. 
Moreover, we apply the transformation of the integral variables as $z_{i}\to z_{i}\varepsilon$ $(i=1,\ldots, n)$
to the integral \eqref{eq:KBC}. 
Under the balancing condition 
$q\varepsilon^{2}=a_{1}a_{2}b_{1}b_{2}b_{3}b_{4}t^{2n-2}$, 
for $c\in \mathbb{R}$ satisfying $1\le c\le \epsilon^{-1}$, we
introduce the $c$-dependent integral  
\begin{equation}
\label{eq:KBCe-A}
\begin{split}
&\frac{1}{n!}\int_{\mathbb{T}_{c}^{n}}
\prod_{i=1}^{n}
\bigg[
(1-\varepsilon^{2} z_{i}^{2})
\frac{\theta(t^{n-1}a_{1}a_{2}vz_{i}^{-1}, vz_{i};q)}{\prod_{j=1}^{2}(a_j\varepsilon^{2} z_{i},a_jz_{i}^{-1};q)_{\infty}}
\prod_{k=1}^{4}\frac{(qb_{k}^{-1}\varepsilon^{2} z_{i};q)_{\infty}}{(b_{k}z_{i};q)_{\infty}}
\bigg]
\\&\qquad\quad
\times\prod_{1\le j< k\le n}
\!\!\!(1-\varepsilon^{2} z_{j}z_{k})
\frac{(z_{j}/z_{k},z_{k}/z_{j};q)_{\infty}}
{(tz_{j}/z_{k},tz_{k}/z_{j};q)_{\infty}}
\frac{(qt^{-1}\varepsilon^{2} z_{j}z_{k};q)_{\infty}}
{(t\varepsilon^{2} z_{j}z_{k};q)_{\infty}}
\,\varpi(z),
\end{split}
\end{equation}
where
\begin{equation*}
\mathbb{T}_{c}^n:=\{(z_1,\ldots,z_n)\in \mathbb{C}^n\,|\, |z_i|=c\ (i=1,\ldots,n)\}.
\end{equation*}
Then the integral \eqref{eq:KBC} under the condition \eqref{eq:condition-A} is rewritten as 
the integral \eqref{eq:KBCe-A} under the condition \eqref{eq:condition-A2} for the case $c=\epsilon^{-1}$. 
Our claim is now that the integral \eqref{eq:KBCe-A} as a function of $c$ does not change 
even if $c$ is moved continuously from $\epsilon^{-1}$ to $1$.
\par
Since the integrand of \eqref{eq:KBCe-A} is symmetric for $z_{1},\ldots, z_{n}$, 
without loss of generality we regard the integrand of \eqref{eq:KBCe-A} as a function of $z_{1}$ 
under $|z_{2}|=|z_{3}|=\cdots=|z_{n}|=c$, 
and it suffices to investigate the situation of the poles of the integrand of \eqref{eq:KBCe-A} in the $z_{1}$-plane. 
\begin{figure}[htbp]
\begin{center}
\vspace{-5pt}
\includegraphics[width=250pt]{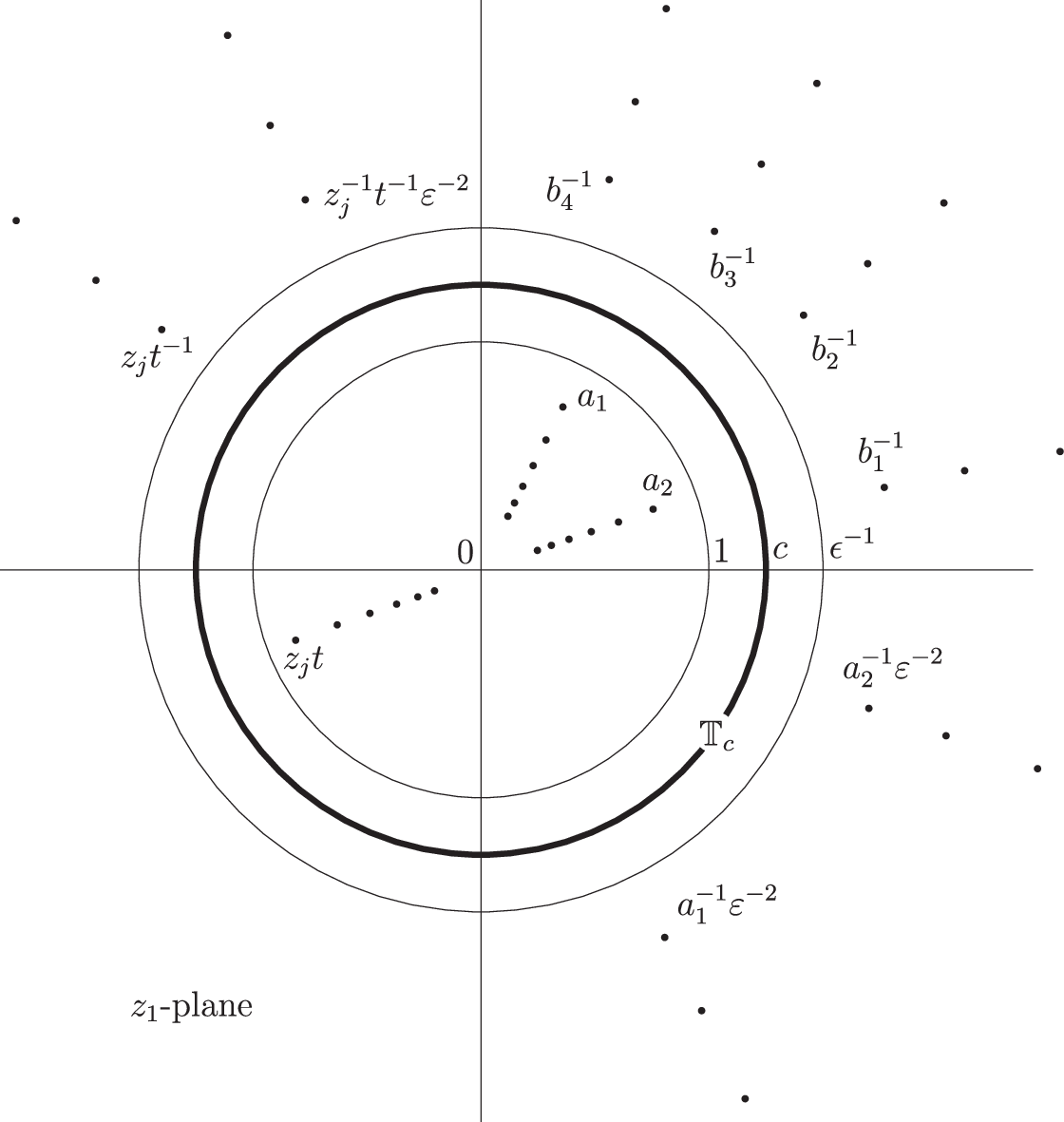}
\vspace{-5pt}
\caption{Poles in $z_{1}$-plane}
\end{center}
\vspace{-15pt}
\end{figure}
%
A listing of the poles in the $z_{1}$-plane is 
\begin{equation*}
\begin{split}
z_{1}&=a_{i}q^{\nu},\ a_{i}^{-1}\varepsilon^{-2}q^{-\nu}\ (i=1,2;\, \nu=0,1,2,\ldots),
\\
z_{1}&=b_{i}^{-1}q^{-\nu}\ (i=1,2,3,4;\, \nu=0,1,2,\ldots),
\\
z_{1}&=z_{j}tq^{\nu},\ z_{j}t^{-1}q^{-\nu},\ z_{j}^{-1}t^{-1}\varepsilon^{-2}q^{-\nu}\ 
(j=2,\ldots,n;\, \nu=0,1,2,\ldots),
\end{split}
\end{equation*}
where $|z_{2}|=|z_{3}|=\cdots=|z_{n}|=c$. Since we can confirm that 
\begin{equation*}
\begin{split}
&|a_{i}q^{\nu}|<1,\ |a_{i}^{-1}\varepsilon^{-2}q^{-\nu}|>\epsilon^{-2}\ (i=1,2;\, \nu=0,1,2,\ldots),
\\
&|b_{i}^{-1}q^{-\nu}|>\epsilon^{-1}\ (i=1,2,3,4;\, \nu=0,1,2,\ldots),
\\
&
|z_{j}tq^{\nu}|<c\epsilon<1,\ |z_{j}t^{-1}q^{-\nu}|>c\epsilon^{-1}>\epsilon^{-1},\ 
|z_{j}^{-1}t^{-1}\varepsilon^{-2}q^{-\nu}|>c^{-1}\epsilon^{-3}> \epsilon^{-2}\\ 
&\hspace{220pt}(j=2,\ldots,n;\, \nu=0,1,2,\ldots)\\[-5pt]
\end{split}
\end{equation*}
under the condition \eqref{eq:condition-A2}, the poles 
$$
z_{1}=a_{1}q^{\nu},\ a_{2}q^{\nu},\ z_{j}tq^{\nu}\ (j=2,\ldots,n;\,\nu=0,1,2,\ldots)
$$
are located inside the circle $|z_{1}|=1$, and 
the other poles 
are located outside the circle $|z_{1}|=\epsilon^{-1}$ (see Figure 1). 
Since 
the integrand of \eqref{eq:KBCe-A} 
as a function of $z_{1}$ has no poles in the annulus $1\le |z_{1}|\le \epsilon^{-1}$, 
by Cauchy's integral theorem the integral \eqref{eq:KBCe-A} under the condition \eqref{eq:condition-A2} for $c=\epsilon^{-1}$ coincides with that for $c=1$. 
Once we obtain the integral \eqref{eq:KBCe-A} under the condition \eqref{eq:condition-A2} for $c=1$, 
we can move 
the parameters $b_{j}$ $(j=1,\ldots,4)$ and $t$ 
from the range $|b_{j}| < \epsilon$ to $|b_{j}| <1$ and $|t| < \epsilon$ to $|t| <1$. 
Therefore we eventually obtain the formula \eqref{eq:KBCe} for the parameters 
satisfying  
$|a_i|<1$ $(i=1,2)$, $|b_j|<1$ $(j=1,\ldots,4)$ and $|t|<1$, 
under the balancing condition $q\varepsilon^{2}=a_{1}a_{2}b_{1}b_{2}b_{3}b_{4}t^{2n-2}$.

\section*{Acknowledgements} 
This work is supported by 
the Australian Research Council Discovery Project grant DP250102552 and 
JSPS Kakenhi Grant Number  (C)23K03153. 

\bibliographystyle{amsplain}

{\footnotesize

}

\end{document}